\newcommand{\OO}{\mathscr{O}}
\newcommand{\II}{\mathscr{I}}
\newcommand{\MM}{\mathcal{M}}
\newcommand{\Cfield}{\mathbb{C}}
\newcommand{\Spec}{\textnormal{Spec }}
\newcommand{\image}{\textnormal{im}\,}
\newcommand{\HHom}{\mathscr{H}om} 
\newcommand{\Hom}{\textnormal{Hom}}
\newcommand{\dimension}{\textnormal{dim}\,}
\newcommand{\Pic}{\textnormal{Pic}}
\newcommand{\supp}{\textnormal{supp}}
\newcommand{\Ext}{\textnormal{Ext}}
\newcommand{\EExt}{\mathscr{E}xt}
\newcommand{\al}{\alpha}
\newcommand{\Coh}{\textnormal{Coh}}
\newcommand{\Ap}{\mathcal{A}^p}
\newcommand{\arinj}{\ar@{^{(}->}}
\newcommand{\arsurj}{\ar@{->>}}
\newtheorem{theorem}{Theorem}[section]
\newtheorem{lemma}[theorem]{Lemma}
\newtheorem{pro}[theorem]{Proposition}
\newtheorem{coro}[theorem]{Corollary}
\theoremstyle{definition}
\theoremstyle{remark}
\newtheorem{remark}[theorem]{Remark}
\numberwithin{equation}{section}
\begin{document}

\title{Stability and Fourier-Mukai transforms on elliptic fibrations}

\author[Jason Lo]{Jason Lo}
\address{Department of Mathematics, University of Missouri,
Columbia, MO 65211, USA} \email{jccl@alumni.stanford.edu}

\keywords{elliptic fibrations, stability, moduli, Fourier-Mukai transforms.}
\subjclass[2010]{Primary 14J60; Secondary: 14J27, 14J30}

\maketitle

\begin{abstract}
We systematically develop Bridgeland's \cite{FMTes} and Bridgeland-Maciocia's \cite{BMef} techniques for studying elliptic fibrations, and identify criteria that ensure 2-term complexes are mapped to torsion-free sheaves under a Fourier-Mukai transform.  As an application, we construct an open immersion from a moduli  of stable complexes to a moduli of Gieseker stable sheaves on elliptic threefolds.  As another application, we give various 1-1 correspondences between fiberwise semistable torsion-free sheaves and codimension-1 sheaves on Weierstrass surfaces.
\end{abstract}


\section{Introduction}

Fourier-Mukai transforms have been used extensively to understand stable sheaves and their moduli.  We mention only a few works below, and refer to \cite{FMNT} for a more comprehensive survey on this subject.

One important problem on Calabi-Yau threefolds is the construction of stable sheaves.  In \cite{FMW}, Friedman-Morgan-Witten developed a technique for constructing stable sheaves on an elliptic fibration $X$, using the notion of spectral covers.  In their method, there is a 1-1 correspondence, via a Fourier-Mukai transform, between the stable sheaves on $X$ and line bundles supported on lower-dimensional subvarieties (namely, the spectral covers) of the Fourier-Mukai partner $\hat{X}$.  This allows us to translate moduli problems for sheaves on $X$ to those on a lower-dimensional variety, for which we have more tools at our disposal.  This aspect of Fourier-Mukai transforms is especially relevant to the conjectural duality between F-theory and heterotic strings (see \cite{CDFMR, HRP}, for instance).

On a broader scale, Fourier-Mukai transforms can be used to describe various moduli problems on a variety $X$ in terms of moduli problems on its Fourier-Mukai partner $\hat{X}$.  For instance, Bruzzo-Maciocia \cite{BM} showed that if $X$ is a reflexive K3 surface, then Hilbert schemes of points on $X$ are isomorphic to moduli spaces of  stable locally free sheaves on $\hat{X}$, with the isomorphism given by a Fourier-Mukai transform.  And in \cite{FMTes}, Bridgeland showed that if $X$ is a relatively minimal elliptic surface, then Hilbert schemes of points on $\hat{X}$ are birationally equivalent to moduli of stable torsion-free sheaves on $X$.  If $X$ is an elliptic threefold, then Bridgeland-Maciocia \cite{BMef} showed that any connected component of a complete moduli of rank-one torsion-free sheaves is isomorphic to a component of the moduli of stable torsion-free sheaves on $\hat{X}$.

Since Bridgeland's work on stability conditions on triangulated categories \cite{StabTC,SCK3} appeared, there has been a lot of focus on stable objects in the bounded derived category of coherent sheaves $D(X)$ of a variety $X$ - which are chain complexes of coherent sheaves on $X$ - and their moduli spaces.  These moduli spaces and the associated counting invariants have rich connections with mirror symmetry.  And now, using Fourier-Mukai transforms, we can translate moduli problems for complexes on $X$ to moduli problems for sheaves on $\hat{X}$, the latter being better understood.  Recent works along this line include: Bernardara-Hein \cite{BH} and Hein-Ploog \cite{HP} for  elliptic K3 surfaces, Maciocia-Meachan \cite{MM} for rank-one Bridgeland stable complexes on Abelian surfaces,  Minamide-Yanagida-Yoshioka \cite{MYY,MYY2} for Bridgeland stable complexes on Abelian and K3 surfaces, and the author \cite{Lo4} for K3 surfaces.

By using Fourier-Mukai transforms to construct open immersions or isomorphisms from moduli of complexes to moduli of sheaves, we can use existing results on moduli of sheaves to better understand moduli of complexes, such as computing their counting invariants,  showing that they are fine moduli spaces, or showing they are birationally equivalent to other moduli spaces.

\subsection{Overview of results}

In this paper, we systematically develop the ideas originally found in Bridgeland's \cite{FMTes} and Bridgeland-Maciocia's \cite{BMef} papers on  elliptic surfaces and elliptic threefolds.  Given an elliptic surface or elliptic threefold $X$, the idea is to use three different torsion pairs $(\mathcal T_X, \mathcal F_X), (W_{0,X},W_{1,X})$ and $(\mathcal B_X, \mathcal B_X^\circ)$ (see Section \ref{section-notation} for their definitions) to break up the category $\Coh (X)$ into various subcategories, and understand how each category changes under the Fourier-Mukai transform from $X$; this is done in Section \ref{section-equiv}.  Our first key technical result is Theorem \ref{pro5}, which roughly says, that given a WIT$_1$ torsion-free sheaf $F$ on $X$ that restricts to a stable sheaf on the generic fibre, it is taken to a torsion-free sheaf if and only if it satisfies the vanishing condition
\[
  \Ext^1_{D(X)} (\mathcal B_X \cap W_{0,X},F)=0.
\]
Applying this criterion on elliptic threefolds, we construct an open immersion from a moduli stack of polynomial stable complexes $X$ to a moduli stack of stable sheaves on $\hat{X}$ in Theorem \ref{theorem-main2}.  Since the moduli of stable sheaves admits a tame moduli space in the sense of Alper \cite{Alper},  we obtain an example of a moduli of  complexes that  also admits a tame moduli space.

We also point out in Remark \ref{remark2} that, when $X$ is an elliptic surface, the birational equivalence from a moduli of sheaves on $X$ to $\text{Pic}^\circ (\hat{X}) \times \text{Hilb}^t(\hat{X})$ constructed in \cite[Theorem 1.1]{FMTes} restricts to an isomorphism precisely at the locus defined by the vanishing condition above.  Besides, all the sheaves parametrised by this locus are locally free.

By considering a category slightly larger than the image of the open immersion in Theorem \ref{theorem-main2}, we obtain an equivalence of categories on elliptic threefolds in Theorem \ref{theorem-main3}, between a category $\mathcal C_X$ of 2-term complexes on $X$ and a category of torsion-free sheaves $\mathcal C_{\hat{X}}$ on $\hat{X}$.  This equivalence not only extends the aforementioned open immersion, but also extends the isomorphism between a moduli of rank-one torsion-free sheaves and a moduli of stable torsion-free sheaves in \cite[Theorem 1.4]{BMef}.  

Finally, in Section \ref{section-app2}, we consider torsion-free sheaves on $X$ that are taken to sheaves supported in codimension-1 on $\hat{X}$.  On elliptic surfaces that are Weierstrass, we obtain an equivalence of categories between fiberwise locally free sheaves of degree 0 on $X$ and pure 1-dimensional sheaves flat over the base (Proposition \ref{pro4}).  As a special case, we have a 1-1 correspondence between line bundles of fibre degree 0 on a Weierstrass surface $X \to S$, and line bundles supported on sections of the dual fibration $\hat{X} \to S$ (Corollary \ref{coro2}).  These results resemble some of the results obtained using Friedman-Morgan-Witten's spectral construction, such as in \cite{HRP}, but do not make use of Fitting ideals.  It would be interesting to understand the precise connections between our results in Section \ref{section-app2} and those obtained using the spectral construction.

\subsection{Notation}
For any noetherian scheme $X$, we let $\Coh (X)$ denote the category of coherent sheaves on $X$, and $D(X)$ denote the bounded derived category of coherent sheaves on $X$.  For any $E \in D(X)$, we write $H^i(E)$ to denote the cohomology of $E$ at degree $i$.  If the dimension of $X$ is $n$ and $0\leq d \leq n$ is an integer, we write $\Coh_{\leq d}(X)$ to denote the subcategory of $\Coh (X)$ consisting of sheaves supported in dimension at most $d$, and write $\Coh_{\geq d}(X)$ to denote the subcategory of $\Coh (X)$ consisting of sheaves without subsheaves in $\Coh_{\leq d-1}$.

If $(\mathcal T, \mathcal F)$ is a torsion pair in $\Coh (X)$, we write $\langle \mathcal T, \mathcal F [1]\rangle$ to denote the extension-closed subcategory of $D(X)$ generated by $\mathcal T$ and $\mathcal F[1]$.  That is, elements $E$ in $\langle \mathcal T, \mathcal F [1]\rangle$ are exactly the complexes in $D(X)$ such that $H^0 (E) \in \mathcal T, H^{-1} (E) \in \mathcal F$ and $H^i (E)=0$ for any $i \neq -1, 0$.

Given varieties $X$ and $Y$, a functor $D(X) \to D(Y)$ of the form $$\Psi (-) := R{\pi_Y}_\ast (P \overset{L}{\otimes} \pi_X^\ast  (-) )$$ for some $P \in D(X \times Y)$ is called an integral functor.  Here, $\pi_X, \pi_Y$ denote the projections from $X \times Y$ onto $X,Y$, respectively.  We will use the term `Fourier-Mukai transform' only for integral functors that induce equivalences of categories.

\subsection{A review of polynomial stability conditions}

The reader may refer to Bayer's article \cite{BayerPBSC} for a complete explanation of polynomial stability conditions.  Here, we include only a brief summary.

Suppose $X$ is a smooth projective threefold.  A polynomial stability on $D(X)$ in the sense of Bayer is the data $\sigma = (\omega, \rho, p, U)$ where  $\omega$ is a fixed ample $\mathbb{R}$-divisor on $X$, whereas
\[
\rho = (\rho_0, \rho_1, \rho_2, \rho_3) \in (\Cfield^\ast)^{4}
 \]
 is a quadruple of nonzero complex numbers such that each $\rho_d/\rho_{d+1}$ lies in the upper half complex plane.  Also, $p$ is a perversity function associated to $\rho$, i.e.\ $p$ is a function $\{0,1,2,3\} \to \mathbb{Z}$ such that $(-1)^{p(d)}\rho_d$ lies in the upper half plane for each $d$.  The last part, $U$, of the data $\sigma$ is a unipotent operator (i.e.\ an element of $A^\ast (X)_\Cfield$ of the form $U=1+N$, where $N$ is concentrated in positive degrees).  The perversity function $p$ determines a t-structure on $D^b(X)$ with heart $\Ap$.  Once the data $\sigma$ is given, the group homomorphism (also called the `central charge')
 \begin{align*}
   Z_\sigma :  K(D^b(X)) &\to \Cfield [m] \\
     E &\mapsto Z_\sigma(E)(m) := \int_X \sum_{d=0}^3 \rho_d \omega^d m^d ch(E) \cdot U
 \end{align*}
 has the property that $Z_\sigma(E)(m)$ lies in the upper half plane for any $0 \neq E \in \Ap$ and real number $m \gg 0$.

 For $0\neq E \in \Ap$, if we write $Z_\sigma(E)(m) \in \mathbb{R}_{>0} \cdot e^{i \pi \phi (E) (m)}$ for some real number $\phi (E)(m)$ for $m \gg 0$, then we have $\phi (E)(m) \in (0, 1]$ for $m \gg 0$.  We say that $E$ is $\sigma$-semistable if, for all subobjects $0 \neq F \subsetneq E$ in $\Ap$, we have $\phi (F)(m) \leq \phi (E) (m)$ for all $m \gg 0$ (which we write $\phi (F) \preceq \phi (E)$ to denote);  and we say $E$ is $\sigma$-stable if $\phi (F) (m) < \phi (E)(m)$ for all $m \gg 0$ (which we write $\phi (F) \prec \phi (E)$ to denote).

{\bf Acknowledgments}:
The author would like to thank Zhenbo Qin for many enlightening discussions, and assistance with a key step in this project.  He would also like to thank Arend Bayer, Emanuel Diaconescu, Jun Li and Ziyu Zhang for helpful discussions, and Andrei ~C\u{a}ld\u{a}raru for answering his questions on elliptic threefolds.  Finally, he would like to thank the referees for their many careful comments, which have helped  improve the writing of the paper to a great extent.

\section{Complexes and Fourier-Mukai transforms}

The goal of this section is to find sufficient conditions for stable 2-term complexes to be mapped to  stable torsion-free sheaves by the Fourier-Mukai transform constructed in \cite{BMef}.

Given a Fourier-Mukai transform $\Psi : D(X) \to D(Y)$ between two derived categories, recall that a complex $E \in D(X)$ is called $\Psi$-WIT$_i$ if $\Psi (E)$ is isomorphic to an object in $\Coh (Y) [-i]$, i.e.\ the cohomology of $\Psi (E)$ vanishes at every degree $j$ that is not equal to $i$.  If $E$ is a sheaf on $X$, we consider $E$ as a complex concentrated in degree 0, and speak of $E$ being $\Psi$-WIT$_i$ in the above sense.  For a $\Psi$-WIT$_i$ complex $E \in D(X)$, we write $\hat{E}$ to denote the sheaf on $Y$, unique up to isomorphism, such that $\Psi (E)\cong \hat{E} [-i]$, and call $\hat{E}$ the transform of $E$.

\subsection{Outline of strategy}\label{section-BMefsum}

Suppose $\pi : X \to S$ and $\hat{\pi} : Y \to S$ are two elliptic threefolds over a surface $S$, and $\Psi : D(X) \to D(Y)$ is a relative Fourier-Mukai transform over $S$ in \cite{BMef}.  Before we explain our strategy for mapping stable 2-term complexes on $X$ to stable sheaves on $Y$, we review Bridgeland-Maciocia's approach in \cite{BMef} for mapping rank-one torsion-free sheaves on $X$ to stable sheaves on $Y$:
\begin{itemize}
\item[Step 1.] Given any rank-one torsion-free sheaf $F$ on $X$,  first twist $F$ by a high enough power of an ample line bundle $L$.  If $n \gg 0$, then $F \otimes L^{\otimes n}$ is $\Psi$-WIT$_0$ \cite[Corollary 8.5]{BMef}.  Note that the operation $F \mapsto F \otimes L^{\otimes n}$ does not alter the moduli space of rank-one torsion-free sheaves on $X$.
\item[Step 2.] Show that, for any $\Psi$-WIT$_0$ torsion-free sheaf $F$ on $X$, the transform $\Psi (F)$ is also a torsion-free sheaf \cite[Lemma 9.4]{BMef}.
\item[Step 3.] Show that, if a $\Psi$-WIT$_0$ torsion-free sheaf $F$ on $X$ restricts to a stable sheaf on the generic fibre of $\pi$, then the transform $\hat{F}$ restricts to a stable sheaf on the generic fibre of $\hat{\pi}$ \cite[Lemma 9.5]{BMef}.
\item[Step 4.] Show that, if a torsion-free sheaf $G$ on $Y$ restricts to a stable sheaf on the generic fibre of $\hat{\pi}$, then $G$ is stable with respect to a suitable polarisation on $Y$ \cite[Lemma 2.1]{BMef}.
\end{itemize}
Since any rank-one torsion-free sheaf on $X$ restricts to a stable sheaf on the generic fibre of $\pi$, and $\Psi$ preserves families of $\Psi$-WIT$_i$ sheaves, Steps 1 through 4 above imply that the any connected component $\mathcal N$ of the moduli of rank-one torsion-free sheaves on $X$ can be embedded into a connected component $\mathcal M$ of the moduli of stable sheaves on $Y$; if $\mathcal N$ is complete, then it is mapped isomorphically onto $\mathcal M$ \cite[Theorem 1.4]{BMef}.

Let $X, Y$ and $\Psi$ be as above.  Consider 2-term complexes $E \in D(X)$ concentrated in degrees 0 and $-1$ of the following form:
\begin{itemize}
\item $H^{-1}(E)$ is a torsion-free sheaf;
\item $H^0(E)$ is a sheaf supported in dimension 0.
\end{itemize}
We consider these complexes partly because moduli spaces of complexes of this form on threefolds have been constructed in \cite{Lo2, Lo3}.  For the Fourier-Mukai transform $\Psi$, 0-dimensional sheaves on $X$ are always $\Psi$-WIT$_0$.  Since the image of any coherent sheaf on $X$ under $\Psi$ is a complex with nonzero cohomology only perhaps at degrees 0 and 1, from the canonical exact triangle
\begin{equation*}
  H^{-1}(E)[1] \to E \to H^0(E) \to H^{-1}(E)[2],
\end{equation*}
we see that the transform $\Psi (E)$ of $E$ is isomorphic to a sheaf if and only if $H^{-1}(E)$ is $\Psi$-WIT$_1$.  When this is the case, $E$ is $\Psi$-WIT$_0$ and the exact triangle above is taken to the short exact sequence of coherent sheaves
\begin{equation*}
  0 \to \widehat{H^{-1}(E)} \to \hat{E} \to \widehat{H^0(E)} \to 0.
\end{equation*}
Once we know that $E$ is $\Psi$-WIT$_0$, if we want the transform $\hat{E}$ to be a stable sheaf, we must ensure that it is torsion-free to begin with.  In particular, this requires the transform $\widehat{H^{-1}(E)}$ to be a  torsion-free sheaf.  We therefore need to find a criterion that ensures the transform of a WIT$_1$ torsion-free sheaf is again torsion-free.  This is where  the case of complexes  has to depart from the case of sheaves (Step 2 above).  Finding such a criterion  will be the main goal of Section \ref{section-equiv}.

\subsection{Elliptic curves}\label{section-ellipcurv}

Many properties of Fourier-Mukai transforms on elliptic fibrations are similar to properties of Fourier-Mukai transforms on elliptic curves, which are well-understood - see \cite[Section 3.5.1]{FMNT}, for instance.

Suppose $X$ is an elliptic curve, $\hat{X}=\text{Pic}^0(X)$ is the dual variety, and $\mathcal P$ is the Poincar\'{e} line bundle on $X \times \hat{X}$.  Let $\Psi$ denote the Fourier-Mukai transform $D^b(X) \to D^b(\hat{X})$ with kernel $\mathcal P$.  For any $\beta \in \mathbb R$, we can define the following full subcategories of $\Coh (X)$:
\begin{itemize}
\item $\mathcal T_X (\beta)$ is the category of coherent sheaves $E$ where all the Harder-Narasimhan (HN) factors are either torsion, or have slopes $\mu > \beta$.
\item $\mathcal F_X (\beta)$ is the category of coherent sheaves $E$ where all the HN factors are torsion-free, and have slopes $\mu \leq \beta$.
\end{itemize}
Then $(\mathcal T_X (\beta), \mathcal F_X (\beta))$ is a torsion pair in $\Coh (X)$.  Let $\mathcal A_X (\beta)$ denote the heart obtained by tilting $\Coh (X)$ with respect to this torsion pair.  That is, $\mathcal A_X (\beta) = \langle \mathcal F_X(\beta ) [1], \mathcal T_X (\beta)\rangle$.  We have the following lemma, which seems well-known (see \cite{BayerTour}, for instance), but whose proof we include here for ease of reference:

\begin{lemma}\label{lemma4}
The Fourier-Mukai transform $\Psi : D^b(X) \to D^b(\hat{X})$ induces an equivalence of Abelian categories $\mathcal A_X (0) \to \Coh (\hat{X})$.
\end{lemma}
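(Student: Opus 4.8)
The plan is to first check that $\Psi$ sends the tilted heart $\mathcal A_X(0)$ into $\Coh(\hat X)$, and then to promote this containment to an equivalence using a general fact about hearts of bounded t-structures.

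The input I would use is the standard description of the WIT indices on an elliptic curve (see \cite[Section 3.5.1]{FMNT} and \cite{BayerTour}): every torsion sheaf is WIT$_0$; every semistable torsion-free sheaf of slope $\mu>0$ is WIT$_0$; and every semistable torsion-free sheaf of slope $\mu\le 0$ is WIT$_1$ (its transform being a torsion sheaf when $\mu=0$). Both the WIT$_0$ and the WIT$_1$ conditions are closed under extensions: applying $\Psi$ to a short exact sequence and taking the long exact cohomology sequence, together with the fact that the transform of any sheaf on a curve is concentrated in degrees $0$ and $1$, forces the middle term to have its transform concentrated in the same single degree as the outer terms. Since every object of $\mathcal T_X(0)$ (resp.\ $\mathcal F_X(0)$) is an iterated extension of its Harder--Narasimhan factors, which are torsion or of positive slope (resp.\ torsion-free of slope $\le 0$), I conclude $\mathcal T_X(0)\subseteq \mathrm{WIT}_0$ and $\mathcal F_X(0)\subseteq \mathrm{WIT}_1$.

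Next, for $E\in\mathcal A_X(0)$ I would use the canonical triangle $H^{-1}(E)[1]\to E\to H^0(E)\to H^{-1}(E)[2]$, in which $H^{-1}(E)\in\mathcal F_X(0)$ is WIT$_1$ and $H^0(E)\in\mathcal T_X(0)$ is WIT$_0$. Applying $\Psi$ gives $\Psi(H^{-1}(E)[1])=\widehat{H^{-1}(E)}$ and $\Psi(H^0(E))=\widehat{H^0(E)}$, both sheaves sitting in degree $0$; the long exact cohomology sequence then shows $\Psi(E)$ is concentrated in degree $0$ and sits in
\begin{equation*}
0\to \widehat{H^{-1}(E)}\to \Psi(E)\to \widehat{H^0(E)}\to 0 .
\end{equation*}
Hence $\Psi(\mathcal A_X(0))\subseteq\Coh(\hat X)$.

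Finally, since $\Psi$ is a triangulated equivalence and $\mathcal A_X(0)$ is the heart of a bounded t-structure (obtained by tilting), its image $\Psi(\mathcal A_X(0))$ is again the heart of a bounded t-structure on $D^b(\hat X)$. I would then invoke the general lemma that if one heart of a bounded t-structure is contained in another, the two coincide; applied to $\Psi(\mathcal A_X(0))\subseteq\Coh(\hat X)$ this yields equality, hence essential surjectivity. Full faithfulness is inherited from the derived equivalence, because hearts are full subcategories on which morphism groups agree with those computed in the derived category. The main obstacle is precisely this last step, turning the containment into an equality. I would supply the heart-containment lemma by taking $G\in\Coh(\hat X)$, considering its $\Psi(\mathcal A_X(0))$-cohomology, and observing that the top and bottom truncation maps lie in $\Hom$ groups of the form $\Hom(\Coh(\hat X),\Coh(\hat X)[k])$ with $k\neq 0$, which vanish; this forces $G$ to be concentrated in degree $0$ for the tilted t-structure, i.e.\ $G\in\Psi(\mathcal A_X(0))$. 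Alternatively one could establish essential surjectivity directly, decomposing $G$ by its torsion and Harder--Narasimhan structure on $\hat X$ and running the WIT dictionary through the inverse transform, but the heart-containment lemma is the cleaner route.
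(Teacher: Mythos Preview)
Your proof is correct and follows essentially the same approach as the paper: both establish $\Psi(\mathcal A_X(0))\subseteq\Coh(\hat X)$ by reducing to (semi)stable building blocks via the same WIT dictionary from \cite[Corollary 3.29]{FMNT}, and then invoke the heart-containment lemma to upgrade the inclusion to an equality. The only organizational difference is that the paper appeals directly to extension-closure of $\Coh(\hat X)$ to reduce to stable sheaves, whereas you first show $\mathcal T_X(0)$ and $\mathcal F_X(0)$ are WIT$_0$ and WIT$_1$ via their HN filtrations and then assemble the two pieces through the canonical triangle; this is a cosmetic variation rather than a different route.
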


\begin{proof}
Recall the well-known result, that if $\mathcal H_1, \mathcal H_2$ are two hearts of bounded t-structures of the derived category of an Abelian category with $\mathcal H_1 \subseteq \mathcal H_2$, then necessarily $\mathcal H_1 = \mathcal H_2$.  Therefore, it suffices for us to show that $\Psi$ takes $\mathcal A_X(0)$ into $\Coh (\hat{X})$.  Moreover, since the category $\Coh (\hat{X})$ is extension-closed, it suffices to show that, for any slope stable coherent sheaf $F$ on $X$, we have either $\Psi (F) \in \Coh (\hat{X})$ (if $F \in \mathcal A_X(0)$) or $\Psi (F[1]) \in \Coh (\hat{X})$ (if $F[1] \in \mathcal A_X (0)$).

Suppose $F$ is a slope stable coherent sheaf $F$ on $X$, and that $F \in \mathcal A_X (0)$.  Then $F$ has strictly positive slope, and is $\Psi$-WIT$_0$ by \cite[Corollary 3.29]{FMNT}; hence $\Psi (F) \in \Coh (\hat{X})$.  Now, suppose $F [1] \in \mathcal A_X (0)$ instead.  Then $F$ has non-positive slope, and is $\Psi$-WIT$_1$ by \cite[Corollary 3.29]{FMNT} again; therefore, $\Psi (F[1]) \in \Coh (\hat{X})$, and we are done.
\end{proof}

\subsection{More notation and preliminaries}\label{section-notation}

By an elliptic fibration, we  mean a flat morphism of smooth projective varieties $\pi : X \to S$ where the generic fibre is a smooth genus one curve, such that $K_X \cdot C =0$ for any curve $C$ contained in a fibre of $\pi$.  (In \cite[Definition 1.1]{BMef}, $\pi$ is not necessarily assumed to be flat or projective.)  When $X$ is a threefold and $S$ is a surface, we refer to $X$ or $\pi$ as an elliptic threefold; when $X$ is a surface and $S$ a curve, we refer to $X$ or $\pi$ as an elliptic surface.

In the rest of Section \ref{section-notation}, we give a brief summary of the results on elliptic fibrations due to Bridgeland \cite{FMTes} and Bridgeland-Maciocia \cite{BMef}.  To be consistent with the notation in \cite{BMef}, given an elliptic fibration $\pi : X \to S$, we write $f$ to denote the class of any fibre of $\pi$ in the Chow ring of $X$, i.e.\ the `fibre class' of $\pi$.  Then for any object $E \in D(X)$, we define the fibre degree of $E$ to be
\[
d(E) = c_1(E)\cdot f,
 \]
 which is the degree of the restriction of $E$ to the generic fibre of $\pi$.  For the rest of this article, for any coherent sheaf $E$, we write $r(E)$ to denote its rank, and when $r(E)>0$, we  define
 \[
 \mu (E) = d(E)/r(E),
 \]
 which is the  slope of the restriction of  $E$ to the generic fibre.   Let $\lambda_{X/S}$ denote the greatest common divisor of the fibre degrees of all objects in $D(X)$.

From \cite[Theorem 5.3]{FMTes} and \cite[Theorem 9.1]{BMef}, we know that given an elliptic threefold (resp.\ elliptic surface) $\pi : X \to S$ and  any element
 \[
  \begin{pmatrix} c & a \\ d & b \end{pmatrix} \in \text{SL}_2(\mathbb{Z})
 \]
 where $a>0$ and $\lambda_{X/S} |d$,  there is another elliptic threefold (resp.\ elliptic surface) $\hat{\pi} : Y \to S$ that is a relative moduli of sheaves on $X$, where  given any point $s\in S$, the fibre $Y_s$ is the moduli of stable sheaves of rank $a$ and degree $b$ on $X_s$ (e.g.\ see \cite[Section 4]{FMTes}, \cite[Section 2.1]{BMef} or \cite[Section 6.3]{FMNT}).  In fact, the situation is symmetric in $X$ and $Y$, in the sense that $X$ is also a relative moduli of sheaves on $Y$, where given any point $s \in S$, the fibre $X_s$ is the moduli of stable sheaves of rank $a$ and degree $c$ on $Y_s$ (see \cite[Lemma 5.2]{FMTes} or \cite[Proposition 8.7]{BMef}).

If $\pi : X \to S$ is an elliptic threefold or surface, and $Y$ is as above, then the pushforward of the universal sheaf $\mathcal P$ on $X \times_S Y$ to $X \times Y$ acts as  the kernel of  a Fourier-Mukai transform $\Phi : D(Y) \to D(X)$.  If we let $\mathcal Q = \mathcal P^\vee \otimes \pi_X^\ast \omega_X [n-1]$, where $n$ is the dimension of $X$ and $Y$, then $\mathcal Q$ is the kernel of another Fourier-Mukai transform $\Psi : D(X) \to D(Y)$, and $\Phi$ is the inverse to the functor $\Psi [1]$.  That is, we have isomorphisms of functors
 \begin{gather*}
   \Psi \circ \Phi  \cong \text{id}_Y [-1] \text{, \qquad} \Phi \circ \Psi  \cong \text{id}_X [-1].
 \end{gather*}
 For any complex $E \in D(X)$, we write $\Psi^i(E)$ to denote the cohomology $H^i(\Psi (E))$; if $E$ is a sheaf sitting at degree 0, we have that $\Psi^i (E)=0$ unless $0\leq i \leq 1$, i.e.\ $\Psi (E) \in D^{[0,1]}_{\Coh (X)}(X)$.  The same statements hold for $\Phi$ and $Y$.

We also have the following formulas for how rank and fibre degree change under the Fourier-Mukai transforms $\Psi$ and $\Phi$:
\begin{align}
  \begin{pmatrix} r(\Phi E) \\ d(\Phi E) \end{pmatrix} &=
  \begin{pmatrix} c & a \\ d & b \end{pmatrix} \begin{pmatrix} r(E) \\ d(E) \end{pmatrix}  \text{ for all $E \in D(Y)$,}\label{eqn-rdunderPhi}  \\
    \begin{pmatrix} r(\Psi E) \\ d(\Psi E) \end{pmatrix} &=
  \begin{pmatrix} -b & a \\ d & -c \end{pmatrix} \begin{pmatrix} r(E) \\ d(E) \end{pmatrix} \text{ for all $E \in D(X)$}. \label{eqn-rdunderPsi}
\end{align}

We define the following full subcategories of $\Coh (X)$, all of which are extension-closed:
\begin{align*}
 \mathcal T_X &= \{ \text{torsion sheaves on } X \} \\
  \mathcal F_X &= \{ \text{torsion-free sheaves on }X \} \\
  W_{0,X} &= \{ \Psi\text{-WIT}_0 \text{ sheaves on }X \}  \\
  W_{1,X} &= \{ \Psi\text{-WIT}_1 \text{ sheaves on }X \} \\
  \mathcal B_X &= \{ E \in \Coh (X): r(E)=d(E)=0 \} \\
    \Coh (X)_{r>0} &= \{ E \in \Coh (X) : r(E)>0\}.
\end{align*}
And for any $s \in \mathbb{R}$, we define
\begin{align*}
  \Coh (X)_{\mu > s} &= \{ E \in \Coh (X)_{r>0}: \mu (E) >s \} \\
  \Coh (X)_{\mu=s} &= \{ E \in \Coh (X)_{r>0}: \mu (E) =s \} \\
  \Coh (X)_{\mu<s} &= \{ E \in \Coh (X)_{r>0}: \mu (E) <s \}.
\end{align*}
We  define the corresponding full subcategories of $\Coh (Y)$ similarly.  Some relations between these categories are immediate from their definitions.  For instance, we have the torsion pairs $(\mathcal T_X, \mathcal F_X), (W_{0,X}, W_{1,X})$ in $\Coh (X)$.  That $(W_{0,X},W_{1,X})$ is a torsion pair in $\Coh (X)$ follows from \cite[Lemma 6.1]{FMTes} when $\pi$ is an elliptic surface, and the same proof applies when $\pi$ is a fibration of higher dimensions.  Similarly, we have the torsion pairs $(\mathcal T_Y, \mathcal F_Y), (W_{0,Y}, W_{1,Y})$ in $\Coh (Y)$.  Also, for $i=0,1$, we have $\Psi (W_{i,X})=W_{1-i,Y}[-i]$ while $\Phi (W_{i,Y}) = W_{1-i,X}[-i]$.

\subsection{Torsion pairs and equivalences}\label{section-equiv}

 The main goal of this section is to identify a criterion under which a torsion-free $\Psi$-WIT$_1$ sheaf has torsion-free transform, which is Theorem \ref{pro5}.  In Theorem \ref{theorem-main0}, we give a class of sheaves that satisfies this criterion.

 Unless otherwise stated, every result in this section holds regardless of whether $\pi : X \to S$ is an elliptic surface or an elliptic threefold.

Note that, for any torsion sheaf $T$ on $X$, $c_1(T) f \geq 0$, i.e.\ $d(T) \geq 0$.  It follows that $\mathcal B_X$ is closed under subobjects, quotients and extensions in the abelian category $\Coh (X)$, and so is a Serre subcategory of $\Coh (X)$.  Thus we have:

\begin{lemma}\label{lemma17}
If we define
\[
\mathcal B^\circ_X := \{ E \in\Coh (X): \Hom (\mathcal B_X, E)=0\},
 \]
 then $(\mathcal B_X, \mathcal B_X^\circ)$ is a torsion pair in $\Coh (X)$.
\end{lemma}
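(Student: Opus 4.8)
The plan is to verify the standard criterion for obtaining a torsion pair from a Serre subcategory. Since the excerpt has just established that $\mathcal{B}_X$ is closed under subobjects, quotients, and extensions — i.e.\ is a Serre subcategory of $\Coh(X)$ — and $\mathcal{B}_X^\circ$ is defined as its right orthogonal, the work reduces to checking the two defining axioms of a torsion pair: first, that $\Hom(\mathcal{B}_X, \mathcal{B}_X^\circ)=0$, and second, that every coherent sheaf $E$ on $X$ sits in a short exact sequence $0 \to T \to E \to E' \to 0$ with $T \in \mathcal{B}_X$ and $E' \in \mathcal{B}_X^\circ$.

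The vanishing condition $\Hom(\mathcal{B}_X,\mathcal{B}_X^\circ)=0$ is immediate, essentially by construction: it is exactly the defining property of $\mathcal{B}_X^\circ$. So the substance lies in producing the torsion subobject. For this I would construct the maximal subsheaf of $E$ lying in $\mathcal{B}_X$. Concretely, one takes $T$ to be the (sum of, hence largest) subsheaf of $E$ that belongs to $\mathcal{B}_X$; this makes sense because $X$ is noetherian, so among all subsheaves of $E$ lying in $\mathcal{B}_X$ there is a maximal one, and using that $\mathcal{B}_X$ is closed under extensions and quotients, the sum of any two such subsheaves is again in $\mathcal{B}_X$, giving a unique largest subsheaf $T \in \mathcal{B}_X$.

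The key step is then to show the quotient $E' = E/T$ lies in $\mathcal{B}_X^\circ$, i.e.\ that $\Hom(\mathcal{B}_X, E')=0$. I would argue by contradiction: suppose some nonzero map $B \to E'$ exists with $B \in \mathcal{B}_X$. Replacing $B$ by its image (which stays in $\mathcal{B}_X$ since $\mathcal{B}_X$ is closed under quotients), we may assume $B \hookrightarrow E'$ is a nonzero subsheaf of $E'$ lying in $\mathcal{B}_X$. Pulling this subsheaf back along the surjection $E \twoheadrightarrow E'$ produces a subsheaf $\tilde{B} \subseteq E$ sitting in an extension $0 \to T \to \tilde{B} \to B \to 0$; since both $T$ and $B$ lie in $\mathcal{B}_X$ and $\mathcal{B}_X$ is extension-closed, $\tilde{B} \in \mathcal{B}_X$. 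But $\tilde{B}$ strictly contains $T$ (as $B \neq 0$), contradicting the maximality of $T$. Hence $E' \in \mathcal{B}_X^\circ$, completing the decomposition.

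I expect the main (mild) obstacle to be the careful bookkeeping in this last step — ensuring the pullback $\tilde{B}$ genuinely lies in $\mathcal{B}_X$ and strictly exceeds $T$ — but this is routine once the Serre-subcategory properties are in hand. It is worth noting that the argument never uses the specific description of $\mathcal{B}_X$ as the sheaves with $r=d=0$; it works verbatim for the right orthogonal of any Serre subcategory of a noetherian abelian category, so the real content was the already-established fact that $\mathcal{B}_X$ is Serre.
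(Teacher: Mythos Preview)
Your argument is correct and is essentially the same approach as the paper's: the paper simply invokes \cite[Lemma 1.1.3]{Pol}, which is the general statement that in a noetherian abelian category any Serre subcategory together with its right orthogonal forms a torsion pair, and your proof is precisely the standard proof of that lemma. Your closing remark that the argument uses only the Serre property of $\mathcal{B}_X$ and noetherianity of $\Coh(X)$ identifies exactly the content being cited.
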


\begin{proof}
Since $\Coh (X)$ is a Noetherian abelian category, this follows directly from \cite[Lemma 1.1.3]{Pol}.
\end{proof}

Note that  $\mathcal B_X \subset \mathcal T_X$ and $\mathcal F_X \subset \mathcal B_X^\circ$.  Also, if $E$ is a WIT sheaf on $X$, then $E \in \mathcal B_X$ iff $\hat{E} \in \mathcal B_Y$ - see \eqref{eqn-rdunderPhi} and \eqref{eqn-rdunderPsi}.

The following lemma gives us another way to think about objects in the category $\mathcal B_X$:

\begin{lemma}\label{lemma-BXdesc}
A sheaf $E$ on $X$ is in $\mathcal B_X$ iff the restriction $E|_{\pi^{-1}(s)}=0$ for a generic fibre $\pi^{-1}(s)$ of $\pi$.
\end{lemma}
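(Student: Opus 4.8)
The plan is to characterize membership in $\mathcal B_X$ via the generic fibre by analyzing how rank and fibre degree of a coherent sheaf $E$ relate to the restriction $E|_{\pi^{-1}(s)}$ for general $s \in S$. Recall that $\mathcal B_X$ consists of sheaves with $r(E) = d(E) = 0$. The key observation is that both invariants $r(E)$ and $d(E)$ are computed by restriction to the generic fibre: by definition $d(E) = c_1(E) \cdot f$ is the degree of $E|_{\pi^{-1}(s)}$ for generic $s$, and the rank $r(E)$ equals the rank of $E|_{\pi^{-1}(s)}$ (the generic fibre is one-dimensional, so $r(E)$ is just the generic rank along the fibre direction).

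For the forward direction, I would argue as follows. Suppose $E \in \mathcal B_X$, so $r(E) = d(E) = 0$. Since $r(E) = 0$, the sheaf $E$ is a torsion sheaf whose support maps to a proper closed subset of $S$, or else has support dominating $S$ but of fibre-rank zero; the condition $d(E) = 0$ then additionally kills any fibre-supported one-dimensional contribution. The cleanest route is to pass to the restriction functor to a generic fibre. For generic $s$, the fibre $\pi^{-1}(s)$ avoids the codimension-$\geq 1$ locus in $S$ over which the ``extra'' support of $E$ could concentrate, and $E|_{\pi^{-1}(s)}$ is a coherent sheaf on the smooth genus-one curve $\pi^{-1}(s)$ whose rank equals $r(E) = 0$ and whose degree equals $d(E) = 0$. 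A rank-zero degree-zero coherent sheaf on a smooth curve is the zero sheaf (its only possible torsion part would have nonnegative length contributing to the degree, forcing it to vanish), hence $E|_{\pi^{-1}(s)} = 0$.

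For the converse, suppose $E|_{\pi^{-1}(s)} = 0$ for generic $s$. Then immediately the generic-fibre rank vanishes, giving $r(E) = 0$, and since the restriction is literally the zero sheaf its degree is zero, giving $d(E) = c_1(E)\cdot f = 0$; hence $E \in \mathcal B_X$. I expect this direction to be the routine one, amounting to unwinding the definitions of $r$ and $d$ as generic-fibre invariants.

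The main obstacle is making precise the claim that $r(E)$ and $d(E)$ are genuinely read off from $E|_{\pi^{-1}(s)}$ for a \emph{generic} fibre, together with the flatness/generic-flatness input needed to ensure that restriction behaves well. One must invoke generic flatness (or the flatness of $\pi$ together with semicontinuity) to guarantee that for $s$ outside a proper closed subset of $S$, the fibre $\pi^{-1}(s)$ is smooth, meets the support of $E$ transversally enough, and that formation of the restriction commutes with the rank and degree computation; the subtlety is that a nonzero sheaf supported over a proper closed subset of $S$ restricts to zero on a generic fibre yet could a priori have been worried about, but such a sheaf has $r = 0$ and fibre-supported sheaves have $d \geq 0$ with equality only in the relevant degenerate cases, so one checks that the generic restriction indeed detects exactly the vanishing of both $r$ and $d$. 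I would handle this by reducing to the smooth genus-one generic fibre and applying the elementary classification of rank-zero degree-zero sheaves on a smooth projective curve, citing flatness of $\pi$ to justify the interchange of restriction with the invariants $r$ and $d$.
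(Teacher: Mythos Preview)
Your proposal is correct and follows essentially the same idea as the paper's proof: both directions hinge on the fact that $r(E)$ and $d(E)$ are read off from the restriction of $E$ to a generic fibre, and that a rank-zero degree-zero sheaf on a smooth projective curve must vanish.

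The only difference is cosmetic. For the `only if' direction you work directly on the generic fibre: since $r(E)=0$ the restriction is torsion on the curve, and since its degree (which equals its length) is $d(E)=0$, it must be zero. The paper instead routes through the pushforward: it observes that if $\dim \supp(\pi_\ast E)=\dim S$ then the generic fibre restriction is a nonzero torsion sheaf on a curve, forcing $d(E)>0$; so $d(E)=0$ gives $\dim\supp(\pi_\ast E)<\dim S$, and since a nonzero torsion sheaf on a curve always has $H^0\neq 0$, the vanishing of $\pi_\ast E$ at a generic $s$ forces $E|_{\pi^{-1}(s)}=0$. Your formulation is slightly more direct and avoids the pushforward, but the content is the same. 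Your caveats about generic flatness (to ensure restriction to a generic fibre computes rank and degree correctly) are the right technical inputs, and the paper implicitly uses them as well.
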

\begin{proof}
The `if' direction is clear.  For the `only if' direction, suppose $E \in \mathcal B_X$.  Hence $r(E)=0$.  If $\dimension \text{supp} (\pi_\ast E) = \dimension S$, then $d(E)$ would be positive, and so we must have $\dimension \text{supp}(\pi_\ast E) < \dimension S$.  That is, $E|_{\pi^{-1}(s)}=0$ for a generic fibre $\pi^{-1}(s)$ of $\pi$.
\end{proof}
 Given a sheaf $E$ on $X$, we will say $E$ is a fibre sheaf if it is supported on a finite number of fibres of $\pi$.

\begin{remark}
When $\pi$ is an elliptic surface, a sheaf $E$ is in $\mathcal B_X$ iff it is supported on a finite union of fibres of $\pi$ \cite[Section 4.1]{FMTes}.  When $\pi$ is an elliptic threefold, however, the same statement does not hold, because a sheaf $E$ in $\mathcal B_X$ could be supported in dimension 2, but with $\pi(\text{supp}(E))$ being 1-dimensional.
\end{remark}

Motivated by the proof of Lemma \ref{lemma4}, we try to understand the image of the category $\Coh (X)$ under $\Psi$ by considering the  intersections of the various torsion classes and torsion-free classes above, and understanding their images under $\Psi$.  By symmetry, all the results stated for $X$ in this section have their counterparts for $Y$, if we interchange the roles of $X$ and $Y$ (and $\Psi, \Phi$).

\begin{lemma}\cite[Lemma 6.2]{FMTes}\label{lemma-E3-muWIT}
Let $E$ be a sheaf of positive rank on $X$.  If $E$ is $\Psi$-WIT$_0$, then $\mu (E) \geq b/a$.  If $E$ is $\Psi$-WIT$_1$, then $\mu (E) \leq b/a$.
\end{lemma}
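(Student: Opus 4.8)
The plan is to read everything off the rank--fibre-degree transformation formula \eqref{eqn-rdunderPsi}, combined with the elementary fact that the rank of a genuine coherent sheaf is non-negative. The only real input I need is how the \emph{virtual} rank $r(\Psi E)$, which is computed from the Chern character and is what appears in \eqref{eqn-rdunderPsi}, relates to the honest rank of the transform; this relation differs by a sign according to whether $E$ is $\Psi$-WIT$_0$ or $\Psi$-WIT$_1$, and that sign is exactly what produces the two opposite inequalities.

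First I would treat the $\Psi$-WIT$_0$ case. Here $\Psi(E) \cong \hat{E}$ sits in cohomological degree $0$, so $\Psi(E)$ is an honest sheaf and hence $r(\Psi E) = r(\hat{E}) \geq 0$. On the other hand, the top row of \eqref{eqn-rdunderPsi} reads $r(\Psi E) = -b\,r(E) + a\,d(E)$. Combining the two gives $a\,d(E) \geq b\,r(E)$; since $a > 0$ and $r(E) > 0$ by hypothesis, dividing through by $a\,r(E)$ yields $\mu(E) = d(E)/r(E) \geq b/a$, as claimed.

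The $\Psi$-WIT$_1$ case is identical except for a sign. Now $\Psi(E) \cong \hat{E}[-1]$, so $\hat{E}$ is placed in degree $1$ and the shift negates the Chern character of the complex; in particular $r(\Psi E) = -r(\hat{E}) \leq 0$. Feeding this into the same top row of \eqref{eqn-rdunderPsi} gives $-b\,r(E)+a\,d(E) \leq 0$, i.e.\ $a\,d(E) \leq b\,r(E)$, and hence $\mu(E) \leq b/a$. I do not expect any genuine obstacle here, as the argument is essentially bookkeeping with \eqref{eqn-rdunderPsi}; the one point that deserves a moment's care is the sign convention, namely recording that the shift by $[-1]$ in the WIT$_1$ case flips the virtual rank, so that the non-negativity of the rank of the \emph{actual} transform sheaf feeds through to the correct direction of the inequality in each of the two cases.
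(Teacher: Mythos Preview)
Your argument is correct and is exactly the standard computation the paper is pointing to when it says the proof of \cite[Lemma 6.2]{FMTes} goes through unchanged; indeed, the paper itself uses precisely this reasoning (reading off $r(\Psi E)=-b\,r(E)+a\,d(E)$ from \eqref{eqn-rdunderPsi} and comparing signs) in the proof of the next lemma. Nothing further is needed.
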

\begin{proof}
The surface case is already stated in \cite[Lemma 6.2]{FMTes}.  For elliptic threefolds, the proof goes through without change.
\end{proof}

\begin{lemma}\label{lemma-E3-WIT1torsion}
If $T$ is a $\Psi$-WIT$_1$ torsion sheaf on $X$, then $T \in \mathcal B_X$.
\end{lemma}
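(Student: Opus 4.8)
The plan is to reduce the statement to a numerical computation using the rank and fibre-degree transformation formula \eqref{eqn-rdunderPsi}. Since $T$ is a torsion sheaf we already have $r(T)=0$, so membership of $T$ in $\mathcal B_X$ amounts to the single equality $d(T)=0$. Recall moreover that for any torsion sheaf one has $d(T)=c_1(T)\cdot f \geq 0$ (as noted just before Lemma \ref{lemma17}); hence it suffices to rule out $d(T)>0$.

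First I would exploit the hypothesis that $T$ is $\Psi$-WIT$_1$. By definition this means $\Psi(T)\cong \hat{T}[-1]$ for a single sheaf $\hat{T}\in \Coh(Y)$, so in the Grothendieck group we have $[\Psi(T)] = -[\hat{T}]$. Since rank is additive on K-theory, this yields $r(\Psi T) = -r(\hat{T})$. On the other hand, applying \eqref{eqn-rdunderPsi} to $E=T$ and using $r(T)=0$ gives $r(\Psi T) = a\, d(T)$.

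Combining the two expressions for $r(\Psi T)$ produces $r(\hat{T}) = -a\, d(T)$. Now $\hat{T}$ is an honest coherent sheaf, so $r(\hat{T})\geq 0$; since $a>0$ and $d(T)\geq 0$, the right-hand side is $\leq 0$. Therefore both sides vanish, forcing $d(T)=0$, and hence $T\in\mathcal B_X$ as desired.

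I do not expect any serious obstacle here: the only points requiring care are the bookkeeping of the cohomological shift in $\Psi(T)\cong\hat{T}[-1]$ (which flips the sign of the rank) and the sign conventions in \eqref{eqn-rdunderPsi}. An alternative, more geometric route would restrict $T$ to a generic fibre and use that a torsion sheaf on an elliptic curve is WIT$_0$ for the fibrewise transform, concluding via Lemma \ref{lemma-BXdesc}; but this requires base-change compatibility that is delicate without flatness of $T$ over $S$, so the K-theoretic argument above is cleaner.
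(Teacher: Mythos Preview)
Your proof is correct and is essentially identical to the paper's own argument: both use the formula $r(\Psi T)=-b\,r(T)+a\,d(T)=a\,d(T)$ together with the observation that $r(\Psi T)=-r(\hat T)\leq 0$ (since $\hat T$ is an honest sheaf) and $d(T)\geq 0$ to force $d(T)=0$. You have simply spelled out the sign bookkeeping for the shift more explicitly than the paper does.
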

\begin{proof}
The surface case is \cite[Lemma 6.3]{FMTes}.  The following argument works for both surfaces and threefolds: we have $0 \geq r(\Psi T)=-b\cdot r(T)+a\cdot d(T)$.  Since $r(T)=0$ and $d(T) \geq 0$, we have $d(T)=0$.  Hence $T \in \mathcal B_X$ by definition.
\end{proof}

\begin{remark}  Given any $E \in D^b(X)$, we have $r(\Psi E) = -b\cdot r(E)+a\cdot d(E)$.  So when $E$ has positive rank, $\mu (E)=b/a$ is equivalent to $r(\Psi E)=0$.  In other words, if $E$ is a $\Psi$-WIT sheaf on $X$ of positive rank with $\mu (E)=b/a$, then $\hat{E}$ is a torsion sheaf on $Y$.
\end{remark}

The following lemma is slightly more specific than Lemma \ref{lemma-E3-muWIT}:

\begin{lemma}\label{lemma-PsiWIT0posrk}
Suppose $E$ is a $\Psi$-WIT$_0$ sheaf on $X$ and $r(E)>0$.  Then $\mu (E)>b/a$.
\end{lemma}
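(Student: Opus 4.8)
The plan is to upgrade the weak inequality $\mu(E) \geq b/a$ already furnished by Lemma \ref{lemma-E3-muWIT} to a strict one, by ruling out the boundary case $\mu(E) = b/a$ via a contradiction. So suppose $E$ is $\Psi$-WIT$_0$ with $r(E) > 0$, and suppose toward a contradiction that $\mu(E) = b/a$. The first step is purely numerical: by the remark following Lemma \ref{lemma-E3-WIT1torsion}, the equality $\mu(E) = b/a$ for a positive-rank sheaf is equivalent to $r(\Psi E) = 0$. Since $E$ is $\Psi$-WIT$_0$, we have $\Psi(E) \cong \hat{E}$ with $\hat{E}$ an honest sheaf on $Y$, so this says precisely that $\hat{E}$ is a torsion sheaf on $Y$.

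Next I would transfer the WIT type to $Y$. Because $\Phi \circ \Psi \cong \textnormal{id}_X[-1]$, applying $\Phi$ to $\Psi(E) \cong \hat{E}$ gives $\Phi(\hat{E}) \cong E[-1]$, so $\hat{E}$ is a $\Phi$-WIT$_1$ sheaf on $Y$ (with transform $E$). Thus $\hat{E}$ is a $\Phi$-WIT$_1$ \emph{torsion} sheaf on $Y$. Invoking the $Y$-counterpart of Lemma \ref{lemma-E3-WIT1torsion} (valid by the symmetry in $X$ and $Y$ noted at the start of this section), we conclude $\hat{E} \in \mathcal B_Y$.

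Finally I would pull this conclusion back to $X$. By the remark following Lemma \ref{lemma17}, for a WIT sheaf one has $E \in \mathcal B_X$ if and only if $\hat{E} \in \mathcal B_Y$; hence $E \in \mathcal B_X$. But every object of $\mathcal B_X$ satisfies $r = d = 0$ by definition, contradicting $r(E) > 0$. Therefore $\mu(E) \neq b/a$, and combined with Lemma \ref{lemma-E3-muWIT} this forces $\mu(E) > b/a$, as claimed.

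There is no genuine obstacle in this argument—it is a short chaining of facts already established in the section. The only point demanding care is bookkeeping of the WIT indices under $\Psi$ and $\Phi$: one must correctly observe that a $\Psi$-WIT$_0$ sheaf on $X$ has a $\Phi$-WIT$_1$ transform on $Y$, so that the $Y$-version of Lemma \ref{lemma-E3-WIT1torsion} (phrased for $\Phi$-WIT$_1$ torsion sheaves) applies. Everything else is the rank formula \eqref{eqn-rdunderPsi} and the definition of $\mathcal B_X$.
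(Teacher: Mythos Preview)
Your argument is correct and is essentially identical to the paper's own proof: assume $\mu(E)=b/a$, deduce that $\hat{E}$ is a $\Phi$-WIT$_1$ torsion sheaf hence lies in $\mathcal B_Y$, and pull back to get $E\in\mathcal B_X$, contradicting $r(E)>0$. The only difference is that you spell out in more detail why $\hat{E}$ is $\Phi$-WIT$_1$ and carefully invoke the $Y$-version of Lemma~\ref{lemma-E3-WIT1torsion}, whereas the paper compresses these into a single sentence via the preceding remark.
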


\begin{proof}
Suppose $E$ satisfies the assumptions, and $\mu (E)=b/a$. Then by the remark above, $\hat{E}$ is a $\Phi$-WIT$_1$ torsion sheaf, and hence lies in $\mathcal B_Y$ by Lemma \ref{lemma-E3-WIT1torsion}.   This implies $E$ itself is in $\mathcal B_X$,  contradicting $r(E)>0$.  Then, by Lemma \ref{lemma-E3-muWIT}, we must have $\mu (E) > b/a$.
\end{proof}

\begin{lemma}\label{lemma-cohomofFMTfibresheaf}
Suppose $T \in \mathcal B_X$.   Then $\Psi^0(T), \Psi^1(T)$ are both torsion sheaves.
\end{lemma}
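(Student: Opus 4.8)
The plan is to use the torsion pair $(W_{0,X}, W_{1,X})$ to split $T$ into its $\Psi$-WIT$_0$ and $\Psi$-WIT$_1$ parts, whose transforms are honest sheaves, and then to read off $\Psi^0(T)$ and $\Psi^1(T)$ directly. First I would apply the torsion pair $(W_{0,X}, W_{1,X})$ to $T$, obtaining a short exact sequence $0 \to T_0 \to T \to T_1 \to 0$ in $\Coh(X)$ with $T_0 \in W_{0,X}$ and $T_1 \in W_{1,X}$. Since $\mathcal B_X$ is a Serre subcategory of $\Coh(X)$ (in particular closed under subobjects and quotients, as recorded just before Lemma \ref{lemma17}), both the subobject $T_0$ and the quotient $T_1$ again lie in $\mathcal B_X$. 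Thus $T_0$ is a $\Psi$-WIT$_0$ sheaf in $\mathcal B_X$ and $T_1$ is a $\Psi$-WIT$_1$ sheaf in $\mathcal B_X$; by the observation following Lemma \ref{lemma17} (that a WIT sheaf $E$ lies in $\mathcal B_X$ iff $\hat E$ lies in $\mathcal B_Y$), their transforms satisfy $\widehat{T_0}, \widehat{T_1} \in \mathcal B_Y$.

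Next I would apply $\Psi$ to the short exact sequence, viewed as an exact triangle $\Psi(T_0) \to \Psi(T) \to \Psi(T_1) \to \Psi(T_0)[1]$, and take the long exact sequence of cohomology sheaves. Because $T_0$ is $\Psi$-WIT$_0$, we have $\Psi(T_0) \cong \widehat{T_0}$ concentrated in degree $0$; because $T_1$ is $\Psi$-WIT$_1$, we have $\Psi(T_1) \cong \widehat{T_1}[-1]$ concentrated in degree $1$. Since the two transforms live in complementary cohomological degrees, the long exact sequence collapses and yields isomorphisms $\Psi^0(T) \cong \widehat{T_0}$ and $\Psi^1(T) \cong \widehat{T_1}$. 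Finally, as every object of $\mathcal B_Y$ has rank $0$ and is therefore a torsion sheaf (so $\mathcal B_Y \subseteq \mathcal T_Y$), both $\widehat{T_0}$ and $\widehat{T_1}$ are torsion; hence so are $\Psi^0(T)$ and $\Psi^1(T)$.

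I do not expect a serious obstacle: the only points needing care are that the torsion pair places a subobject and a quotient of $T$ into $\mathcal B_X$ (exactly the Serre property), and that the long exact sequence degenerates as claimed. As a consistency check, the formulas \eqref{eqn-rdunderPsi} already force $r(\Psi^0 T) = r(\Psi^1 T)$ and $d(\Psi^0 T) = d(\Psi^1 T)$ from $r(T) = d(T) = 0$, which is compatible with both transforms being torsion. One can also see the statement geometrically: by Lemma \ref{lemma-BXdesc}, $T$ is supported over a subset of $S$ of dimension $< \dim S$, and since $\Psi$ is a relative transform over $S$ its output is supported over the same locus, hence in dimension $< \dim Y$; this gives an alternative proof should any of the categorical steps above require extra justification.
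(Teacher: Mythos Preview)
Your argument is correct but follows a genuinely different route from the paper's. The paper argues numerically and by contradiction: from $T\in\mathcal B_X$ one gets $r(\Psi T)=d(\Psi T)=0$, so $\Psi^0(T)$ and $\Psi^1(T)$ share the same rank and fibre degree; if that common rank were positive, the $\Phi$-WIT indices of $\Psi^0(T)$ and $\Psi^1(T)$ together with the slope bounds of Lemma~\ref{lemma-E3-muWIT} would pin both slopes to the boundary value, contradicting Lemma~\ref{lemma-PsiWIT0posrk}. Your approach instead exploits the torsion pair $(W_{0,X},W_{1,X})$ and the Serre property of $\mathcal B_X$ to split $T$ into WIT pieces $T_0,T_1\in\mathcal B_X$, then reads off $\Psi^0(T)\cong\widehat{T_0}$ and $\Psi^1(T)\cong\widehat{T_1}$ from the degenerated long exact sequence. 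This is more direct, avoids the slope lemmas entirely, and actually yields a sharper conclusion: both cohomologies lie in $\mathcal B_Y$, not merely in $\mathcal T_Y$. The paper's argument, on the other hand, illustrates how the slope constraints of Lemmas~\ref{lemma-E3-muWIT} and~\ref{lemma-PsiWIT0posrk} interact, which is a theme used elsewhere in the section. Your geometric remark about supports over $S$ is also a valid third proof.
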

\begin{proof}
Suppose $T \in \mathcal B_X$.  Then $r(\Psi T)=0=d(\Psi T)$.  Hence $\Psi^0(T)$ and $\Psi^1(T)$ have the same rank and fibre degree.  Suppose $\Psi^0(T)$ has positive rank.  Then $\Psi^1(T)$ also has positive rank, and $\mu (\Psi^0(T))=\mu (\Psi^1(T))$.  Since $\Psi^0(T)$ is $\Phi$-WIT$_1$ and $\Psi^1(T)$ is $\Phi$-WIT$_0$, by Lemma \ref{lemma-E3-muWIT}, we have $\mu (\Psi^0(T))=\mu (\Psi^1(T))=b/a$.  However, this means $\Psi^1(T)$ is $\Phi$-WIT$_0$, with positive rank and  $\mu (\Psi^1(T)) =b/a$, contradicting Lemma \ref{lemma-PsiWIT0posrk}.  Hence $\Psi^0(T), \Psi^1(T)$ must both be torsion sheaves.
\end{proof}

%


\begin{lemma}\label{lemma-equiv3}
We have an equivalence of categories
\begin{gather}\label{eq16}
\mathcal F_X \cap \{ E \in \Coh (X) : \Ext^1 (\mathcal B_X \cap W_{0,X}, E)=0\} \cap W_{1,X} \overset{\Psi [1]}{\to} \mathcal B_Y^\circ \cap W_{0,Y}.
\end{gather}
\end{lemma}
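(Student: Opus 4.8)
The plan is to realise $\Psi[1]$ together with $\Phi$ as mutually inverse equivalences between the ambient WIT categories, and then to match the two defining conditions on each side. First I would note that, since $\Psi$ is a Fourier-Mukai equivalence with $\Phi\circ\Psi\cong\text{id}_X[-1]$ and $\Psi\circ\Phi\cong\text{id}_Y[-1]$, the relations $\Psi(W_{1,X})=W_{0,Y}[-1]$ and $\Phi(W_{0,Y})=W_{1,X}$ show that $\Psi[1]\colon W_{1,X}\to W_{0,Y}$ and $\Phi\colon W_{0,Y}\to W_{1,X}$ are mutually inverse equivalences of additive categories: full faithfulness is inherited from the derived equivalence, and the WIT conditions guarantee the images are genuine sheaves. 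Writing $\hat E=\Psi[1](E)$ for $E\in W_{1,X}$, it therefore suffices to prove that, for such $E$, one has $\hat E\in\mathcal B_Y^\circ$ if and only if $E\in\mathcal F_X$ and $\Ext^1(\mathcal B_X\cap W_{0,X},E)=0$. The equivalence \eqref{eq16} then follows formally, since this biconditional, read through the inverse $\Phi$ (for $G\in\mathcal B_Y^\circ\cap W_{0,Y}$ put $E=\Phi(G)\in W_{1,X}$, so that $\hat E=G$), also sends $\mathcal B_Y^\circ\cap W_{0,Y}$ back into the left-hand category.

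The core is a Hom-translation across the transform. Since $\Phi$ is fully faithful and $\Phi(\hat E)=\Phi(\Psi(E)[1])=(\Phi\Psi E)[1]=E$, for every $B\in D(Y)$ I obtain a natural isomorphism $\Hom_{D(Y)}(B,\hat E)\cong\Hom_{D(X)}(\Phi B,E)$. I would then restrict the torsion pair $(W_{0,Y},W_{1,Y})$ to the Serre subcategory $\mathcal B_Y$: every $B\in\mathcal B_Y$ sits in a short exact sequence $0\to B_0\to B\to B_1\to 0$ with $B_0\in\mathcal B_Y\cap W_{0,Y}$ and $B_1\in\mathcal B_Y\cap W_{1,Y}$ (both factors land in $\mathcal B_Y$ since it is closed under subobjects and quotients). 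Feeding this into the long exact $\Hom$-sequence shows that $\Hom(\mathcal B_Y,\hat E)=0$ is equivalent to the simultaneous vanishing of $\Hom(\mathcal B_Y\cap W_{0,Y},\hat E)$ and $\Hom(\mathcal B_Y\cap W_{1,Y},\hat E)$.

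It remains to identify each of these two vanishings on $X$. For $B_0\in\mathcal B_Y\cap W_{0,Y}$ the transform $\Phi B_0=\widehat{B_0}$ is a sheaf lying in $\mathcal B_X\cap W_{1,X}$ (using $\Phi(W_{0,Y})=W_{1,X}$ and the fact that a WIT sheaf lies in $\mathcal B_Y$ iff its transform lies in $\mathcal B_X$), so the isomorphism above reads $\Hom(B_0,\hat E)\cong\Hom_{\Coh(X)}(\widehat{B_0},E)$; hence the $W_{0,Y}$-part vanishes iff $\Hom(\mathcal B_X\cap W_{1,X},E)=0$. For $B_1\in\mathcal B_Y\cap W_{1,Y}$ I would write $\Phi B_1=\widehat{B_1}[-1]$ with $\widehat{B_1}\in\mathcal B_X\cap W_{0,X}$ (using $\Phi(W_{1,Y})=W_{0,X}[-1]$), so that $\Hom(B_1,\hat E)\cong\Hom_{D(X)}(\widehat{B_1}[-1],E)\cong\Ext^1_{\Coh(X)}(\widehat{B_1},E)$, giving that the $W_{1,Y}$-part vanishes iff $\Ext^1(\mathcal B_X\cap W_{0,X},E)=0$ — exactly the second left-hand condition.

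Finally I would show that, for $E\in W_{1,X}$, the condition $\Hom(\mathcal B_X\cap W_{1,X},E)=0$ is equivalent to $E$ being torsion-free. One direction is immediate, as any sheaf map from a torsion object into a torsion-free sheaf is zero. Conversely, if $E$ had a nonzero torsion subsheaf $T$, then $T\in W_{1,X}$ because $W_{1,X}$ is closed under subobjects, and, being a $\Psi$-WIT$_1$ torsion sheaf, Lemma \ref{lemma-E3-WIT1torsion} forces $T\in\mathcal B_X$; the inclusion $T\hookrightarrow E$ would then be a nonzero element of $\Hom(\mathcal B_X\cap W_{1,X},E)$, a contradiction. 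I expect the main obstacle to be the careful degree-bookkeeping in the Hom-translation — in particular, correctly tracking the shift $[-1]$ on the $W_{1,Y}$-part so that a $\Hom$ on $Y$ becomes an $\Ext^1$ on $X$, and justifying that $\Phi$ respects $\mathcal B$ on both WIT pieces. Once these are pinned down, the identification of ``torsion-free'' with the $W_{0,Y}$-Hom-vanishing via Lemma \ref{lemma-E3-WIT1torsion} is the conceptual crux that makes the two listed left-hand conditions collapse into the single right-hand condition defining $\mathcal B_Y^\circ$.
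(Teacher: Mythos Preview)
Your proof is correct and follows essentially the same approach as the paper's. The paper isolates the two key Hom-translations as separate Lemmas \ref{lemma15} and \ref{lemma16} (torsion-freeness of $\hat F$ iff $\Hom(\mathcal B_Y\cap W_{0,Y},F)=0$; and $\Hom(\mathcal B_Y\cap W_{1,Y},F)\cong\Ext^1(\mathcal B_X\cap W_{0,X},\hat F)$), then combines them via the torsion pair $(\mathcal B_Y\cap W_{0,Y},\mathcal B_Y\cap W_{1,Y})$ in $\mathcal B_Y$ --- you carry out exactly these steps inline, with the same use of Lemma \ref{lemma-E3-WIT1torsion} to identify torsion-freeness.
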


We single out two key steps in the proof of Lemma \ref{lemma-equiv3} as Lemmas \ref{lemma15} and  \ref{lemma16} below.  Lemma \ref{lemma15} is a  generalisation of  \cite[Lemma 9.4]{BMef}, which says that torsion-free WIT$_0$ sheaves have torsion-free transforms:

\begin{lemma}\label{lemma15}
Let $F$ be a $\Phi$-WIT$_0$ sheaf on $Y$.  Then $\hat{F}$ is a torsion-free sheaf on $X$ if and only if $\Hom (\mathcal B_Y \cap W_{0,Y},F)=0$.
\end{lemma}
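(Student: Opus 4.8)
The plan is to convert the torsion-freeness of $\hat{F}=\Phi(F)$ into a $\Hom$-vanishing on $X$ and then transport it across the equivalence $\Phi$ (whose quasi-inverse is $\Psi[1]$, since $\Phi\circ\Psi\cong\text{id}_X[-1]$ and $\Psi\circ\Phi\cong\text{id}_Y[-1]$). The first observation I would record is that $\hat F$ is itself $\Psi$-WIT$_1$: from $\Psi\circ\Phi\cong\text{id}_Y[-1]$ we get $\Psi(\hat F)\cong F[-1]$, so $\hat F\in W_{1,X}$ with transform $\widehat{\hat F}=F$. Since $\hat F$ is a sheaf, it is torsion-free precisely when its maximal torsion subsheaf vanishes, and the whole problem becomes controlling that torsion subsheaf.

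For the direction asserting that torsion-freeness implies the vanishing, I would take any $B\in\mathcal B_Y\cap W_{0,Y}$ together with a morphism $g:B\to F$. Because $B$ is $\Phi$-WIT$_0$, the transform $\hat B=\Phi(B)$ is a genuine sheaf, and because $B$ is a WIT sheaf lying in $\mathcal B_Y$, the observation recorded after Lemma \ref{lemma17} (that a WIT sheaf lies in $\mathcal B$ if and only if its transform does) gives $\hat B\in\mathcal B_X$; in particular $\hat B$ is torsion. Applying $\Phi$ to $g$ produces a morphism $\Phi(g):\hat B\to\hat F$ of sheaves on $X$ from a torsion sheaf into a torsion-free one, hence $\Phi(g)=0$, and full faithfulness of $\Phi$ then forces $g=0$. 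This yields $\Hom(\mathcal B_Y\cap W_{0,Y},F)=0$.

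The substantive direction is the converse. Let $T\subseteq\hat F$ be the maximal torsion subsheaf. Since $\hat F\in W_{1,X}$ and $W_{1,X}$ is the torsion-free class of the torsion pair $(W_{0,X},W_{1,X})$, it is closed under subsheaves, so $T\in W_{1,X}$; being a $\Psi$-WIT$_1$ torsion sheaf, Lemma \ref{lemma-E3-WIT1torsion} places $T$ in $\mathcal B_X$. The crucial point is then to identify the transform of $T$: as $T$ is $\Psi$-WIT$_1$ inside $\mathcal B_X$, its transform $\hat T=\Psi^1(T)$ lies in $W_{0,Y}$ (because $\Psi(W_{1,X})=W_{0,Y}[-1]$) and in $\mathcal B_Y$ (by the same observation as above), so $\hat T\in\mathcal B_Y\cap W_{0,Y}$. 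Applying $\Psi[1]=\Phi^{-1}$ to the inclusion $T\hookrightarrow\hat F$, and using $\Psi[1](T)=\hat T$ and $\Psi[1](\hat F)=F$, yields a morphism $\hat T\to F$ in $\Coh(Y)$ which, by full faithfulness, is nonzero whenever $T\neq 0$. Under the hypothesis $\Hom(\mathcal B_Y\cap W_{0,Y},F)=0$ this is impossible unless $T=0$, i.e.\ $\hat F$ is torsion-free.

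The step I expect to require the most care is verifying that the transform $\hat T$ of the torsion part lands \emph{exactly} in the test category $\mathcal B_Y\cap W_{0,Y}$ — that is, correctly bookkeeping the WIT indices under $\Psi$ and confirming both the $W_{0,Y}$ membership and the $\mathcal B_Y$ membership. This is precisely what upgrades the argument from a one-sided implication to an equivalence of the two $\Hom$-conditions; everything else reduces to the formal properties of the equivalence and to Lemma \ref{lemma-E3-WIT1torsion}.
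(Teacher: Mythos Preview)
Your proof is correct and follows essentially the same route as the paper's: both identify the maximal torsion subsheaf $T\subset\hat F$ as a $\Psi$-WIT$_1$ torsion sheaf, invoke Lemma \ref{lemma-E3-WIT1torsion} to place it in $\mathcal B_X\cap W_{1,X}$, and use the equivalence to transport the inclusion $T\hookrightarrow\hat F$ to a morphism in $\Hom(\mathcal B_Y\cap W_{0,Y},F)$. The only cosmetic difference is that the paper packages both directions into the single isomorphism $\Hom(\mathcal B_X\cap W_{1,X},\hat F)\cong\Hom(\mathcal B_Y\cap W_{0,Y},F)$, whereas you unpack each implication separately.
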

\begin{proof}
Consider the short exact sequence $0 \to A \to \hat{F} \to B \to 0$ in $\Coh (X)$, where $A$ is the maximal torsion subsheaf of $\hat{F}$.  Since $\hat{F}$ is $\Psi$-WIT$_1$, so is $A$, and so $A \in \mathcal B_X \cap W_{1,X}$ by Lemma \ref{lemma-E3-WIT1torsion}.  On the other hand, we have $\Hom (\mathcal B_X \cap W_{1,X},\hat{F}) \cong \Hom (\mathcal B_Y \cap W_{0,Y},F)$.

Therefore, if $\Hom (\mathcal B_Y \cap W_{0,Y},F)=0$, then $A$ must be zero, i.e.\ $\hat{F}$ is torsion-free.  Conversely, if $\hat{F}$ is torsion-free, then because every sheaf in $\mathcal B_X \cap W_{1,X}$ is torsion, we have $\Hom (\mathcal B_X \cap W_{1,X},\hat{F}) =0$, and so $\Hom (\mathcal B_Y \cap W_{0,Y},F)=0$.  Thus the lemma holds.
\end{proof}

\begin{lemma}\label{lemma16}
Let $F$ be a $\Phi$-WIT$_0$ sheaf on $Y$.  Then
\[
  \Hom (\mathcal B_Y \cap W_{1,Y}, F) \cong \Ext^1 (\mathcal B_X \cap W_{0,X},\hat{F}).
\]
\end{lemma}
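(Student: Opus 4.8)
The plan is to transport the left-hand $\Ext^1$ group on $X$ through the equivalence $\Psi$ and watch it turn into the right-hand $\Hom$ group on $Y$, with the shifts conspiring to convert a degree-$1$ extension into a degree-$0$ homomorphism. First I would record the two facts about $\hat{F}$ that make this possible. Since $F$ is $\Phi$-WIT$_0$, its transform $\hat{F}=\Phi(F)$ is an honest sheaf on $X$. Applying $\Psi$ to $\hat{F}=\Phi(F)$ and using $\Psi\circ\Phi\cong \text{id}_Y[-1]$ gives $\Psi(\hat{F})\cong F[-1]$; in particular $\hat{F}\in W_{1,X}$ and its $\Psi$-transform is $F$ itself.

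Next I would pin down the relevant equivalence of subcategories. Using $\Psi(W_{0,X})=W_{1,Y}$ together with the remark that a WIT sheaf $E$ lies in $\mathcal B_X$ iff $\hat{E}\in\mathcal B_Y$ (via the rank/fibre-degree formulas \eqref{eqn-rdunderPhi}, \eqref{eqn-rdunderPsi}), the functor $\Psi$ carries each $B\in\mathcal B_X\cap W_{0,X}$ to a sheaf $\hat{B}=\Psi(B)$ sitting in degree $0$ and lying in $\mathcal B_Y\cap W_{1,Y}$, and this assignment is an equivalence $\mathcal B_X\cap W_{0,X}\xrightarrow{\ \sim\ }\mathcal B_Y\cap W_{1,Y}$ (its quasi-inverse is the restriction of $\Phi[1]$). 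This is the bijection that makes the two ``$\Hom$ out of a subcategory'' expressions correspond, exactly as in the proof of Lemma \ref{lemma15}.

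The core computation is then a one-line chain of isomorphisms. Fix $B\in\mathcal B_X\cap W_{0,X}$; since $B$ and $\hat{F}$ are both sheaves, $\Ext^1_{D(X)}(B,\hat{F})=\Hom_{D(X)}(B,\hat{F}[1])$. Because $\Psi$ is an equivalence of triangulated categories it is fully faithful, so
\[
\Hom_{D(X)}(B,\hat{F}[1]) \cong \Hom_{D(Y)}\bigl(\Psi(B),\, \Psi(\hat{F})[1]\bigr) \cong \Hom_{D(Y)}\bigl(\hat{B},\, F[-1][1]\bigr) = \Hom_{\Coh(Y)}(\hat{B},F),
\]
using $\Psi(B)=\hat{B}$ (in degree $0$) and $\Psi(\hat{F})\cong F[-1]$ from the first paragraph. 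As $B$ ranges over $\mathcal B_X\cap W_{0,X}$, its transform $\hat{B}$ ranges over all of $\mathcal B_Y\cap W_{1,Y}$ by the equivalence above, which yields the stated isomorphism $\Hom(\mathcal B_Y\cap W_{1,Y},F)\cong \Ext^1(\mathcal B_X\cap W_{0,X},\hat{F})$.

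The one step that requires genuine care, rather than formal manipulation, is the bookkeeping of cohomological degrees: I must confirm that $\Psi(B)$ is concentrated in degree $0$ (so that no shift is introduced on the source side) while $\Psi(\hat{F})$ is concentrated in degree $1$, so that the $[1]$ coming from $\Ext^1$ precisely cancels the $[-1]$ and leaves an ordinary sheaf $\Hom$ on $Y$ rather than another $\Ext^1$. These placements are exactly what the WIT identities $\Psi(W_{0,X})=W_{1,Y}$ and $\Psi(\hat{F})\cong F[-1]$ guarantee, so the only real content is verifying them once and then tracking the indices; the rest is the full faithfulness of $\Psi$.
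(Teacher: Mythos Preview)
Your proof is correct and is exactly the argument the paper has in mind: the paper's own proof is the single sentence ``This follows from $\Phi$ being an equivalence,'' and your proposal simply unpacks that sentence, tracking the WIT indices and the shift $\Psi\circ\Phi\cong\text{id}_Y[-1]$ so that the $[1]$ from $\Ext^1$ cancels the $[-1]$ on $\Psi(\hat F)$. The only cosmetic difference is that you phrase everything via $\Psi$ rather than $\Phi$, which is immaterial since they are quasi-inverse up to shift.
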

\begin{proof}
This follows from $\Phi$ being an equivalence.
\end{proof}

\begin{proof}[Proof of Lemma \ref{lemma-equiv3}]
Take any nonzero $F \in \mathcal B_Y^\circ \cap W_{0,Y}$.  Then $\Hom (\mathcal B_Y, F)=0$, and so by Lemma \ref{lemma15}, we know $\hat{F}$ is torsion-free, i.e.\ $\hat{F} \in \mathcal F_X$.  On the other hand, Lemma \ref{lemma16} implies that $\Ext^1 (\mathcal B_X \cap W_{0,X},\hat{F})=0$.  Hence $\hat{F}$ lies in the left-hand side of \eqref{eq16}.

For the other direction, take any nonzero sheaf $E$ belonging to the left-hand side of \eqref{eq16}. By Lemmas \ref{lemma15} and \ref{lemma16}, we get that
\[
\Hom (\mathcal B_Y\cap W_{0,Y},\hat{E})=0=\Hom (\mathcal B_Y \cap W_{1,Y},\hat{E}).
\]
Since $(\mathcal B_Y \cap W_{0,Y}, \mathcal B_Y \cap W_{1,Y})$ is a torsion pair in $\mathcal B_Y$ (see Lemma \ref{lemma17}), we get $\Hom (\mathcal B_Y,\hat{E})=0$, i.e.\ $\hat{E} \in \mathcal B_Y^\circ$. This completes the proof of the lemma.
\end{proof}

The following lemma for elliptic surfaces generalises to elliptic threefolds with the same proof:
\begin{lemma}\cite[Lemma 6.4]{FMTes}\label{lemma-FMTesLemma6-4}
Let $E$ be a torsion-free sheaf on $X$ such that the restriction of $E$ to a general fibre of $\pi$ is stable.  Suppose $\mu (E) < b/a$.  Then $E$ is $\Psi$-WIT$_1$.
\end{lemma}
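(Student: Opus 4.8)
Looking at Lemma \ref{lemma-FMTesLemma6-4}, I need to prove that a torsion-free sheaf $E$ with stable generic-fibre restriction and $\mu(E) < b/a$ is $\Psi$-WIT$_1$.

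\medskip

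The plan is to use the torsion pair $(W_{0,X}, W_{1,X})$ to decompose $E$, and then show the $W_{0,X}$ part must vanish under the slope hypothesis. First I would consider the canonical short exact sequence coming from this torsion pair,
\[
0 \to E_0 \to E \to E_1 \to 0,
\]
where $E_0 \in W_{0,X}$ is $\Psi$-WIT$_0$ and $E_1 \in W_{1,X}$ is $\Psi$-WIT$_1$. My goal is to show that $E_0 = 0$, which immediately gives $E \in W_{1,X}$, i.e.\ $E$ is $\Psi$-WIT$_1$. Since $E$ is torsion-free, the subsheaf $E_0$ is also torsion-free; if I can show $E_0$ has positive rank whenever it is nonzero, I can bring Lemma \ref{lemma-PsiWIT0posrk} to bear.

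\medskip

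The key mechanism is the interaction between the slope hypothesis $\mu(E) < b/a$ and the stability of $E$ on a general fibre. By Lemma \ref{lemma-PsiWIT0posrk}, if $E_0$ were nonzero of positive rank, then $\mu(E_0) > b/a$. On the other hand, the quotient $E_1$, being a nonzero $\Psi$-WIT$_1$ sheaf, satisfies $\mu(E_1) \leq b/a$ by Lemma \ref{lemma-E3-muWIT} (assuming $E_1$ has positive rank; if $r(E_1)=0$ then $E_1$ is a $\Psi$-WIT$_1$ torsion sheaf, hence lies in $\mathcal B_X$ by Lemma \ref{lemma-E3-WIT1torsion}, forcing $d(E_1)=0$). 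The combination $\mu(E_0) > b/a > \mu(E)$ should contradict the fiberwise stability of $E$: restricting the sequence to a general fibre, the subsheaf $E_0|_{X_s}$ would be a destabilising subsheaf of the stable sheaf $E|_{X_s}$, since its slope exceeds that of $E$. I would need to verify that restriction to a general fibre is exact (so that $E_0|_{X_s} \hookrightarrow E|_{X_s}$ remains a subsheaf) and that fibre degrees and ranks behave compatibly with the generic-fibre slope, which holds because $d$ and $r$ compute precisely the degree and rank of the restriction.

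\medskip

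The main obstacle I anticipate is the careful bookkeeping of the edge cases where ranks vanish. If $E_0$ is nonzero but torsion, then $E_0$ is a $\Psi$-WIT$_0$ torsion sheaf; I would need to rule this out or handle it separately, noting that a torsion subsheaf of the torsion-free $E$ must be zero, so in fact $E_0$ is automatically torsion-free and the only question is whether it is zero or of positive rank. Thus the argument cleanly reduces to: $E_0$ nonzero torsion-free implies $r(E_0)>0$ implies $\mu(E_0)>b/a$, which destabilises $E|_{X_s}$ against $\mu(E)<b/a$. The delicate point is ensuring the generic fibre is chosen to avoid the finitely many bad fibres and to make $E|_{X_s}$ genuinely stable while keeping $E_0|_{X_s}$ a nonzero saturated-enough subsheaf computing the expected slope; once that genericity is secured, the contradiction is immediate and forces $E_0 = 0$.
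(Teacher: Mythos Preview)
Your argument is correct and is essentially the proof of \cite[Lemma 6.4]{FMTes}, to which the paper simply defers without reproducing the details. The decomposition via the torsion pair $(W_{0,X},W_{1,X})$, the use of Lemma~\ref{lemma-PsiWIT0posrk} to force $\mu(E_0)>b/a$, and the appeal to fiberwise stability of $E$ to obtain a contradiction (with the edge case $r(E_1)=0$ handled via Lemma~\ref{lemma-E3-WIT1torsion}, which forces $\mu(E_0)=\mu(E)<b/a$ directly) is exactly the standard mechanism.
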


Lemma \ref{lemma-FMTesLemma6-4} implies that, if $E$ is a torsion-free sheaf on $X$ such that its restriction to the generic fibre of $\pi$ is stable, then $E(m)$ is $\Psi$-WIT$_1$ for $m \ll 0$.  Lemma \ref{lemma23} shows, however, that  the $\Psi$-WIT$_1$ torsion-free sheaves we obtain this way do not always have torsion-free transforms.  This is in contrast with the case of $\Psi$-WIT$_0$ torsion-free sheaves, which always have torsion-free transforms whether $\pi$ is an elliptic surface or an elliptic threefold  (see \cite[Lemma 7.2]{FMTes} and \cite[Lemma 9.4]{BMef}).

\begin{lemma}\label{lemma-equiv2}
The functor $\Psi [1]$  restricts to an equivalence of categories
\begin{gather}\label{eq17}
W_{1,X} \cap \Coh (X)_{r >0} \cap \Coh (X)_{\mu < b/a} \overset{\Psi [1]}{\to} W_{0,Y} \cap \Coh (Y)_{r>0} \cap \Coh (Y)_{\mu > -c/a}.
\end{gather}
\end{lemma}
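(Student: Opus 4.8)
The plan is to show that $\Psi[1]$ and its quasi-inverse $\Phi$ map the two displayed subcategories into each other, and then to invoke the fact that $\Psi[1]$ is already an equivalence on all of $D(X)$. First I would record that, since $\Psi \circ \Phi \cong \text{id}_Y[-1]$ and $\Phi \circ \Psi \cong \text{id}_X[-1]$ and both functors commute with the shift, $\Phi$ is a quasi-inverse to $\Psi[1]$ on the whole derived category; indeed $\Phi \circ (\Psi[1]) \cong \text{id}_X$ and $(\Psi[1]) \circ \Phi \cong \text{id}_Y$. In particular $\Psi[1]$ is fully faithful when restricted to any full subcategory, so it remains only to verify the two object-level containments: that $\Psi[1]$ sends the left-hand category into the right-hand one, and that $\Phi$ sends the right-hand category into the left-hand one. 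Essential surjectivity then follows automatically, since for $G$ in the right-hand category we will have $\Phi(G)$ in the left-hand category with $(\Psi[1])(\Phi(G)) \cong G$.

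For the forward containment, I take a sheaf $E$ in the left-hand category. Because $E \in W_{1,X}$ and $\Psi(W_{1,X}) = W_{0,Y}[-1]$, the object $\Psi(E)[1] = \hat{E}$ is an honest sheaf lying in $W_{0,Y}$. I would then apply \eqref{eqn-rdunderPsi}, together with the fact that $\Psi(E) = \hat{E}[-1]$ negates rank and fibre degree, to obtain
\begin{align*}
r(\hat{E}) &= b\, r(E) - a\, d(E), & d(\hat{E}) &= -d\, r(E) + c\, d(E).
\end{align*}
The hypothesis $\mu(E) < b/a$ (with $a>0$) gives $b\,r(E) - a\,d(E) > 0$, so $\hat{E}$ has positive rank. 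Using $bc - ad = 1$ (as $\begin{pmatrix} c & a \\ d & b \end{pmatrix} \in \text{SL}_2(\mathbb{Z})$), a direct computation yields
\[
\mu(\hat{E}) + \frac{c}{a} = \frac{r(E)}{a\,\bigl(b\,r(E) - a\,d(E)\bigr)} > 0,
\]
so $\mu(\hat{E}) > -c/a$ and $\hat{E}$ lies in the right-hand category.

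The reverse containment is the symmetric computation for $\Phi$. Given $G$ in the right-hand category, $G \in W_{0,Y}$ and $\Phi(W_{0,Y}) = W_{1,X}$ force $\check{G} := \Phi(G)$ to be a $\Psi$-WIT$_1$ sheaf; applying \eqref{eqn-rdunderPhi} gives $r(\check{G}) = c\,r(G) + a\,d(G)$ and $d(\check{G}) = d\,r(G) + b\,d(G)$. The hypothesis $\mu(G) > -c/a$ yields $r(\check{G}) > 0$, and again using $bc - ad = 1$ one finds
\[
\frac{b}{a} - \mu(\check{G}) = \frac{r(G)}{a\,\bigl(c\,r(G) + a\,d(G)\bigr)} > 0,
\]
so $\mu(\check{G}) < b/a$ and $\check{G}$ lands in the left-hand category. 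Combined with the first paragraph, this shows $\Psi[1]$ and $\Phi$ restrict to mutually inverse equivalences between the two subcategories.

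I do not expect a genuine obstacle here. Once the WIT-class bookkeeping and the sign change of $(r,d)$ under the shift $[-1]$ are handled correctly, the entire content reduces to the two slope inequalities, both of which fall out of the single determinant identity $bc - ad = 1$. The one point requiring care is ensuring that positivity of rank is preserved under each transform: this is precisely where the slope bounds $\mu < b/a$ and $\mu > -c/a$ are used, and it is what guarantees the transforms are genuine positive-rank sheaves rather than torsion or zero.
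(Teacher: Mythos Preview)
Your proof is correct and follows essentially the same approach as the paper: both arguments reduce the equivalence to verifying the rank and slope inequalities via the formulas \eqref{eqn-rdunderPsi} and \eqref{eqn-rdunderPhi}. The only cosmetic difference is that the paper deduces $\mu(\hat E)>-c/a$ by reading off $r(E)=r(\Phi\hat E)=c\,r(\hat E)+a\,d(\hat E)>0$ directly from \eqref{eqn-rdunderPhi}, whereas you compute $d(\hat E)$ explicitly and invoke $bc-ad=1$; these are the same identity unpacked two different ways.
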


\begin{proof}
Take any nonzero $E$ in the left-hand side of \eqref{eq17}.    By \eqref{eqn-rdunderPsi},
\begin{equation}\label{eq18}
 r(\hat{E}) = -r(\Psi E) = b\cdot r(E) - a\cdot d(E),
\end{equation}
which is positive since $\mu (E) < b/a$.  Since $r(E)>0$, from \eqref{eqn-rdunderPhi} we have
\begin{equation*}
r(E) = r(\Phi \hat{E}) = c\cdot r(\hat{E}) + a\cdot d(\hat{E});
\end{equation*}
since $r(E)$ is positive, we obtain  $\mu (\hat{E})>-c/a$.  This shows that $\hat{E}$ lies in the category on the right-hand side of \eqref{eq17}.    The proof of the other direction is similar.
\end{proof}

\begin{lemma}\label{lemma-equiv1}
The functor $\Psi [1]$ restricts to an equivalence of categories
\begin{gather}\label{eq19}
  W_{1,X} \cap \Coh (X)_{r>0} \cap \Coh (X)_{\mu = b/a} \overset{\Psi [1]}{\to} W_{0,Y} \cap (\mathcal T_Y \setminus \mathcal B_Y).
\end{gather}
\end{lemma}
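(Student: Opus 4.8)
The plan is to show that $\Psi[1]$ maps the left-hand side of \eqref{eq19} into the right-hand side and that its inverse $\Phi[1]$ maps the right-hand side back, so that we obtain mutually inverse equivalences. Take a nonzero sheaf $E$ in $W_{1,X} \cap \Coh(X)_{r>0} \cap \Coh(X)_{\mu=b/a}$. First I would compute the numerical invariants of $\hat E$ using \eqref{eqn-rdunderPsi}: since $r(\hat E) = -r(\Psi E) = b\cdot r(E) - a\cdot d(E)$ and $\mu(E)=b/a$ forces $d(E) = (b/a)r(E)$, we get $r(\hat E)=0$. This is exactly the content of the remark following Lemma \ref{lemma-E3-WIT1torsion}: a $\Psi$-WIT sheaf of positive rank with $\mu=b/a$ has torsion transform, so $\hat E \in \mathcal T_Y$. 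Since $E$ is $\Psi$-WIT$_1$, $\hat E$ is $\Phi$-WIT$_0$, i.e.\ $\hat E \in W_{0,Y}$, and $\Psi(E[1]) = \Psi(E)[1] \cong \hat E$ sits in degree $0$ as a genuine sheaf.

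The main point to verify is that $\hat E \notin \mathcal B_Y$. Suppose for contradiction that $\hat E \in \mathcal B_Y$. Since $\hat E$ is a WIT sheaf, the remark after Lemma \ref{lemma17} (invoking \eqref{eqn-rdunderPhi} and \eqref{eqn-rdunderPsi}) gives $\hat E \in \mathcal B_Y$ if and only if $E \in \mathcal B_X$, i.e.\ $r(E)=d(E)=0$. But $r(E)>0$ by hypothesis, a contradiction. Hence $\hat E \in \mathcal T_Y \setminus \mathcal B_Y$, and combined with the previous paragraph, $\hat E \in W_{0,Y} \cap (\mathcal T_Y \setminus \mathcal B_Y)$. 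This establishes that $\Psi[1]$ lands in the right-hand side of \eqref{eq19}.

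For the reverse direction, take a nonzero $G \in W_{0,Y} \cap (\mathcal T_Y \setminus \mathcal B_Y)$ and set $\hat G := \Phi(G)$, which is $\Psi$-WIT$_1$ since $\Phi(W_{0,Y}) = W_{1,X}[0]$ up to the degree shift recorded in Section \ref{section-notation}. Here I would run the symmetric versions of the invariant computations using \eqref{eqn-rdunderPhi}: because $G$ is torsion, $r(G)=0$, so $r(\hat G) = r(\Phi G) = c\cdot r(G) + a\cdot d(G) = a\cdot d(G)$. Since $G \in \mathcal T_Y \setminus \mathcal B_Y$ we have $r(G)=0$ but $d(G)>0$ (as $G \notin \mathcal B_Y$ and fibre degrees of torsion sheaves are nonnegative by the observation preceding Lemma \ref{lemma17}), whence $r(\hat G) = a\cdot d(G) > 0$ using $a>0$. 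Then $\mu(\hat G)=b/a$ follows from $r(\Psi \hat G) = -r(G) = 0$ together with the remark after Lemma \ref{lemma-E3-WIT1torsion}, which characterizes $\mu(\hat G)=b/a$ by the vanishing of $r(\Psi \hat G)$ for positive-rank sheaves. So $\hat G$ lies in the left-hand side of \eqref{eq19}.

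Because $\Psi[1]$ and $\Phi[1]$ are inverse equivalences on the level of derived categories (Section \ref{section-notation}), and both send the indicated subcategories into one another, they restrict to mutually inverse equivalences between the two sides of \eqref{eq19}. The only delicate bookkeeping I anticipate is keeping the degree shifts consistent---confirming that for $E$ in the left-hand side, $\Psi(E)$ is concentrated in degree $1$ so that $\hat E$ is an honest sheaf, and matching this against the relations $\Psi(W_{1,X}) = W_{0,Y}[-1]$ and $\Phi(W_{0,Y}) = W_{1,X}$---but this is routine given the WIT formalism already set up. The genuinely substantive step is the translation $\hat E \in \mathcal B_Y \iff E \in \mathcal B_X$, which cleanly pins down why the exceptional locus $\mathcal B_Y$ must be excised on the target side.
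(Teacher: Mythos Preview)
Your proposal is correct and follows essentially the same approach as the paper: both directions are handled by tracking $r$ and $d$ through the formulas \eqref{eqn-rdunderPhi}, \eqref{eqn-rdunderPsi} together with the equivalence $E\in\mathcal B_X\iff\hat E\in\mathcal B_Y$ for WIT sheaves. The only minor difference is in the reverse direction: where the paper argues by contradiction (if $\hat G$ were torsion it would be $\Psi$-WIT$_1$ torsion, hence in $\mathcal B_X$ by Lemma~\ref{lemma-E3-WIT1torsion}, forcing $G\in\mathcal B_Y$), you compute $r(\hat G)=a\cdot d(G)>0$ directly from $d(G)>0$---a slightly more efficient route to the same conclusion.
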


\begin{proof}
Take any nonzero $E$ belonging to the left-hand side of \eqref{eq19}.  From \eqref{eq18} and $\mu (E)=b/a$, we get $r(\hat{E})=0$, i.e.\ $\hat{E}$ is a torsion sheaf.  That $E$ is not in $\mathcal B_X$ implies $\hat{E}$ is not in $\mathcal B_Y$.  Hence $\hat{E}$ lies in the category on the right-hand side of \eqref{eq19}.

For the other direction, take any nonzero $E$ from the right-hand side of \eqref{eq19}.  From \eqref{eqn-rdunderPsi}, we have
\begin{equation}\label{eqn-lemma-equiv1}
0=r(E)=-r(\Psi \hat{E})=b\cdot r(\hat{E})-a\cdot d(\hat{E}).
  \end{equation}
  Note that $\hat{E}$ cannot be a torsion sheaf, for if it were,  it would be a $\Psi$-WIT$_1$ torsion sheaf, and hence lies in $\mathcal B_X$  by Lemma \ref{lemma-E3-WIT1torsion}.  Then $E$ itself would be in $\mathcal B_Y$, a contradiction.  Hence $r(\hat{E})>0$, and \eqref{eqn-lemma-equiv1} gives $\mu (\hat{E}) = b/a$.
\end{proof}

We have observed that  a $\Psi$-WIT$_1$ torsion sheaf on $X$ lies in $\mathcal B_X$ (Lemma \ref{lemma-E3-WIT1torsion}), and so its transform is necessarily in $\mathcal B_Y$.  This, together with Lemmas \ref{lemma-equiv2} and \ref{lemma-equiv1}, gives a complete description of the transforms  of all coherent sheaves in $W_{1,X}$ under $\Psi [1]$.

\begin{remark}\label{remark5}
Suppose $F$ is a coherent sheaf on $X$ satisfying:
 \begin{itemize}
 \item[(G)] $F$ is torsion-free, $\Psi$-WIT$_1$ and $\hat{F}$ restricts to a torsion-free sheaf on the generic fibre of $\hat{\pi}$;
 \end{itemize}
then $r(\hat{F})$ must be positive, and any torsion subsheaf of $\hat{F}$ must restrict to zero on the generic fibre, i.e.\ any torsion subsheaf of $\hat{F}$ lies in $\mathcal{B}_Y$.  Examples of sheaves satisfying property (G) above include torsion-free sheaves $F$ on $X$ with $\mu (F) < b/a$ such that $F$ restricts to a stable sheaf on the generic fibre of $\pi$: for such a sheaf $F$, it is $\Psi$-WIT$_1$ by Lemma \ref{lemma-FMTesLemma6-4}.  By \cite[Lemma 9.5]{BMef} and Lemma \ref{lemma-equiv1}, we deduce that $\hat{F}$ must restrict to a stable torsion-free sheaf on the generic fibre of $\hat{\pi}$.
\end{remark}

Combining Lemmas \ref{lemma-equiv3} and Remark \ref{remark5}, we obtain a criterion under which certain $\Psi$-WIT$_1$ torsion-free sheaves have torsion-free transforms:

\begin{theorem}\label{pro5}
Suppose $F$ is a coherent sheaf on $X$ satisfying property (G).  Then $\hat{F}$ is a torsion-free sheaf if and only if
\begin{equation}\label{eq20}
\Ext^1 (\mathcal B_X \cap W_{0,X}, F)=0.
\end{equation}
\end{theorem}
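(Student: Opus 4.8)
The plan is to prove the equivalence by reducing it to the already-established results on the Fourier-Mukai partner $Y$, using that the hypothesis (G) guarantees $F$ is $\Psi$-WIT$_1$, so that its transform $\hat F$ is a genuine sheaf on $Y$ and the symmetric machinery applies. First I would set $F' := \hat F$, the transform of $F$, which by property (G) is a $\Phi$-WIT$_0$ sheaf on $Y$ (since $F$ is $\Psi$-WIT$_1$, we have $\Psi(F) \cong \hat F[-1]$, and applying $\Phi$ recovers $F$, exhibiting $\hat F$ as $\Phi$-WIT$_0$). Then $F = \widehat{F'}$ in the notation of Lemma \ref{lemma15}, with the roles of $X$ and $Y$ interchanged via the symmetry noted in Section \ref{section-notation}. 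The goal statement ``$\hat F$ is torsion-free'' must be reconciled with this: I am asserting that $F$ being torsion-free is automatic by (G), and the content is about the torsion-freeness of $\hat F$, so I would apply the $X \leftrightarrow Y$ swapped version of Lemma \ref{lemma15} directly to the $\Phi$-WIT$_0$ sheaf $\hat F$ on $Y$.

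The cleanest route, which I would take, is simply to invoke Lemma \ref{lemma-equiv3}. That lemma states that $\Psi[1]$ restricts to an equivalence from
\[
\mathcal F_X \cap \{ E \in \Coh(X) : \Ext^1(\mathcal B_X \cap W_{0,X}, E)=0\} \cap W_{1,X}
\]
onto $\mathcal B_Y^\circ \cap W_{0,Y}$. The heart of the argument is then a bookkeeping check: a sheaf $F$ satisfying property (G) is torsion-free and $\Psi$-WIT$_1$, so $F \in \mathcal F_X \cap W_{1,X}$ automatically. Thus the only remaining membership condition characterizing the left-hand side of \eqref{eq16} is exactly the vanishing \eqref{eq20}. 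Under the equivalence, the left-hand side maps to $\mathcal B_Y^\circ \cap W_{0,Y}$, and $\hat F \in \mathcal B_Y^\circ$ is precisely the condition that makes $\hat F$ torsion-free via Lemma \ref{lemma15} (in its $X\leftrightarrow Y$ form). So I would argue: \eqref{eq20} holds $\iff F$ lies in the source of \eqref{eq16} $\iff \hat F \in \mathcal B_Y^\circ \cap W_{0,Y} \iff \Hom(\mathcal B_Y, \hat F) = 0 \iff \hat F$ is torsion-free, where the last equivalence is Lemma \ref{lemma15}.

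The one subtlety I would need to handle carefully — and the step I expect to be the main obstacle — is the logical direction and the role of property (G) versus the bare conditions in Lemma \ref{lemma-equiv3}. Property (G) asserts not only that $F$ is torsion-free and $\Psi$-WIT$_1$, but also that $\hat F$ restricts to a torsion-free sheaf on the generic fibre, which by Remark \ref{remark5} forces $r(\hat F) > 0$ and, crucially, that any torsion subsheaf of $\hat F$ lies in $\mathcal B_Y$. This last fact is what connects ``$\hat F$ torsion-free'' (the genuine statement) to ``$\Hom(\mathcal B_Y, \hat F) = 0$'': without (G), the maximal torsion subsheaf of $\hat F$ could a priori have positive fibre degree and escape $\mathcal B_Y$, breaking the equivalence between torsion-freeness and the $\mathcal B_Y^\circ$ condition. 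So I would explicitly cite Remark \ref{remark5} to reduce ``$\hat F$ is torsion-free'' to ``the $\mathcal B_Y$-torsion part of $\hat F$ vanishes,'' i.e. to $\hat F \in \mathcal B_Y^\circ$, and only then apply the chain of equivalences above. With that reduction in place the proof is a one-line consequence of Lemmas \ref{lemma-equiv3}, \ref{lemma15} and Remark \ref{remark5}.
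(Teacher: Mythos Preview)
Your proposal is correct and follows essentially the same route as the paper, which simply states that the theorem follows by combining Lemma~\ref{lemma-equiv3} with Remark~\ref{remark5}. One small cleanup: the equivalence ``$\hat F \in \mathcal B_Y^\circ \iff \hat F$ is torsion-free'' is \emph{not} Lemma~\ref{lemma15} (that lemma concerns torsion-freeness of the transform back to $X$, with a condition on $\mathcal B_Y \cap W_{0,Y}$ only); rather, as you yourself note in your final paragraph, it is exactly the content of Remark~\ref{remark5} under hypothesis~(G), so you should cite only Remark~\ref{remark5} there.
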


\begin{remark}\label{remark2}
On an elliptic surface $X$,  for a Chern character $(ch_0,ch_1,ch_2)=(r,\delta,n)$ where $r>0$ (i.e.\ rank) and $\delta f$ (i.e.\ fibre degree) are coprime, there is a polarisation with respect to which a torsion-free sheaf $F$ with Chern character $(r,\delta,n)$ on $X$ is $\mu$-stable if and only if its restriction to the generic fibre of $\pi$ is stable \cite[Proposition 7.1]{FMTes}.  With respect to such a polarisation, let $\MM$ denote the moduli space of stable torsion-free sheaves of Chern character $(r,\delta,n)$ on $X$.  Suppose $d := \delta f$ and $c := r$ in \eqref{eqn-rdunderPhi}.  Suppose, in addition, that $X$ is a relatively minimal elliptic surface and $a,b$ are the unique integers satisfying $br-ad=1$ and $0<a<r$.  We can consider the open subscheme
\[
  U := \{ F \in \MM : \hat{F} \text{ is torsion-free}\}
\]
of $\MM$.  By our choice of the polarisation on $X$ and our assumption on $a,b,r,d$, we have $d/r < b/a$; therefore, by \cite[Lemma 6.4]{FMTes}, every sheaf $F$ in $U$ is $\Psi$-WIT$_1$.  And in \cite[Section 7.2]{FMTes}, Bridgeland describes an open subscheme $V$ of $\text{Pic}^\circ (Y) \times \text{Hilb}^t (Y)$ such that $\Psi [1]$ takes $U$ isomorphically onto $V$.  This gives a birational equivalence between $\MM$ and $\text{Pic}^\circ (Y) \times \text{Hilb}^t (Y)$, which is the statement of \cite[Theorem 1.1]{FMTes}.  Theorem \ref{pro5} now allows us to describe  $U$  more directly, as the locus of all $F \in \MM$ satisfying the vanishing condition
\[
\Ext^1 (\mathcal B_X \cap W_{0,X}, F)=0.
\]
Moreover, by Lemma \ref{lemma11}, every $F \in U$ is a locally free sheaf.
\end{remark}

%
%
%
%

The following theorem gives a whole class of sheaves for which  the vanishing condition \eqref{eq20} in Theorem \ref{pro5} holds:

\begin{theorem}\label{theorem-main0}
Suppose  $\pi : X \to S$ is an elliptic threefold where all the fibres are Cohen-Macaulay curves with trivial dualising sheaves.  If $F$ is a $\Psi$-WIT$_1$ reflexive sheaf on $X$, then $F$ satisfies
\begin{equation*}
\Ext^1 (\mathcal B_X \cap W_{0,X}, F)=0.
\end{equation*}
\end{theorem}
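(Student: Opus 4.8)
The plan is to transport the problem across the Fourier--Mukai transform and reduce the required vanishing to the single claim that $\hat F$ is torsion-free. Since $T\in\mathcal B_X\cap W_{0,X}$ is $\Psi$-WIT$_0$ we have $\Psi(T)=\hat T$ with $\hat T\in\mathcal B_Y\cap W_{1,Y}$, while $\Psi(F[1])\cong\hat F$ because $F$ is $\Psi$-WIT$_1$; as $\Psi$ is an equivalence this gives
\[
  \Ext^1_{D(X)}(T,F)\;\cong\;\Hom_Y(\hat T,\hat F).
\]
Now $\hat T$ has rank $0$, hence is torsion, so any morphism $\hat T\to\hat F$ has torsion image; thus if $\hat F$ is torsion-free then $\Hom_Y(\hat T,\hat F)=0$ for every such $T$, which is exactly the theorem. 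Moreover, Lemma \ref{lemma15} applied to the $\Phi$-WIT$_0$ sheaf $\hat F$ (whose transform is the torsion-free sheaf $F$) already yields $\Hom(\mathcal B_Y\cap W_{0,Y},\hat F)=0$, so what remains is to rule out \emph{vertical} torsion in $\hat F$, i.e.\ a nonzero subsheaf lying in $\mathcal B_Y\cap W_{1,Y}$.

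It helps to see where the difficulty really lies. If $\supp T$ had codimension $\ge 2$, reflexivity of $F$ alone would finish: on the smooth threefold $X$ a reflexive sheaf satisfies Serre's condition $S_2$, so $\operatorname{depth}_xF\ge 2$ at every point of codimension $\ge 2$, and a standard depth estimate then forces $\Ext^i(T,F)=0$ for $i\le 1$ whenever $\supp T$ has codimension $\ge 2$. Splitting $T$ by dimension of support, $0\to T'\to T\to T''\to 0$ with $T'\in\Coh_{\leq 1}(X)$ and $T''$ pure of dimension $2$, thus reduces the theorem to a pure two-dimensional $T$ supported on $\pi^{-1}(Z_0)$ for a curve $Z_0\subsetneq S$. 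There depth gives only $\operatorname{depth}_xF\ge 1$, the vanishing is no longer formal, and this is exactly where the hypothesis on the fibres must enter.

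That hypothesis---all fibres Cohen--Macaulay with trivial dualising sheaf---makes $\pi$ a Gorenstein morphism of relative dimension one with $\pi^{!}\mathcal O_S\cong\omega_{X/S}[1]$ and $\omega_{X/S}|_{X_s}\cong\mathcal O_{X_s}$, so that relative Grothendieck--Serre duality is available and the kernel $\mathcal Q=\mathcal P^\vee\otimes\pi_X^\ast\omega_X[n-1]$ intertwines the derived duality functors on $X$ and $Y$. The key claim that $\hat F$ is torsion-free should follow by transporting the reflexivity of $F$ across this compatibility: $F\cong F^{\vee\vee}$ together with $\EExt^{\ge1}(F,\mathcal O_X)$ being supported in dimension $0$ is carried to the absence of a torsion subsheaf in $\hat F$. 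Fibrewise this is plausible, since $F|_{X_s}$ is $\Psi$-WIT$_1$ of slope $\le b/a$ and locally free on a general fibre, whereas a torsion subsheaf $I\subseteq\hat F$ in $\mathcal B_Y\cap W_{1,Y}$ transforms under $\Phi$ into a short exact sequence $0\to F\to\widehat C\to\hat I\to 0$ whose quotient $\hat I\in\mathcal B_X\cap W_{0,X}$ has fibre-slope $>b/a$ by Lemma \ref{lemma-PsiWIT0posrk}; the duality comparison should forbid such a positive WIT$_0$ quotient of a sheaf containing the reflexive $F$.

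Concretely I would argue as follows. Dualising a length-one locally free resolution of the reflexive sheaf $F^\vee$ gives a short exact sequence $0\to F\to V\to K\to 0$ with $V$ locally free and $K$ torsion-free. Applying $\Psi$ and reading off the long exact sequence of the cohomology sheaves $\Psi^0,\Psi^1$ expresses $\hat F$ as an extension built from $\Psi^0(K)$ and a subsheaf of $\Psi^1(V)$; relative duality and the fibrewise slope inequality above are then used to show that the only torsion which could appear is annihilated. The main obstacle is precisely this last step: $V$ is in general not WIT-pure, so one must control $\Psi(V)$ together with the contribution of the finite locus where $F$ fails to be locally free---that is, make the duality comparison rigorous across the special fibres. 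Once $\hat F$ is shown torsion-free, the first paragraph yields $\Ext^1(\mathcal B_X\cap W_{0,X},F)\cong\Hom_Y(\mathcal B_Y\cap W_{1,Y},\hat F)=0$, the desired vanishing.
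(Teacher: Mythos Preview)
Your reduction in the first paragraph is correct but does not simplify the problem: by Lemma~\ref{lemma-equiv3}, for a torsion-free $\Psi$-WIT$_1$ sheaf $F$, the vanishing $\Ext^1(\mathcal B_X\cap W_{0,X},F)=0$ is \emph{equivalent} to $\hat F\in\mathcal B_Y^\circ$, and since $\mathcal F_Y\subset\mathcal B_Y^\circ$, proving $\hat F$ torsion-free is at least as hard as the theorem itself. Your second paragraph, disposing of $T$ with $\dim\supp T\le 1$ via depth/homological-dimension arguments, is fine and matches what the paper does after Serre duality (the case $\dim A\le 1$ in Step~1).

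The genuine gap is the two-dimensional case. Your third and fourth paragraphs are only a sketch, and you yourself flag the main obstacle as unresolved. There is also a concrete error: you invoke Lemma~\ref{lemma-PsiWIT0posrk} for $\hat I\in\mathcal B_X\cap W_{0,X}$, but that lemma requires positive rank, whereas $\hat I\in\mathcal B_X$ forces $r(\hat I)=0$, so no slope inequality is available. The proposed route through a locally free resolution $0\to F\to V\to K\to 0$ and relative duality does not produce the needed control over $\Psi(V)$ and the singular fibres, and nothing in your outline uses the hypothesis on the fibres in an essential way.

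The paper's argument for the two-dimensional case is quite different and stays entirely on $X$. One Serre-dualises $\Ext^1(A,F)\cong\Ext^2(F,A\otimes\omega_X)$; reflexivity (homological dimension $\le 1$, locally free off a $0$-dimensional set) gives a surjection $H^2(X,\HHom(F,A\otimes\omega_X))\twoheadrightarrow\Ext^2(F,A\otimes\omega_X)$. After reducing to $A$ supported on $\pi^{-1}(C)$ for a reduced curve $C\subset S$, the Leray spectral sequence for $\pi|_{\pi^{-1}(C)}$ reduces the vanishing to $H^1(X_s,\bar A|_s)=0$ for generic $s\in C$. This is precisely where the hypothesis enters: Serre duality on the Cohen--Macaulay fibre $X_s$ with trivial dualising sheaf turns this into $\Hom_{X_s}(A|_s,F|_s)$, and base change shows $A|_s$ is $\Psi_s$-WIT$_0$ while $F|_s$ is $\Psi_s$-WIT$_1$ (using that $F$ is locally free away from finitely many fibres), so the Hom vanishes. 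This fibrewise WIT-index comparison is the missing idea in your attempt.
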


\begin{remark}\label{remark4}
The reader would notice that, in the proof of Theorem \ref{theorem-main0} below, instead of assuming that $F$ is reflexive (besides being $\Psi$-WIT$_1$), it suffices to assume $F$ satisfies the following two properties:
\begin{itemize}
\item[(i)] The existence of a surjection \eqref{eq24}.
\item[(ii)] $F$ is locally free outside a codimension-3 locus on $X$.
\end{itemize}
Let us also denote
\begin{itemize}
\item[(i')] $F$ has homological dimension at most 1.
\end{itemize}
Then properties (i') and (ii) together imply property (i); this can be deduced from the spectral sequence
\begin{equation}\label{eq25}
  E_2^{p,q} = H^p (X, \EExt^q (G_1,G_2)) \Rightarrow \Ext^{p+q}(G_1,G_2)
\end{equation}
for coherent sheaves $G_1, G_2$ on $X$.  Also, for a torsion-free sheaf $F$ on a smooth projective threefold $X$, conditions (i') and (ii) together turn out to be equivalent to $F$ being reflexive.  To see this, suppose $F$ satisfies conditions (i') and (ii).  Since $F$ is torsion-free, its codimension is 0.  Since $F$ is assumed to have homological dimension at most 1, we have $\EExt^i (F,\omega_X)=0$ for $i \neq 0, 1$.  Moreover, the codimension of $\EExt^1 (F,\omega_X)$ is exactly 3.  Hence $F$ satisfies condition $S_{2,0}$ in the sense of Huybrechts \cite[Proposition 1.1.6]{HL}.  Finally, by \cite[Proposition 1.1.10]{HL}, condition $S_{2,0}$ is equivalent to reflexivity.  That the reflexivity of $F$ implies properties (i') and (ii) on a smooth projective threefold is well-known.
\end{remark}


Since Gorenstein varieties are exactly Cohen-Macaulay varieties whose dualising sheaves are  line bundles, all the fibres of $\pi$ in Theorem \ref{theorem-main0} are Gorenstein curves.

\begin{proof}[Proof of Theorem \ref{theorem-main0}]
We divide the proof into five steps.
\bigskip
\paragraph{Step 1.} We need to show that $\Ext^1 (A,F)$ vanishes for any $A \in \mathcal B_X \cap W_{0,X}$.  By Serre duality, we have $\Ext^1 (A,F) \cong \Ext^2 (F,A \otimes \omega_X)$.  Since $F$ is a reflexive sheaf on a threefold, we have a surjection (see \cite[Proposition 5]{Ver}, for instance):
 \begin{equation}\label{eq24}
  H^2 (X,\EExt^0 (F,A\otimes \omega_X)) \twoheadrightarrow \Ext^2 (F,A\otimes \omega_X).
 \end{equation}
 Therefore, it suffices to show that $H^2 (X,\EExt^0 (F,A \otimes \omega_X))$, i.e.\ $H^2 (X,\HHom (F,A \otimes \omega_X))$  vanishes.

  If the dimension of $A$ is at most 1, then $\HHom (F,A \otimes \omega_X)$ also has dimension at most 1, and  $H^2 (X, \HHom (F,A \otimes \omega_X))$  vanishes.  From now on, we assume that $A$ is supported in dimension 2.
\bigskip
\paragraph{Step 2.} We claim that it suffices to show the vanishing of $\Ext^1 (A,F)$ for any $A \in \mathcal B_X \cap W_{0,X}$ where the support of $\pi_\ast A$ is a reduced scheme: observe that $\Ext^1 (A,F)=0$ is equivalent to $\Hom (\hat{A},\hat{F})=0$, where $\hat{A} \in \mathcal B_Y \cap W_{1,Y}$.  Suppose $\supp{(\hat{A})}$ is not reduced.  Then there is some ideal sheaf $\II$ of $\OO_Y$ such that, if we write $D'_m$ to denote the closed subscheme of $Y$ defined by the ideal sheaf $\II^m$, then $D'_1$ is reduced, and $\supp{(\hat{A})}$ is contained in $D'_n$ for some positive integer $n$.  Now we perform induction on $n$.

Note that  $\hat{A}$ fits in a short exact sequence of coherent sheaves on $Y$
\begin{equation*}
0 \to K \to \hat{A} \to \hat{A}|_{D'_{1}} \to 0
\end{equation*}
where $\hat{A}|_{D'_{1}}$ also lies in $\mathcal B_Y$, while $K$ lies in $\mathcal B_Y \cap W_{1,Y}$ and is supported on $D'_{n-1}$.  By induction, $\Ext^1 (A,F)=0$ will follow from  the following two things:
\begin{itemize}
\item[(i)] $\Hom (\hat{A}|_{D_1'},\hat{F})=0$, and
\item[(ii)] $\Hom (\hat{A},\hat{F})=0$ when $n=1$ (the induction hypothesis).
\end{itemize}
  Note further that $\hat{A}|_{D'_1}$ itself fit in a short exact sequence of coherent sheaves
  \[
    0 \to A_0 \to \hat{A}|_{D'_1} \to A_1 \to 0
  \]
  where $A_i \in \mathcal B_Y \cap W_{i,Y}$ for $i=0,1$, and both $A_0, A_1$ are supported on $D_1'$.  Since $\hat{A_0}$ is a torsion sheaf, we have $\Hom (A_0,\hat{F}) \cong \Hom (\hat{A_0},F)=0$.  Hence (i) will follow from (ii), the induction hypothesis.  In other words, we can assume that $\hat{A}$ is supported on a reduced scheme.  Write $D' := \text{supp}(\hat{A})$.  Then the morphism $D' \to C := \text{supp}(\pi_\ast \hat{A})$ induced by $\hat{\pi} : Y \to S$ factors through the closed immersion $C_{red} \hookrightarrow C$.  Hence the support of $A$ itself is contained in the closed subscheme $X \times_S C_{red}$ of $X$.  And so, overall, to complete the proof of this theorem, we can assume that $A$ is supported on a 2-dimensional subscheme $D$ of $X$   (but the support of $A$ may not exactly be $D$) that fits in a fibre square
  \[
  \xymatrix{
   D \arinj[r]  \ar[d]^{\pi} & X \ar[d]^{\pi} \\
   C \arinj[r] & S
  }
  \]
  where we can assume that $C$ is a 1-dimensional reduced scheme, and we also write $\pi$ to denote the pullback morphism $D \to C$ by abuse of notation.

  \bigskip
\paragraph{Step 3.} To being with, note that $\pi : D \to C$ is both projective and flat (since $\pi : X \to S$ is so).
Now, we have
\begin{equation*}
 H^2 (X,\HHom (F,A \otimes \omega_X)) \cong H^2 (D,\bar{A})
\end{equation*}
where $\bar{A}$ is some coherent sheaf on $D$ such that $\iota_\ast \bar{A} = \HHom (F,A \otimes \omega_X)$.

The Leray spectral sequence applied to $\pi : D \to C$ gives us
\[
  E_2^{p,q} = H^p(C,R^q\pi_\ast (\bar{A})) \Rightarrow H^{p+q}(D,\bar{A}).
\]
Since all the fibres of $\pi$ are 1-dimensional, $R^q \pi_\ast (\bar{A})=0$ for all $q \neq 0, 1$ by \cite[Corollary III 11.2]{Harts}.  On the other hand, since $C$ is 1-dimensional, $E_2^{p,q}$ vanishes for $p \neq 0, 1$.  Hence $H^1(C,R^1\pi_\ast (\bar{A})) \cong H^2 (D,\bar{A})$, and it suffices for us to show that $H^1(C,R^1\pi_\ast (\bar{A}))$ vanishes.  Furthermore, it suffices to show that $R^1 \pi_\ast (\bar{A})$ is supported at a finite number of points.  That is, it suffices to show:
 \begin{equation}\label{eq23}
 \text{for a general closed point $s \in C$, we have } R^1 \pi_\ast (\bar{A}) \otimes k(s) =0.
 \end{equation}
Since $C$ is reduced, we can apply generic flatness \cite[Proposition 052B]{stacks}, and see that $\bar{A}$ is flat over an open dense subscheme of $C$.  Now, let $s \in C$ be a general closed point, $g$ be the fibre $\pi^{-1}(s)$, and $\bar{A}|_s$ be the (underived) restriction of $\bar{A}$ to the fibre $g$ over $s$.  By cohomology and base change \cite[Theorem III 12.11]{Harts}, we have
\[
R^1 \pi_\ast (\bar{A}) \otimes k(s) \cong H^1 (g,\bar{A} |_s).
\]
 The theorem would be proved if we can show that $H^1(g,\bar{A}|_s)=0$.
\bigskip
\paragraph{Step 4.} By our assumptions, the fibre $g := \pi^{-1}(s)$ is a projective Cohen-Macaulay curve with trivial dualising sheaf.  Therefore,
\begin{equation}\label{eqn-usingSDonCM}
  H^1(g,\bar{A}|_s) \cong \Ext^1_g (\OO_g, \bar{A}|_s) \cong \Hom_g (\bar{A}|_s, \OO_g)
\end{equation}
where the second isomorphism  follows from Serre duality.

Now, write $\hat{D} := C \times_S Y$.  Then we have a commutative diagram
\[
\xymatrix{
  D  \ar[drr] \ar[ddr] & \hat{D} \ar[drr] \ar[dd]& & \\
  & & X \ar[ddr]  & Y \ar[dd] \\
  & C \ar[drr] & & \\
  & & & S
}
\]
where the arrow $C \to S$ is a closed immersion, and the arrows $D \to X$ and $\hat{D} \to Y$ are its pullbacks.  Let us write $\iota$ to denote either the closed immersion $D \to X$ or $\hat{D} \to Y$.  Then $A = \iota_\ast \tilde{A}$ for some $\tilde{A}$ supported on $D$.  By the base change formula (see \cite[Proposition A.85]{FMNT} and also \cite[(6.3)]{FMNT}), we have $\Psi (\iota_\ast \tilde{A})=\iota_\ast\Psi_C (\tilde{A})$, which is a sheaf sitting at degree 0 since  $A$ is $\Psi$-WIT$_0$.  Here, $\Psi_C$ denotes the induced relative Fourier-Mukai transform from the derived category of $D$ to that of $\hat{D}$ over $C$.  And so  $\tilde{A}$ itself is a $\Psi_C$-WIT$_0$ sheaf on $D$.  Also, for a general closed point $s \in C$, we have $\tilde{A}|^L_s \cong \tilde{A}|_s$ by generic flatness, i.e.\ there is no need to derive the restriction; thus $\tilde{A}|^L_s \cong A|_s$ for a general closed point $s \in C$.

Since $\pi : D \to C$ is flat,  we have the isomorphism
\begin{equation}\label{eqn6}
 \Psi_s (\tilde{A}|_s^L) \cong (\Psi_C \tilde{A}) |^L_s
\end{equation}
by base change \cite[Proposition 6.1]{FMNT}; here, $\Psi_s$ denotes the induced Fourier-Mukai transform on the fibres $D(X_s) \to D(Y_s)$.

Putting all these together, we get, for a general closed point $s \in C$,
\[
 \Psi_s (A|_s) \cong \Psi_s (\tilde{A}|^L_s) \cong \hat{\tilde{A}}|^L_s \cong \hat{\tilde{A}} |_s
\]
where the last isomorphism follows from generic flatness.  Thus we see that, $A|_s$ is $\Psi_s$-WIT$_0$ for a general closed point $s \in C$.
\bigskip
\paragraph{Step 5.} Since $F$ is reflexive, it is locally free outside a 0-dimensional closed subset $Z$ of $X$. Let $\bar{V}$ denote the open subscheme $S \setminus \pi (Z)$ of $S$, and write $V := X \times_S \bar{V}$ and $\hat{V} := Y \times_S \bar{V}$.  Then $F$ is flat over $V$, and
\[
\Psi_{\bar{V}}(F|_V) \cong \Psi_{\bar{V}} (F|^L_V) \cong (\Psi F)|^L_{\hat{V}} \cong \hat{F}[-1] |_{\hat{V}},
\]
where we apply base change in the second isomorphism.  Thus $\hat{F}|_{\hat{V}}$ is $\Phi_{\bar{V}}$-WIT$_0$.

Now that we know $\hat{F}|_{\hat{V}}$ is $\Phi_{\bar{V}}$-WIT$_0$ and $\Phi_{\bar{V}}^0 (\hat{F}|_{\hat{V}}) \cong F|_V$ is flat over $\bar{V}$, we can apply \cite[Corollary 6.2]{FMNT} to obtain that $\hat{F}|_s^L$ is $\Phi_s$-WIT$_0$ for all $s \in \bar{V}$.  Since $\hat{F}|_{\hat{V}}$ is generically flat over $C \cap \bar{V}$ (which is an open dense subset of $C$), for a general closed point $s \in C$ we have $\hat{F}|_s^L \cong \hat{F}|_s$.  Therefore, for a general closed point $s \in C$, we have that $\hat{F}|_s$ is $\Phi_s$-WIT$_0$, and so $F|_s$ is $\Psi_s$-WIT$_1$.


Overall, for a general closed point $s \in C$, we have
\begin{align*}
  \Hom_g (\bar{A}|_s, \OO_g) & = \Hom_g ( \HHom (F,A \otimes \omega_X)|_s, \OO_g) \\
  &\cong \Hom_g (A|_s, F|_s),
\end{align*}
which must vanish  since $A|_s$ is $\Psi_s$-WIT$_0$ and $F|_s$ is $\Psi_s$-WIT$_1$.   This completes the proof of the theorem.
\end{proof}

Theorem \ref{theorem-main0} now gives rise to the following:

\begin{coro}\label{coro1}
Suppose $\pi : X \to S$ is an elliptic threefold where all the fibres are Cohen-Macaulay curves with trivial dualising sheaves.  Then, for any reflexive sheaf $F$ on $X$ with $\mu (F) < b/a$ such that its restriction to the generic fibre of $\pi$ is stable,  we have $F$ is $\Psi$-WIT$_1$, and $\hat{F}$ is torsion-free and stable with respect to some polarisation on $Y$.
\end{coro}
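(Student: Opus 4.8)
The plan is to assemble the corollary entirely from the preceding results, recognising $F$ as an instance of the sheaves discussed in Remark \ref{remark5}. First I would note that since $F$ is reflexive it is in particular torsion-free, and since $\mu(F) < b/a$ with $F$ restricting to a stable sheaf on the generic fibre of $\pi$, Lemma \ref{lemma-FMTesLemma6-4} immediately gives that $F$ is $\Psi$-WIT$_1$. This settles the first assertion with no further work.

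Next I would verify that $F$ satisfies property (G). Its three requirements are that $F$ be torsion-free, $\Psi$-WIT$_1$, and that $\hat{F}$ restrict to a torsion-free sheaf on the generic fibre of $\hat{\pi}$. The first two have just been established, and Remark \ref{remark5} records precisely that torsion-free sheaves $F$ with $\mu(F) < b/a$ restricting to a stable sheaf on the generic fibre of $\pi$ satisfy (G); indeed, by \cite[Lemma 9.5]{BMef} together with Lemma \ref{lemma-equiv1}, the transform $\hat{F}$ restricts to a \emph{stable}, hence torsion-free, sheaf on the generic fibre of $\hat{\pi}$. I would record this stable generic-fibre restriction explicitly, as it is reused in the last step.

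With property (G) in hand, torsion-freeness of $\hat{F}$ follows from the criterion of Theorem \ref{pro5}, provided I can check the vanishing $\Ext^1(\mathcal{B}_X \cap W_{0,X}, F) = 0$. But this is exactly the conclusion of Theorem \ref{theorem-main0}: the hypotheses there, namely that the fibres of $\pi$ are Cohen-Macaulay curves with trivial dualising sheaves and that $F$ is a $\Psi$-WIT$_1$ reflexive sheaf, are met. Combining Theorems \ref{theorem-main0} and \ref{pro5} therefore yields that $\hat{F}$ is torsion-free, giving the second assertion.

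Finally, for stability I would invoke Step 4 of the Bridgeland--Maciocia strategy recalled in Section \ref{section-BMefsum}, that is \cite[Lemma 2.1]{BMef}: a torsion-free sheaf on $Y$ whose restriction to the generic fibre of $\hat{\pi}$ is stable is itself stable with respect to a suitable polarisation on $Y$. We have just shown $\hat{F}$ is torsion-free, and its generic-fibre restriction is stable by the discussion in Remark \ref{remark5}; hence $\hat{F}$ is stable for an appropriate polarisation, completing the proof. Since every ingredient is drawn from an already established statement, there is no genuine obstacle here: the only point demanding care is the bookkeeping that confirms the hypotheses of Theorems \ref{theorem-main0} and \ref{pro5} and of \cite[Lemma 2.1]{BMef} all hold simultaneously for the single sheaf $F$, the crucial enabling input being the reflexivity assumption, which is what feeds Theorem \ref{theorem-main0}.
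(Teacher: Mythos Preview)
Your proof is correct and follows essentially the same route as the paper's own argument: Lemma~\ref{lemma-FMTesLemma6-4} for $\Psi$-WIT$_1$, then Theorem~\ref{theorem-main0} combined with Theorem~\ref{pro5} (via property~(G) from Remark~\ref{remark5}) for torsion-freeness, and finally \cite[Lemma~9.5]{BMef} and \cite[Lemma~2.1]{BMef} for stability. If anything, you are slightly more explicit than the paper in spelling out the verification of property~(G) before invoking Theorem~\ref{pro5}; the paper compresses this step, citing Lemma~\ref{lemma-equiv2} to note $\hat{F}$ has nonzero rank and leaving the rest of~(G) implicit.
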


\begin{proof}
Take any reflexive sheaf $F$ as described.  That $F$ is $\Psi$-WIT$_1$ follows from Lemma \ref{lemma-FMTesLemma6-4}.  By Lemma \ref{lemma-equiv2}, $\hat{F}$ has nonzero rank, and so by Theorem \ref{pro5} and Theorem \ref{theorem-main0},  $\hat{F}$ is torsion-free.  That $\hat{F}$ is stable on $Y$ with respect to a suitable polarisation follows from \cite[Lemma 9.5]{BMef} (which also works for WIT$_1$ sheaves) and \cite[Lemma 2.1]{BMef}.
\end{proof}

C\u{a}ld\u{a}raru has a result that is somewhat similar: in \cite[Theorem 2]{Caldararu}, he shows that  for elliptic threefolds with relative Picard number 1, the Fourier-Mukai transform $\Psi$ takes fiberwise stable locally free sheaves with relatively prime degree and rank to fibrewise stable locally free sheaves.

The following lemma gives examples of WIT$_1$ torsion-free sheaves on $X$ whose transforms are not torsion-free:

\begin{lemma}\label{lemma23}
Suppose $\pi : X \to S$ is either an elliptic surface or an elliptic threefold.  If $Z \subset X$ is a 0-dimensional subscheme, and $I_Z$ its ideal sheaf, then for any line bundle $L$ on $X$ with $d(L)< b/a$, the sheaf $I_Z \otimes L$ is $\Psi$-WIT$_1$, and its transform $\widehat{I_Z \otimes L}$ has a nonzero torsion subsheaf.
\end{lemma}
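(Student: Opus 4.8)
The plan is to produce the torsion subsheaf explicitly, as the transform of the $0$-dimensional quotient appearing in the canonical structure sequence of $Z$, after first pinning down the WIT indices of all the sheaves involved.

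First I would settle the $\Psi$-WIT$_1$ assertion. Because $Z$ is $0$-dimensional, tensoring by $I_Z$ leaves $c_1$ unchanged, so $r(I_Z \otimes L)=1$ and $d(I_Z \otimes L)=d(L)$, giving $\mu(I_Z \otimes L)=d(L)<b/a$. A general fibre $\pi^{-1}(s)$ avoids the finitely many points of $Z$, so $(I_Z \otimes L)|_{\pi^{-1}(s)}\cong L|_{\pi^{-1}(s)}$ is a line bundle on a smooth genus-one curve, hence stable. Lemma \ref{lemma-FMTesLemma6-4} therefore gives that $I_Z \otimes L$ is $\Psi$-WIT$_1$. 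The identical argument applied to $L$ itself (which has $\mu(L)=d(L)<b/a$ and restricts to a stable line bundle on the generic fibre) shows $L$ is $\Psi$-WIT$_1$ as well, while the $0$-dimensional sheaf $\OO_Z \otimes L$ is $\Psi$-WIT$_0$ (see Section \ref{section-BMefsum}).

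Next I would transform the structure sequence. Tensoring $0 \to I_Z \to \OO_X \to \OO_Z \to 0$ by the locally free sheaf $L$ yields
\[
0 \to I_Z \otimes L \to L \to \OO_Z \otimes L \to 0
\]
in $\Coh(X)$. Applying $\Psi$ and writing out the long exact cohomology sequence, the vanishing of $\Psi^0(I_Z \otimes L)$, $\Psi^0(L)$ and $\Psi^1(\OO_Z \otimes L)$ recorded above collapses it into the short exact sequence of sheaves
\[
0 \to \widehat{\OO_Z \otimes L} \to \widehat{I_Z \otimes L} \to \hat{L} \to 0.
\]

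Finally I would recognise $\widehat{\OO_Z \otimes L}$ as the required torsion subsheaf. Since $\OO_Z \otimes L$ has $r=d=0$ it lies in $\mathcal{B}_X$, so by the remark following Lemma \ref{lemma17} its transform lies in $\mathcal{B}_Y$ and in particular has rank $0$, hence is a torsion sheaf; it is nonzero because $\Psi$ is an equivalence and $\OO_Z \otimes L \neq 0$ (assuming $Z$ nonempty). The displayed sequence exhibits this nonzero torsion sheaf as a subsheaf of $\widehat{I_Z \otimes L}$, which is exactly the claim. I expect the only genuine subtlety to be the WIT bookkeeping of the previous two paragraphs: it is precisely because all three terms of the structure sequence carry the WIT indices listed that the long exact sequence degenerates into a short exact one in which the torsion part sits as a true subobject rather than merely a subquotient, and once this is in place nothing further requires serious work.
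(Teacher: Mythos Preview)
Your proof is correct and follows essentially the same approach as the paper: both transform the structure sequence $0\to I_Z\otimes L\to L\to \OO_Z\to 0$ and identify $\widehat{\OO_Z}$ as the torsion subsheaf. The only cosmetic differences are that the paper deduces $I_Z\otimes L$ is $\Psi$-WIT$_1$ from being a subsheaf of the WIT$_1$ sheaf $L$ rather than re-invoking Lemma~\ref{lemma-FMTesLemma6-4}, and observes $\widehat{\OO_Z}$ is torsion by noting it is supported on finitely many fibres rather than via the $\mathcal B_X\leftrightarrow\mathcal B_Y$ correspondence.
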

\begin{proof}
In the short exact sequence
\[
 0 \to I_Z \otimes L \to L \to \OO_Z \to 0,
\]
the line bundle $L$ is $\Psi$-WIT$_1$ by Lemma \ref{lemma-FMTesLemma6-4}.  Hence $I_Z \otimes L$ is also $\Psi$-WIT$_1$.  Since $\OO_Z$ is 0-dimensional, it is $\Psi$-WIT$_0$, and $\Psi$ takes the above short exact sequence to the short exact sequence
\[
 0 \to \hat{\OO}_Z \to \widehat{I_Z \otimes L} \to \hat{L} \to 0,
\]
where $\hat{\OO}_Z$ is supported on a finite number of fibres, thereby proving the lemma.
\end{proof}

\begin{remark}
Note that,  consistent with Remark \ref{remark5}, the torsion subsheaf $\hat{\OO}_Z$ of $\widehat{I_Z \otimes L}$ lies in $\mathcal B_Y$.  Also, even though the ideal sheaf $I_Z$ is locally free outside a codimension-3 locus, its homological dimension is exactly two (see \cite[p.146]{OSS}), and so Theorem \ref{theorem-main0} does not apply.
\end{remark}

\section{Application 1: moduli of stable complexes}

Let $\pi : X\to S$ be an elliptic threefold, and $\hat{\pi} : Y \to S$ the Fourier-Mukai partner as in Section \ref{section-notation}.  In Theorem \ref{theorem-main2} in this section, we use the results in Section \ref{section-equiv} to show that there is  an open immersion from a moduli of stable complexes to a moduli space of Gieseker stable sheaves.  This gives us a moduli stack of stable complexes that admits a tame moduli space in the sense of Alper.

\subsection{An open immersion of moduli stacks}

Let us set up the notation: let $\sigma$ be any polynomial stability of type V2 in the sense of \cite{Lo3}, and $\sigma^\ast$ any polynomial stability of type V3 in the sense of \cite{Lo3}.  Let  $\MM^\sigma$ denote the moduli stack of $\sigma$-semistable objects in $D(X)$ of nonzero rank, while $\MM^{\sigma, \sigma^\ast}$ denote the substack of objects in $\MM^\sigma$ that are also $\sigma^\ast$-semistable.

For example, we can choose the stability function $p$ for $\sigma$ as $p(d)=-\lfloor \frac{d}{2}\rfloor$, and choose the stability vector $\rho$ so that $\rho_0, \rho_1, \rho_2, \rho_3$ are as in Figure 1 below (so that $\sigma$ is PT-stability, as in \cite{Lo1, Lo2}):

\begin{figure*}[h]
\centering
\setlength{\unitlength}{0.7mm}
\begin{picture}(170,40) 
\multiput(60,15)(1,0){50}{\line(1,0){0.5}}
\multiput(85,0)(0,1){40}{\line(0,1){0.5}}
\put(85,15){\vector(-4,1){15}}
\put(60,18.6){$-\rho_2$}
\put(85,15){\vector(-1,1){12}}
\put(67.5,28){$\rho_0$}
\put(85,15){\vector(1,2){10}}
\put(90,36){$-\rho_3$}
\put(85,15){\vector(3,1){14}}
\put(99.5,20){$\rho_1$}
\end{picture}

\caption{Configurations of the $\rho_i$ for PT-stability}
\label{figure-PTstab}
\end{figure*}
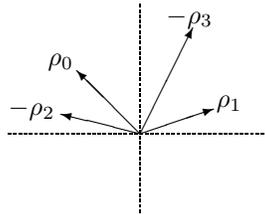
 In particular, every $\sigma$-semistable object $E$ in $D(X)$ is a 2-term complex such that $H^{-1}(E)$ is torsion-free, slope semistable and $H^0(E)$ is  0-dimensional.  
 
 To obtain an example of a polynomial stability of type V3, we can use the same stability function $p$ as above.  As for the stability vector $\rho$, we can simply switch the phases of $-\rho_3$ and $\rho_1$ in Figure \ref{figure-PTstab}; alternatively, we can switch the phases of $-\rho_3$ and $\rho_1$, as well as those of $\rho_0$ and $-\rho_2$ in Figure \ref{figure-PTstab} (see \cite[Section 2]{Lo3}).

For any Noetherian scheme $B$ over the ground field $k$ and any $B$-flat family of complexes $E_B$ on $X$, define the following property for fibres $E_b$ of $E_B$, $b \in B$:
    \begin{itemize}
      \item[(P)] The restriction $(H^{-1}(E_b))|_s$ of the cohomology sheaf $H^{-1}(E_b)$ to the fibre $\pi^{-1}(s)$ is a stable sheaf for a generic point $s \in S$.
    \end{itemize}
By Proposition \ref{pro3} below, property (P) is an open property for flat families of complexes on $X$.  Therefore, we have the following open immersions of moduli stacks:
\[
   \MM^{\sigma, \sigma^\ast, P} \subset \MM^{\sigma, \sigma^\ast} \subset \MM^\sigma.
\]
where $\MM^{\sigma, \sigma^\ast, P}$ denotes the stack of objects in $\MM^{\sigma, \sigma^\ast}$ that also have property (P).

Let $\MM^{\sigma, \sigma^\ast, P}_{\mu < b/a}$ denote the substack of $\MM^{\sigma, \sigma^\ast, P}$ consisting of complexes $E \in D(X)$ such that $\mu (H^{-1}(E)) < b/a$.

\begin{theorem}\label{theorem-main2}
Let $\pi : X \to S$ be as in Corollary \ref{coro1}.
We have an open immersion of moduli stacks
\[
\xymatrix{
\MM^{\sigma, \sigma^\ast, P}_{\mu < b/a}  \arinj[r]^(.6){} & \MM^s
}
\]
induced by the Fourier-Mukai transform $\Psi$, where $\MM^s$ denotes the moduli stack of Gieseker stable torsion-free sheaves on $Y$, with respect to some  polarisation.  Hence $\MM^{\sigma, \sigma^\ast, P}_{\mu < b/a}$ admits a tame moduli space in the sense of Alper.
\end{theorem}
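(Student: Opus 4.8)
The plan is to assemble Theorem \ref{theorem-main2} from the technical machinery of Section \ref{section-equiv} together with the structure theory of the polynomial stabilities $\sigma, \sigma^\ast$. The open immersion is a statement on the level of moduli \emph{stacks}, so I would establish it pointwise first (on objects), then promote it to families, and finally invoke openness of the relevant conditions. The overarching strategy follows the outline in Section \ref{section-BMefsum}: given $E \in \MM^{\sigma,\sigma^\ast,P}_{\mu<b/a}$, I must show that $\Psi E$ is (isomorphic to) a Gieseker stable torsion-free sheaf on $Y$, and that this assignment is functorial, open, and fully faithful.

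\textbf{Pointwise analysis.} First I would fix $E$ in the source stack. By the description of type-V2 stability recalled after Figure \ref{figure-PTstab}, $E$ is a $2$-term complex with $H^{-1}(E)$ torsion-free and slope-semistable, $H^0(E)$ zero-dimensional. Property (P) gives that $H^{-1}(E)$ restricts to a stable sheaf on the generic fibre, and the substack condition gives $\mu(H^{-1}(E)) < b/a$. Thus $H^{-1}(E)$ satisfies property (G): by Lemma \ref{lemma-FMTesLemma6-4} it is $\Psi$-WIT$_1$, and by Remark \ref{remark5} its transform restricts to a stable torsion-free sheaf on the generic fibre of $\hat\pi$. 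As explained in Section \ref{section-BMefsum}, since $H^0(E)$ is $0$-dimensional (hence $\Psi$-WIT$_0$) and $H^{-1}(E)$ is $\Psi$-WIT$_1$, the complex $E$ is $\Psi$-WIT$_0$ and $\hat E$ sits in a short exact sequence
\[
0 \to \widehat{H^{-1}(E)} \to \hat E \to \widehat{H^0(E)} \to 0.
\]
To show $\hat E$ is torsion-free I would verify the vanishing condition of Theorem \ref{pro5} for $F = H^{-1}(E)$, namely $\Ext^1(\mathcal B_X \cap W_{0,X}, H^{-1}(E)) = 0$; the type-V3 condition ($\sigma^\ast$-semistability) is exactly what should force this vanishing, by controlling destabilizing subobjects coming from $\mathcal B_X$. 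Granting this, $\widehat{H^{-1}(E)}$ is torsion-free, and since $\widehat{H^0(E)}$ lies in $\mathcal B_Y$ (being the transform of a $0$-dimensional, hence fibre, sheaf), any torsion of $\hat E$ would lie in $\mathcal B_Y$ and split off a nonzero map contradicting either torsion-freeness of $\widehat{H^{-1}(E)}$ or the stability hypotheses; so $\hat E$ is torsion-free. That $\hat E$ restricts to a stable sheaf on the generic fibre follows from \cite[Lemma 9.5]{BMef}, and then \cite[Lemma 2.1]{BMef} upgrades this to Gieseker stability with respect to a suitable polarisation on $Y$, placing $\hat E \in \MM^s$.

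\textbf{From objects to an open immersion of stacks.} Since $\Psi$ is an equivalence of derived categories, it is fully faithful, so the induced map on the level of objects is injective on isomorphism classes and induces isomorphisms on automorphism groups; I would use this to get a monomorphism of stacks. Because $\Psi$ preserves flatness of WIT families (as recalled in Section \ref{section-BMefsum}, $\Psi$ preserves families of $\Psi$-WIT$_i$ sheaves), the assignment $E \mapsto \hat E$ extends to $B$-flat families, giving a morphism of stacks $\MM^{\sigma,\sigma^\ast,P}_{\mu<b/a} \to \MM^s$. For \emph{openness}, the key input is that the source is an open substack of $\MM^\sigma$: property (P) is open by Proposition \ref{pro3}, the $\sigma^\ast$-semistability cut is open, and the condition $\mu(H^{-1}(E)) < b/a$ is open since $\mu$ is locally constant in flat families. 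One then checks that the image is an open substack of $\MM^s$ and that $\Psi$ identifies the source with it; concretely, I would characterize the image as those Gieseker stable sheaves $G$ on $Y$ whose inverse transform $\Phi G[1]$ is a complex of the prescribed form, and verify this is an open condition on $\MM^s$ using the derived equivalence and semicontinuity. Finally, the last sentence is immediate: $\MM^s$, being a moduli stack of Gieseker stable sheaves, admits a tame moduli space in the sense of Alper \cite{Alper}, and an open substack of a stack with a tame moduli space again admits one (restrict to the open locus), so $\MM^{\sigma,\sigma^\ast,P}_{\mu<b/a}$ inherits a tame moduli space.

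\textbf{Main obstacle.} The technical heart is verifying the vanishing condition \eqref{eq20} for $H^{-1}(E)$ purely from $\sigma^\ast$-semistability, and then proving that the image is genuinely \emph{open} in $\MM^s$ rather than merely locally closed. The former requires translating the type-V3 stability condition into the $\Ext^1$-vanishing via Lemmas \ref{lemma15} and \ref{lemma16} (equivalently, ruling out nonzero maps from $\mathcal B_Y \cap W_{1,Y}$ into the transform), which is where the delicate interplay between the two polynomial stabilities enters. The latter demands a careful openness argument for the inverse-image characterization: I expect this to be the most involved step, since it requires showing that ``$\Phi G[1]$ has torsion-free $H^{-1}$ concentrated in the right slope range with $0$-dimensional $H^0$'' is open in families, which rests on flatness and cohomology-and-base-change control over the derived equivalence together with the boundedness supplied by the moduli stack $\MM^s$.
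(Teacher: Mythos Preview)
Your proposal has a genuine gap at precisely the step you flag as the ``main obstacle'': you cannot extract the vanishing $\Ext^1(\mathcal B_X \cap W_{0,X}, H^{-1}(E)) = 0$ from $\sigma^\ast$-semistability in the way you suggest. The category $\mathcal B_X \cap W_{0,X}$ contains sheaves supported in dimension $2$ (see the remark following Lemma \ref{lemma22}), whereas $\sigma^\ast$-semistability only excludes nonzero maps from $\Coh_{\leq 1}(X)$ to $E$; there is no mechanism by which either polynomial stability controls \emph{extensions} by $2$-dimensional torsion sheaves. The paper's route is entirely different: the combination of $\sigma$- and $\sigma^\ast$-semistability forces $H^{-1}(E)$ to be \emph{reflexive} (via \cite[Lemma 3.2]{Lo4}), and then one invokes Theorem \ref{theorem-main0} --- the substantial technical result whose five-step proof occupies most of Section \ref{section-equiv} and is the reason for the Cohen-Macaulay hypothesis on the fibres --- to obtain the Ext-vanishing and hence (via Corollary \ref{coro1}) the torsion-freeness of $\widehat{H^{-1}(E)}$. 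You have missed the role of reflexivity and of Theorem \ref{theorem-main0} altogether.

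You also misplace the use of $\sigma^\ast$-semistability. In the paper it is deployed \emph{after} $\widehat{H^{-1}(E)}$ is known to be torsion-free, to rule out torsion in $\hat E$ itself: any torsion subsheaf $T \subset \hat E$ injects into $\widehat{H^0(E)}$, hence is $\Phi$-WIT$_1$ with $\hat T$ supported on finitely many fibres (so $\hat T \in \Coh_{\leq 1}(X)$), and then $\Hom_Y(T,\hat E) \cong \Hom_X(\hat T, E) = 0$ by $\sigma^\ast$-semistability. Finally, for the open-immersion step the paper does not characterize the image inside $\MM^s$ as you propose; instead both source and target are realized as open substacks of Lieblich's stack of universally gluable complexes (\cite{Lieblich}, \cite[Appendix]{ABL}), $\Psi$ induces an isomorphism of this ambient stack, and openness of the defining conditions (\cite[Remark 4.4]{Lo3} together with Proposition \ref{pro3}) yields the open immersion directly --- no inverse-image characterization or base-change analysis on $\MM^s$ is needed.
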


\begin{proof}
Take any object $E \in D(X)$ corresponding to a point of $\MM^{\sigma, \sigma^\ast, P}$.  We know  $H^{-1}(E)$ is a reflexive sheaf from \cite[Lemma 3.2]{Lo4}, and that $H^0(E)$ is a 0-dimensional sheaf from \cite[Lemma 3.3]{Lo1}.  Having property (P) implies the restriction $H^{-1}(E)|_s$ of $H^{-1}(E)$ to a generic fibre $\pi^{-1}(s)$ is a stable sheaf.  By Corollary \ref{coro1}, we know $H^{-1}(E)$ is $\Psi$-WIT$_1$ and $\widehat{H^{-1}(E)}$ is a torsion-free sheaf.  The Fourier-Mukai transform $\Psi$ thus takes the canonical exact triangle in $D(X)$
\[
  H^{-1}(E)[1] \to E \to H^0(E) \to H^{-1}(E)[2]
\]
to the short exact sequence of coherent sheaves on $Y$
\[
0 \to \widehat{H^{-1}(E)} \to \hat{E} \to \widehat{H^0(E)} \to 0.
\]
Since $H^0(E)$ is supported at a finite number of points, it follows that $\widehat{H^0(E)}$ is supported on a finite number of fibres by base change \cite[Proposition 6.1]{FMNT}.  We also know that  the restriction of $\widehat{H^{-1}(E)}$ to a generic fibre of $\hat{\pi}$ is stable by \cite[Lemma 9.5]{BMef}.  Hence the restriction of $\hat{E}$ to a generic fibre of $\hat{\pi}$ is also stable.

Suppose $\hat{E}$ is not torsion-free; let $T$ be its maximal torsion subsheaf.  Since $\widehat{H^{-1}(E)}$ is torsion-free, we have an injection $T \hookrightarrow \widehat{H^0(E)}$.  On the other hand, since $H^0(E)$ is $\Psi$-WIT$_0$, $\widehat{H^0(E)}$ is $\Phi$-WIT$_1$; hence its subsheaf $T$ is also $\Phi$-WIT$_1$.  Now, the inclusion $T \subset \hat{E}$ gives us a nonzero element in
\begin{align*}
  \Hom_Y (T,\hat{E}) & \cong \Hom_X (\Phi T, \Phi \hat{E}) \\
  & \cong \Hom_X (\hat{T}[-1],E[-1]) \text{ since $\Phi \Psi \cong \text{id}_X[-1]$} \\
  &\cong \Hom_X(\hat{T},E).
\end{align*}
Since $\widehat{H^0(E)}$ is a sheaf supported on a finite number of fibres, so is $T$, and the same holds for $\hat{T}$ by base change.  Therefore, $\hat{T}$ is a sheaf supported in dimension at most 1.  By the definition of $\sigma^\ast$-stability \cite[Section 2]{Lo4}, however, there can be no nonzero morphisms from objects in $\Coh_{\leq 1}(X)$ to a $\sigma^\ast$-semistable object $E$.  Hence $T$ must be zero, i.e.\ $\hat{E}$ is torsion-free.

The last two paragraphs combined with \cite[Lemma 2.1]{BMef} give that $\hat{E}$ is torsion-free and stable with respect to a suitable polarisation $l$ on $Y$.  Using relative Fourier-Mukai transforms \cite[Section 6.1]{FMNT}, we can define a morphism of stacks $\MM^{\sigma, \sigma^\ast, P}_{\mu < b/a}  \hookrightarrow \MM^s$ induced by $\Psi$, where both $\MM^{\sigma, \sigma^\ast, P}_{\mu < b/a}$ and $\MM^s$ are contained as open substacks of the stack of relatively perfect universally gluable complexes constructed by Lieblich (see \cite{Lieblich} and \cite[Appendix]{ABL}).  That $\sigma$-semistability, $\sigma^\ast$-semistability and property (P) are all open properties for complexes \cite[Remark 4.4]{Lo3}, together with the fact that $\Psi$ is an equivalence, imply that this morphism of stacks is an open immersion.  Since  $\MM^s$ itself admits a tame moduli space \cite[Example 8.7]{Alper}, the open substack $\MM^{\sigma, \sigma^\ast, P}_{\mu < b/a}$ also admits a tame moduli space by \cite[Proposition 7.4]{Alper}.
\end{proof}

Note that $\MM^{\sigma, \sigma^\ast, P}_{\mu < b/a}$ contains as a substack the stack $\MM^{lf,P}_{\mu < b/a}$ of  locally free sheaves $F$ (sitting at degree $-1$) for which the restriction to the generic fibre of $\pi$ is torsion-free and slope semistable with $\mu  < b/a$.

\begin{remark}
The inclusion
\[
\MM^{lf,P}_{\mu < b/a} \subset \MM^{\sigma, \sigma^\ast, P}_{\mu < b/a}
\]
 is strict in general.  To see this, take any reflexive (or even locally free) sheaf $F$ on $X$ such that its restriction to the generic fibre of $\pi$ is stable with $\mu < b/a$.  Then for any short exact sequence of sheaves on $X$ of the form
\[
  0 \to F' \to F \to G\to 0
\]
where $G$ is supported on a hypersurface whose image under $\pi$ is 1-dimensional, $F'$ is still reflexive, but is not necessarily locally free \cite[Corollary 1.5]{SRS}.  If $F'$ is reflexive and non-locally free with relatively prime degree and rank, then we can produce an object $E$ in $\MM^{\sigma, \sigma^\ast}$ with $H^{-1}(E) \cong F'$ with nonzero $H^0(E)$ (so $E$ is not isomorphic to a sheaf) by \cite[Section 4.2]{Lo4}. Then, the restriction of $F'$ to the generic fibre of $\pi$ is again stable with $\mu < b/a$.  That is, $E$ is an object in $\MM^{\sigma, \sigma^\ast, P}_{\mu < b/a}$ but not $\MM^{lf,P}_{\mu < b/a} $.  Besides, from \cite[Section 6]{FMW} and \cite[Section 3.3]{CDFMR}, we know that torsion-free non-reflexive sheaves occur naturally in the construction of stable sheaves on elliptic threefolds.
\end{remark}

\begin{remark}
The conclusion of Theorem \ref{theorem-main2} can be strengthened if we choose a suitable polarisation $l$ on $X$ and an appropriate Chern character $ch$.  More precisely, let $\pi : X \to S$ be as in Theorem \ref{theorem-main2}.  Given a fixed Chern character $ch$ on $X$, suppose $l$ is a polarisation on $X$ satisfying the following property:
 \begin{itemize}
 \item[] For any coherent sheaf $F$ on $X$ with $ch_i(F)=ch_i$ for $i=0,1,2$ that is slope semistable with respect to $l$, the restriction of $F$ to a general fibre of $\pi$ is stable.
 \end{itemize}
Now, suppose $l$ is also the ample class used in the definition of either $\sigma$ or $\sigma^\ast$, where $\sigma, \sigma^\ast$ are as in Theorem \ref{theorem-main2}.  Then $H^{-1}(E)$ is slope semistable with respect to $l$ (by \cite[Lemma 3.3]{Lo1} and \cite[Lemma 3.2]{Lo3}) while $H^0(E)$ is 0-dimensional, and hence $E$ satisfies property (P) - in this case, the open immersion in Theorem \ref{theorem-main2} can be stated more simply as
\[
\xymatrix{
\MM^{\sigma, \sigma^\ast}_{\mu < b/a}  \arinj[r]^(.6){} & \MM^s.
}
\]
\end{remark}

\subsection{Openness of property (P)}

\begin{pro}\label{pro3}
Let $\pi : X \to S$ be an elliptic threefold.  The property (P) is an open property for a flat family of complexes in the category $\langle \Coh_{\leq 1}(X), \Coh_{\geq 3}(X)[1]\rangle$.
\end{pro}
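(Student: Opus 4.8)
The plan is to reduce the statement to two standard facts: that stability of a sheaf on the fibres of a projective morphism is an open condition in a flat family, and that the first projection $\rho : S \times B \to B$ is an open morphism. First I would note that every object $E$ of $\langle \Coh_{\leq 1}(X), \Coh_{\geq 3}(X)[1]\rangle$ has $H^{-1}(E) \in \Coh_{\geq 3}(X)$; on the threefold $X$ this simply means $H^{-1}(E)$ is torsion-free, so its restriction to a generic fibre of $\pi$ is a genuine sheaf on a smooth genus-one curve. Because the locus of $s \in S$ over which a fixed torsion-free sheaf restricts to a stable sheaf on $\pi^{-1}(s)$ is open, property (P) for a single member $E_b$ is equivalent to requiring this open locus to be nonempty, i.e.\ to $H^{-1}(E_b)$ restricting to a stable sheaf over the generic point of the irreducible surface $S$.

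Next I would organise this into a two-parameter problem. Writing $\mathcal G := \mathcal H^{-1}(E_B)$ for the relative $(-1)$st cohomology sheaf on $X \times B$, consider the projective morphism $\pi \times \mathrm{id}_B : X \times B \to S \times B$, whose fibre over $(s,b)$ is the curve $\pi^{-1}(s)$, together with the locus
\[
W := \{\, (s,b) \in S \times B : H^{-1}(E_b)|_{\pi^{-1}(s)} \text{ is stable} \,\}.
\]
By the previous paragraph, property (P) holds for $E_b$ exactly when the fibre $W_b := W \cap (S \times \{b\})$ is nonempty, so the (P)-locus in $B$ is precisely the image $\rho(W)$. Since $S \times B \to B$ is flat and of finite type, it is open; hence it suffices to prove that $W$ is open in $S \times B$, for then $\rho(W)$ is open in $B$.

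Proving that $W$ is open is the crux, and the difficulty is that $\mathcal G$ need be neither flat over $B$ (the sheaf $\mathcal H^0(E_B)$, although fibrewise supported in dimension $\leq 1$, may fail to be flat) nor flat over $S$, so that the fibres $H^{-1}(E_b)|_{\pi^{-1}(s)}$ do not a priori vary in a flat family and their formation need not commute with base change. I would establish openness by showing $W$ is constructible and stable under generization. For constructibility, stratifying $B$ by the Hilbert polynomial of $H^0(E_b)$ (which equals $\mathcal H^0(E_B)\otimes \kappa(b)$, top cohomology commuting with base change) renders $\mathcal H^0(E_B)$ flat on each locally closed stratum, whence $\mathcal H^{-1}(E_B)$ becomes flat over that stratum with formation commuting with base change; applying generic flatness \cite[Proposition 052B]{stacks} in the $S$-direction and cohomology and base change \cite[Theorem III 12.11]{Harts}, one finds a dense open of each stratum over which the $H^{-1}(E_b)|_{\pi^{-1}(s)}$ do form a flat family over the fibres of $\pi \times \mathrm{id}_B$, so that openness of stability in flat families \cite{HL} makes $W$ open there; Noetherian induction then yields constructibility of $W$. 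For stability under generization I would reduce, by the usual valuative argument, to a discrete valuation ring $R$ mapping to $S \times B$ together with a flat family over $R$, and invoke that the stable locus in $\Spec R$ is then open and, containing the closed point, must contain the generic point. A constructible set stable under generization is open, so $W$ is open and the proposition follows. The essential obstacle throughout is the control of the non-flat relative cohomology sheaf $\mathcal H^{-1}(E_B)$ and the management of the two independent base-change directions, over $B$ and over $S$; the remaining ingredients are standard openness and base-change results.
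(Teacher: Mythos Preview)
Your overall architecture---showing a locus is constructible and stable under generization---is the same as the paper's, and your constructibility argument is essentially the paper's Lemma~\ref{lemma6}. The gap is in the generization step, and it is a genuine one.

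You want to show that your two-parameter locus $W\subset S\times B$ is stable under generization by ``reducing to a DVR $R$ mapping to $S\times B$ together with a flat family over $R$''. But you never say what this flat family is, and neither natural candidate works. If you pull back $E_B$ along the induced map $X\times_S\Spec R\to X\times B$, the fibre at the closed point is the \emph{derived} restriction $(E_{b_0})|^L_{X_{s_0}}$, which is generally a genuine two-term complex (whenever $X_{s_0}$ meets $\supp H^0(E_{b_0})$ or the non-locally-free locus of $H^{-1}(E_{b_0})$), not the sheaf $H^{-1}(E_{b_0})|_{X_{s_0}}$ you need. If instead you pull back $\mathcal G=\mathcal H^{-1}(E_B)$, then at the closed point you obtain $\mathcal H^{-1}(E_B)|_{b_0}|_{X_{s_0}}$, and $\mathcal H^{-1}(E_B)|_{b_0}$ need not agree with $H^{-1}(E_{b_0})$ when $\mathcal H^0(E_B)$ is not flat at $b_0$; moreover this pullback is not flat over $R$ in general. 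So you do not have a flat family with the correct fibres, and openness of stability cannot be invoked. The same issue already bites in the simpler assertion of your first paragraph, that the per-$b$ stable locus in $S$ is open: for $s$ where $H^{-1}(E_b)$ is not locally free along $X_s$, the restrictions do not move in a flat family and the Hilbert polynomial can jump.

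The paper resolves this by running generization \emph{only in the $B$-direction}: one takes $B=\Spec R$ with $R$ a DVR and, using the hypotheses $H^0(E_b)\in\Coh_{\leq 1}$ and $H^{-1}(E_b)$ torsion-free, constructs an explicit dense open $U_2\subset S$ (depending on the central fibre) on which $H^0$ restricts to zero and $H^{-1}$ is locally free. For each $s\in U_2$ one then checks that $(E_B)|^L_{X_s}$ \emph{is} a $B$-flat sheaf (shifted) with central fibre the stable sheaf $H^{-1}(E_{b_0})|_{X_s}$, so openness of stability over $R$ applies fibre-by-fibre. This is exactly the step your sketch elides; the missing idea is that one must first pass to an open in $S$ adapted to the special fibre before any flat-family argument becomes available.
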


Let $E_B$ be a $B$-flat family of complexes in $\langle \Coh_{\leq 1}(X), \Coh_{\geq 3}(X)[1]\rangle$, where $B$ is some Noetherian scheme.  To prove Proposition \ref{pro3}, it suffices to  show that the locus
\begin{equation}\label{eq10}
  W := \{ b \in B : E_b \text{ has property (P)} \}
\end{equation}
is a Zariski open set.  This is achieved by showing that $W$ is stable under generisation in Lemma \ref{lemma5}, and that $W$ is a constructible set in Lemma \ref{lemma6}.

\begin{lemma}\label{lemma5}
The set $W$ in \eqref{eq10} is stable under generisation.
\end{lemma}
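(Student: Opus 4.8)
The plan is to show that $W$ is stable under generisation, i.e.\ that if a point $b_0 \in B$ lies in $W$, then so does every generisation $b$ of $b_0$ (a point $b$ with $b_0 \in \overline{\{b\}}$). The strategy I would adopt is to reduce to the local situation at a specialisation, replacing $B$ by the spectrum of a discrete valuation ring $R$ (with the closed point corresponding to $b_0$ and the generic point to $b$); this is the standard device for checking stability under generisation, since it suffices to treat one-parameter specialisations. Thus I would assume $B = \Spec R$ for a DVR $R$, with closed point $0$ and generic point $\eta$, and aim to deduce that property (P) holds at $\eta$ from the fact that it holds at $0$.

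**The main point** is to track the cohomology sheaf $H^{-1}(E_b)$ and its restriction to a generic fibre of $\pi$. The subtlety is that property (P) is about a sheaf on $X_b$ restricted to a \emph{generic} fibre of $\pi$, so I would work over the generic point $s$ of $S$ (or a suitable open dense locus), passing to the generic fibre $X_s \to \Spec k(s)$ and the family of complexes restricted there. The key input is the behaviour of $H^{-1}$ under the flatness hypothesis: since $E_B$ is a flat family in $\langle \Coh_{\leq 1}(X), \Coh_{\geq 3}(X)[1]\rangle$, I expect that $H^{-1}(E_b)$ varies in a controlled way, and in particular that over the DVR the relevant restriction $H^{-1}(E_\eta)|_{X_s}$ can be compared with $H^{-1}(E_0)|_{X_s}$ via specialisation. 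Stability is an open condition on a flat family of sheaves on the smooth projective curve $X_s$ (a genus-one fibre), so if the restriction to the generic fibre is stable at the closed point $0$, it should remain stable at the generic point $\eta$ after possibly shrinking the open locus in $S$.

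**The hard part will be** handling the interaction between the two ``genericities'' at play: the generic point of $S$ (where we restrict to a fibre) and the generic point of $B$ (the parameter we are specialising in). Concretely, property (P) at $b_0$ only guarantees stability on \emph{one} generic fibre, and I must ensure that the open dense locus of good fibres in $S$ can be chosen uniformly, or at least compatibly, as $b$ varies; this requires a constructibility/semicontinuity argument for the family over $S \times B$. I would address this by spreading out over a dense open of $S$, using that the formation of $H^{-1}$ commutes with the flat base change to the fibre (using the flatness of $\pi$ and of $E_B$, together with cohomology and base change as in \cite[Proposition 6.1]{FMNT}), and then invoking openness of stability in families of sheaves on curves to conclude that the stable locus is stable under generisation in the $B$-direction. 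The interplay of these two base-change steps, and verifying that $H^{-1}$ of the restricted family is the restriction of $H^{-1}$ (which uses that $E_B$ lies in the specified heart so that no higher cohomology interferes), is where the genuine care is needed; the rest is a routine application of openness of stability and the valuative criterion for stability under generisation.
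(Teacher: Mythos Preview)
Your plan matches the paper's proof closely: both reduce to $B=\Spec R$ for a DVR, pass to an open dense locus in $S$ where the complex becomes a shifted sheaf, and then invoke openness of stability and local-freeness for the resulting flat family of sheaves on fibres to propagate property~(P) from the closed point to the generic point. The one place the paper is more explicit than your sketch is in isolating the open set $U_1\subset S$ on which $\supp H^0(L\iota^\ast E_B)$ is disjoint from the fibre and $H^{-1}(L\iota^\ast E_B)$ is locally free---this is what makes $(L\iota^\ast E_B)|_s^L$ a genuine (shifted) sheaf and lets semicontinuity push the vanishing of $H^0$ to the generic point of $B$; your phrase ``no higher cohomology interferes'' gestures at this, but the actual mechanism is the dimension bound on $H^0$ coming from the heart $\langle \Coh_{\leq 1}(X),\Coh_{\geq 3}(X)[1]\rangle$.
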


\begin{proof}
To show that $W$ is stable under generisation, we can assume that $B = \Spec R$ is the spectrum of a discrete valuation ring $R$.  Let $\iota : \Spec k \hookrightarrow B$ and $j : \Spec K \hookrightarrow B$ be the closed immersion and open immersion of the closed point and the generic point of $B$, respectively.  Starting with the assumption that $L\iota^\ast E_B$ has property (P), we want to show that $j^\ast E_B$ also has property (P) (since $j^\ast$ is an exact functor, there is no need to derive it).

That $L\iota^\ast E_B$ has property (P) means that $H^{-1}(L\iota^\ast E_B) |_s$ is stable for a generic point $s \in S$.  Define the subset of $S$
\begin{multline*}
  U_1 := \{ s \in S : \text{ supp}(H^0 (L\iota^\ast E_B)) \cap \pi^{-1}(s) = \varnothing, \\
  \, H^{-1}(L\iota^\ast E_B)|_s \text{ is locally free} \}.
\end{multline*}
Since  $H^0 (L\iota^\ast E_B)$ is supported in dimension at most 1 by hypothesis, the locus of $s$ for which $\text{supp}(H^0(L\iota^\ast E_B))$ intersects nontrivially with $\pi^{-1}(s)$ is a closed subset of $S$ of dimension at most 1.  On the other hand, since $H^{-1}(L\iota^\ast E_B)$ is a torsion-free sheaf on $X$, it is locally free outside a 1-dimensional closed subset of $X$.  Hence the complement of $U_1$ is a closed subset of $S$ of dimension at most 1, i.e.\ $U_1$ is an open dense subset of $S$.

Since $E_B$ is a $B$-flat family of 2-term complexes, it is isomorphic to a 2-term complex on $X \times B$.  By the definition of $U_1$, for any $s \in U_1$, the exact triangle
\[
  H^{-1}(L\iota^\ast E_B)[1] \to L\iota^\ast E_B \to H^0(L\iota^\ast E_B) \to H^{-1}(L\iota^\ast E_B)[2] \text{\quad in $D(X_k)$}
\]
restricts to the exact triangle
\[
  H^{-1}(L\iota^\ast E_B)|^L_s [1] \to (L\iota^\ast E_B)|^L_s \to 0 \to  H^{-1}(L\iota^\ast E_B) |^L_s [2] \text{ in $D(X_s)$}.
\]
Hence for any $s \in U_1$, we have $(L\iota^\ast E_B)|^L_s \cong H^{-1}(L\iota^\ast E_B)|^L_s[1]$, which is isomorphic to the (shifted) underived restriction $H^{-1}(L\iota^\ast E_B)|_s [1]$ since $H^{-1}(L\iota^\ast E_B)$ is locally free on an open neighbourhood of $\pi^{-1}(s)$.  Hence
\[
  (L\iota^\ast E_B)|_{\pi^{-1}(U_1)} \cong H^{-1}(L\iota^\ast E_B)|_{\pi^{-1}(U_1)}[1],
\]
where $H^{-1}(L\iota^\ast E_B)|_{\pi^{-1}(U_1)}$ is an $U_1$-flat family of sheaves; in fact, it is a locally free sheaf on $\pi^{-1}(U_1)$.

We further define the subset of $U_1$
\[
  U_2 := \{ s \in U_1 : H^{-1}(L\iota^\ast E_B)|_s \text{ is a stable sheaf}\}.
\]
Since being stable is an open property for a flat family of sheaves, by the last paragraph, $U_2$ is an open subset in $U_1$.

Let us make some observations regarding the fibres of $E_B$ over $U_2$:
\begin{itemize}
\item[(a)] For any $s \in U_2$, we have $(H^0 (L\iota^\ast E_B))|_s=0$.  Since $H^0(L\iota^\ast E_B) \cong \iota^\ast H^0(E_B)$ (this uses the fact that $E_B$ has no cohomology higher than degree 0), we have $0 \cong (\iota^\ast H^0(E_B))|_s \cong \iota^\ast (H^0(E_B)|_s)$.  By semicontinuity, $j^\ast (H^0(E_B)|_s) =0$.  Hence $(j^\ast H^0(E_B))|_s$ vanishes, as does $(j^\ast H^0(E_B))|^L_s$.  From the exact triangle
    \[
    j^\ast H^{-1}(E_B)[1] \to j^\ast E_B \to j^\ast H^0(E_B) \to j^\ast H^{-1}(E_B)[2],
    \]
     we then obtain
     \begin{equation}\label{eq9}
       (j^\ast E_B)|^L_s \cong (j^\ast H^{-1}(E_B)[1])|^L_s \text{\quad for any $s \in U_2$}.
     \end{equation}
\item[(b)] For any $s \in S$, we have $(L\iota^\ast E_B)|^L_s \cong L\iota^\ast (E_B |^L_s)$.  If $s \in U_2$, then from above  we have $(L\iota^\ast E_B)|^L_s \cong H^{-1}(L\iota^\ast E_B)|^L_s[1]$, where $H^{-1}(L\iota^\ast E_B)|^L_s \cong H^{-1}(L\iota^\ast E_B)|_s$ is a stable locally free sheaf.
    As a result, for any $s \in U_2$, we have that $E_B|^L_s$ is a complex on $X_s \times B$ whose restriction to the central fibre over $B$ is a sheaf.  Hence $E_B|^L_s$ itself is a $B$-flat family of  sheaves (sitting at degree $-1$) on $X_s \times B$.  Then, since being stable  and being locally free are both open properties for a flat family of sheaves, $j^\ast (E_B |^L_s) \cong (j^\ast E_B)|^L_s$ is a stable locally free sheaf on $X_s \times \Spec K$.  Since this holds for any $s \in U_2$, we obtain that $(j^\ast E_B)|_{\pi^{-1}(U_2)}$ is an $U_2$-flat family of stable locally free sheaves sitting at degree $-1$.
\end{itemize}
Now, $(H^0 (j^\ast E_B))|_s \cong H^0 ( (j^\ast E_B)|^L_s)$, which is zero when $s \in U_2$ by \eqref{eq9}.  Hence $(H^0(j^\ast E_B))|^L_s \cong 0$ for $s \in U_2$.  Therefore, when we apply the restriction functor $-|^L_s$ (with $s \in U_2$) to the exact triangle
\[
 H^{-1}(j^\ast E_B)[1] \to j^\ast E_B\to H^0 (j^\ast E_B) \to H^{-1}(j^\ast E_B)[2] \text{ in $D(X \times \Spec K)$},
\]
we get $(j^\ast E_B) |^L_s \cong (H^{-1}(j^\ast E_B))|^L_s [1]$.  By observation (b) above, $(j^\ast E_B)|^L_s$ is a stable locally free sheaf at degree $-1$, for any $s \in U_2$.  Hence $H^{-1}(j^\ast E_B)$ is a $U_2$-flat family of stable locally free sheaves.  In other words, $j^\ast E_B$ also has property (P).  This shows that $W$ is stable under generisation.
\end{proof}

\begin{lemma}\label{lemma6}
The set $W$ in \eqref{eq10} is constructible.
\end{lemma}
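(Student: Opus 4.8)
The plan is to encode the fibrewise stability data of the whole family into a single locus on the product $S \times B$, reduce property (P) to a fibre-dimension condition on that locus, and then prove constructibility of the locus by a two-step spreading-out argument in the variables $s \in S$ and $b \in B$. Concretely, define
\[
 T := \{\, (s,b) \in S \times B : (H^{-1}(E_b))|_{\pi^{-1}(s)} \text{ is a stable sheaf on } \pi^{-1}(s) \,\}.
\]
Since $S$ is irreducible, property (P) holds for $E_b$ precisely when the generic point $\eta_S$ of $S$ lies in the fibre $T_b := \{ s \in S : (s,b) \in T\}$, which (once $T$ is known to be constructible) is equivalent to $T_b$ being dense in $S$, i.e.\ to $\dimension T_b = \dimension S$. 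Granting constructibility of $T$, the locus where a constructible set has fibres of maximal dimension is itself constructible by Chevalley's theorem on fibre dimension \cite{Harts}, so $W = \{ b \in B : \dimension T_b = \dimension S\}$ is constructible, as desired.

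The heart of the matter is therefore the constructibility of $T$, and this is the step I expect to be the main obstacle. First I would stratify $B$ by generic flatness: by Noetherian induction on $B$ there is a finite stratification into locally closed subschemes on each of which the coherent sheaf $\mathcal H^{-1}(E_B)$ on $X \times B$ is flat over the stratum and its formation commutes with restriction to points (so that, as in the proof of Lemma \ref{lemma5}, $\mathcal H^{-1}(E_B)|_{X \times \{b\}} = H^{-1}(E_b)$ there). A finite union of constructible sets being constructible, it suffices to treat one stratum, so we may assume $G := \mathcal H^{-1}(E_B)$ is $B$-flat and computes $H^{-1}(E_b)$. Now consider the projective flat morphism $\varpi := \pi \times \mathrm{id}_B : X \times B \to S \times B$; for $r = (s,b)$ the fibre $\varpi^{-1}(r)$ is the curve $\pi^{-1}(s)$ and $G|_{\varpi^{-1}(r)} = H^{-1}(E_b)|_{\pi^{-1}(s)}$, so $T$ is exactly the locus in $S \times B$ over which the fibre of $G$ is stable.

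The difficulty is that $H^{-1}(E_b)$ is only torsion-free, hence $G$ is \emph{not} flat over $S \times B$ (its rank can drop on special fibres of $\pi$), so one cannot directly invoke openness of stability. I would resolve this by a second Noetherian induction, now on the base $S \times B$: on each integral locally closed stratum, generic flatness \cite{stacks} yields a dense open $R' \subseteq S \times B$ over which $G$ is flat, and over $R'$ the stable locus is open by the relative openness of (semi)stability for flat families of sheaves \cite{HL}; the complement of $R'$ is a proper closed subset, handled by the inductive hypothesis. This exhibits $T$ as a finite union of locally closed sets, hence constructible. A secondary technical point to watch is that the fibres $\pi^{-1}(s)$ live over the possibly non-closed residue fields $\kappa(s)$, so ``stable'' must be read over $\kappa(s)$; this is harmless, since at $\eta_S$ it reproduces exactly the notion of stability over $k(S)$ used in (P), and openness of stability is a purely fibrewise condition insensitive to the base field. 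Finally, combining the constructibility of $W$ with Lemma \ref{lemma5} (stability under generisation) shows that $W$ is open, which completes the proof of Proposition \ref{pro3}.
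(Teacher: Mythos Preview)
Your approach is essentially the paper's own: both first stratify $B$ so that the cohomology of $E_B$ commutes with base change, then stratify $S\times B$ (you via Noetherian induction on generic flatness, the paper via a flattening stratification) so that $H^{-1}(E_B)$ becomes flat over $S\times B$, invoke openness of stability on each stratum to see that your $T$ (the paper's $U$) is constructible, and finally use semicontinuity of fibre dimension plus Chevalley to descend to $W$. One small technical point: to guarantee that $\mathcal H^{-1}(E_B)|_{X\times\{b\}}\cong H^{-1}(E_b)$ it is not enough to make $\mathcal H^{-1}(E_B)$ flat over the stratum; the relevant base-change obstruction is $\mathrm{Tor}_1^B(H^0(E_B),k(b))$, so you should simultaneously flatten $H^0(E_B)$ as well (the paper does exactly this), after which your argument goes through verbatim.
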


\begin{proof}
We can assume that $B$ is of finite type over the ground field $k$.  In the proof of this lemma, let us use the following alternative description of $W$:
\begin{equation}
  W = \{ b \in B : \text{ the locus }\{ s \in S : H^{-1}(E_b)|_s \text{ is stable}\} \text{ has dimension 2}\}.
\end{equation}
By using a flattening stratification of $B$ for $H^{-1}(E_B)$ and $H^0(E_B)$, we can assume that the cohomology sheaves $H^{-1}(E_B), H^0(E_B)$ are both flat over $B$.  As a consequence, for any $b \in B$ we have $H^{-1}(E_b) \cong H^{-1}(E_B) |_b$.  And so
\[
  H^{-1}(E_b)|_s \cong H^{-1}(E_B)|_b |_s \cong H^{-1}(E_B) |_s |_b =: H^{-1}(E_B)|_{(s,b)}.
\]

Now, let $S \times B = \coprod_i T_i$ be a flattening stratification of $S \times B$ for $H^{-1}(E_B)$.  Then $H^{-1}(E_B)|_{T_i}$ is flat over $T_i$ for each $i$.


Let $\pi_S, \pi_B$ denote the projections from $S \times B$ to $S$ and $B$, respectively.  Define
\begin{multline*}
 W_i := \{ b \in \pi_B (T_i) : \\
 \{ s \in \pi_S (T_i) : H^{-1}(E_B) |_{T_i} |_{(s,b)} \text{ is stable}\} \text{ has dimension at least 2} \}.
\end{multline*}
It is straightforward to see that $W = \coprod_i W_i$.  Therefore, to show that $W$ is constructible, it is enough to show that each $W_i$ is constructible.  In other words, in order to show that $W$ is constructible, we can assume from now on that $H^{-1}(E_B)$ is flat over the entirety of $S \times B$.

Consider the set
\[
  U := \{ (s,b) \in S \times B : H^{-1}(E_B) |_{(s,b)} \text{ is stable}\}.
\]
Since being stable is an open property for a flat family of sheaves, $U$ is an open subset of $S \times B$.  Then the set
\[
  \tilde{W} := \{ (s,b) \in U : \text{ the fibre of $U$ over $\pi_B (b)$ has dimension at least 2}\}
\]
is a locally closed subset of $S \times B$ by semicontinuity, hence constructible.  Since $W = \pi_B (\tilde{W})$, we see that $W$ itself is also constructible.
\end{proof}

\section{An equivalence of categories}

Throughout this section, let $\pi : X \to S$ be an elliptic threefold satisfying the same assumptions as in Theorem \ref{theorem-main0}, and $\hat{\pi} : Y \to S$ its Fourier-Mukai partner as in Section \ref{section-notation}.

In Theorem \ref{theorem-main2}, we gave an open immersion
 \[
 \MM^{\sigma,\sigma^\ast,P}_{\mu < b/a} \hookrightarrow \MM^s
 \]
 from a moduli of stable complexes $\MM^{\sigma,\sigma^\ast,P}_{\mu < b/a}$ on $X$ to a moduli of stable sheaves $\MM^s$ on $Y$, induced by the functor $\Psi$.  In this section, we extend this open immersion to an isomorphism of stacks.

 Note that, if we want to map a moduli of two-term complexes $\MM$ (where some of the objects are not isomorphic to sheaves) into a moduli of sheaves via $\Psi [1]$, then not all the sheaves in the image $\Psi [1] (\MM)$ can be WIT$_0$.  We have:

 \begin{theorem}\label{theorem-main3}
The functor $\Psi$ induces a bijection between the following two sets:
 \begin{itemize}
 \item[(i)] the set $\mathcal C_X$ of objects $E$ in
 \[
 \langle \mathcal B_X\cap W_{0,X}, \mathcal B_X^\circ \cap W_{1,X} [1] \rangle
  \]
  satisfying
  \[
  \Hom (\mathcal B_X \cap W_{0,X},E)=0,
   \]
   such that $H^{-1}(E)$ has nonzero rank, $\mu (H^{-1}(E)) < b/a$, and $H^{-1}(E)$ restricts to a stable sheaf on the generic fibre of $\pi$;
 \item[(ii)] the set  $\mathcal C_Y$ of torsion-free sheaves $F$ on $Y$ such that $\mu (F) > -c/a$, and $F$ restricts to a stable sheaf on the generic fibre of $\hat{\pi}$, and such that in the unique short exact sequence
 \[
   0 \to A \to F \to B \to 0
 \]
 where $A$ is $\Phi$-WIT$_0$ and $B$ is $\Phi$-WIT$_1$, we have $B \in \mathcal B_Y$.  (Note that, this is equivalent to requiring $B$ to be  a torsion sheaf by Lemma \ref{lemma-E3-WIT1torsion}.)
 \end{itemize}
 Under the above bijection, we have $A = \widehat{H^{-1}(E)}$ and $B = \widehat{H^0(E)}$.
 \end{theorem}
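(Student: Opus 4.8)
The plan is to realize the claimed bijection through the mutually inverse equivalences: send $E \in \mathcal C_X$ to $\hat E = \Psi(E)$ and send $F \in \mathcal C_Y$ to $\Phi(F)[1]$. Since $\Psi \circ \Phi \cong \text{id}_Y[-1]$ and $\Phi \circ \Psi \cong \text{id}_X[-1]$, these two assignments are already mutually inverse on all of $D(X)$ and $D(Y)$, so once I check that they carry $\mathcal C_X$ into $\mathcal C_Y$ and $\mathcal C_Y$ into $\mathcal C_X$ the bijection follows formally. For the forward direction, take $E \in \mathcal C_X$; by definition $H^0(E) \in \mathcal B_X \cap W_{0,X}$ is $\Psi$-WIT$_0$ and $H^{-1}(E) \in \mathcal B_X^\circ \cap W_{1,X}$ is $\Psi$-WIT$_1$. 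Applying $\Psi$ to the canonical triangle $H^{-1}(E)[1] \to E \to H^0(E) \to H^{-1}(E)[2]$ then shows $E$ is $\Psi$-WIT$_0$ and yields the short exact sequence $0 \to \widehat{H^{-1}(E)} \to \hat E \to \widehat{H^0(E)} \to 0$, with $\widehat{H^{-1}(E)} \in W_{0,Y}$ and $\widehat{H^0(E)} \in \mathcal B_Y \cap W_{1,Y}$ (the membership in $\mathcal B_Y$ because transforms preserve the class $\mathcal B$). This already forces $A = \widehat{H^{-1}(E)}$, $B = \widehat{H^0(E)}$ and $B \in \mathcal B_Y$, and by uniqueness of the $(W_{0,Y},W_{1,Y})$-decomposition it is the canonical sequence.

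The two substantive points are that $\widehat{H^{-1}(E)}$ is torsion-free and that $\hat E$ itself is torsion-free. For the first, I note that $H^{-1}(E)$ is genuinely torsion-free: its torsion subsheaf restricts to zero on the generic fibre (the restriction injects into the torsion-free $H^{-1}(E)|_s$), hence lies in $\mathcal B_X$ by Lemma \ref{lemma-BXdesc} and is annihilated by $H^{-1}(E) \in \mathcal B_X^\circ$; combined with $\mu(H^{-1}(E)) < b/a$ and fibrewise stability, Remark \ref{remark5} shows $H^{-1}(E)$ satisfies property (G). Applying $\Hom(A,-)$ to the canonical triangle for $A \in \mathcal B_X \cap W_{0,X}$ gives an injection $\Ext^1(A, H^{-1}(E)) \hookrightarrow \Hom(A,E)$, so the hypothesis $\Hom(\mathcal B_X \cap W_{0,X}, E) = 0$ yields $\Ext^1(\mathcal B_X \cap W_{0,X}, H^{-1}(E)) = 0$, and Theorem \ref{pro5} produces torsion-freeness of $\widehat{H^{-1}(E)}$. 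For the second, the maximal torsion subsheaf $T \subseteq \hat E$ meets $\widehat{H^{-1}(E)}$ trivially, hence injects into $\widehat{H^0(E)} \in \mathcal B_Y \cap W_{1,Y}$; as $\mathcal B_Y$ is Serre and $W_{1,Y}$ is closed under subobjects, $T \in \mathcal B_Y \cap W_{1,Y}$. Translating the hypothesis through $\Psi$, the condition $\Hom(\mathcal B_X \cap W_{0,X}, E) = 0$ is exactly $\Hom(\mathcal B_Y \cap W_{1,Y}, \hat E) = 0$, which forces $T = 0$. The remaining conditions are routine: $\mu(\hat E) = \mu(\widehat{H^{-1}(E)}) > -c/a$ by Lemma \ref{lemma-equiv2}, and $\hat E$ restricts to the stable sheaf $\widehat{H^{-1}(E)}|_s$ on the generic fibre (the $\mathcal B_Y$-part $B$ restricting to zero) by Remark \ref{remark5}. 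Thus $\hat E \in \mathcal C_Y$.

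For the backward direction, given $F \in \mathcal C_Y$ with canonical sequence $0 \to A \to F \to B \to 0$, applying $\Phi$ and shifting gives $E := \Phi(F)[1]$ with $H^{-1}(E) = \hat A$ ($\Psi$-WIT$_1$) and $H^0(E) = \hat B$ ($\Psi$-WIT$_0$, and in $\mathcal B_X$ since $B \in \mathcal B_Y$). Because $A \subseteq F$ is torsion-free we have $\Hom(\mathcal B_Y \cap W_{0,Y}, A) = 0$, so Lemma \ref{lemma15} shows $\hat A$ is torsion-free, placing $\hat A \in \mathcal B_X^\circ \cap W_{1,X}$ and $E$ in the required heart. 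The condition $\Hom(\mathcal B_X \cap W_{0,X}, E) = 0$ again translates to $\Hom(\mathcal B_Y \cap W_{1,Y}, F) = 0$, which holds because such source objects are torsion and $F$ is torsion-free. Finally $r(\hat A) = r(F) > 0$, Lemma \ref{lemma-equiv2} gives $\mu(\hat A) < b/a$ from $\mu(A) = \mu(F) > -c/a$, and $\hat A$ restricts to a stable sheaf on the generic fibre of $\pi$ by the symmetric form of Remark \ref{remark5}, since $A$ agrees with $F$ generically. Hence $E \in \mathcal C_X$, and the two assignments are mutually inverse with $A = \widehat{H^{-1}(E)}$, $B = \widehat{H^0(E)}$.

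I expect the crux to be the torsion-freeness of $\hat E$. Unlike the pure-sheaf situation recalled in Section \ref{section-BMefsum} (Step 2), here torsion in $\hat E$ can originate from two independent sources, namely the transform $\widehat{H^{-1}(E)}$ and the $\mathcal B_Y$-sheaf $\widehat{H^0(E)}$, and the single hypothesis $\Hom(\mathcal B_X \cap W_{0,X}, E) = 0$ must be deployed against both: once through the long exact sequence to obtain the $\Ext^1$-vanishing feeding Theorem \ref{pro5}, and once through the equivalence to obtain the $\Hom$-vanishing on $Y$ that kills the $\mathcal B_Y \cap W_{1,Y}$-torsion. Keeping these two distinct uses of the same hypothesis consistent is the delicate bookkeeping in the argument.
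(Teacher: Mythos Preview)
Your proof is correct and follows essentially the same route as the paper's: the same two-direction check using the canonical short exact sequence, Theorem~\ref{pro5} for torsion-freeness of $\widehat{H^{-1}(E)}$, and the $\Hom$-translation through $\Psi$ to kill the residual torsion in $\hat E$. Your version is in fact a bit more careful in a few spots---you explicitly verify that $H^{-1}(E)$ is torsion-free (the paper leaves this implicit in $\mathcal B_X^\circ$) and you spell out the generic-fibre stability of $\hat E$ and of $\hat A$ in each direction, whereas the paper absorbs these into a terse appeal to Lemma~\ref{lemma-equiv2}; your long-exact-sequence derivation of $\Ext^1(\mathcal B_X\cap W_{0,X},H^{-1}(E))=0$ is equivalent to the paper's observation that $H^{-1}(E)[1]$ is a subobject of $E$ in the tilted heart.
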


 Note that, the category $\langle \mathcal B_X\cap W_{0,X}, \mathcal B_X^\circ \cap W_{1,X} [1] \rangle$ above is just the intersection of the  hearts $\langle \mathcal B_X, \mathcal B_X^\circ [1] \rangle$ and $\langle W_{0,X}, W_{1,X}[1]\rangle$ of two different t-structures on $D(X)$.  Also note that, the definitions of $\mathcal C_X$ and $\mathcal C_Y$ make no mention of any kind of stability.

\begin{proof}
Take an object $E$ in $\mathcal C_X$. Then $\Psi E = \hat{E}$ fits in the short exact sequence in $\Coh (Y)$
\[
 0 \to \widehat{H^{-1}(E)} \to \hat{E} \to \widehat{H^0(E)} \to 0
\]
where $\widehat{H^{-1}(E)}$ is $\Phi$-WIT$_0$ and $\widehat{H^0(E)}$ is $\Phi$-WIT$_1$.  From the definition of $\mathcal C_X$, we have $H^0(E) \in \mathcal B_X \cap W_{0,X}$, and so $\widehat{H^0(E)} \in \mathcal B_Y$.   That $\hat{E}$ has positive rank with $\mu > -c/a$ follows from Lemma \ref{lemma-equiv2}.

Since $H^{-1}(E)[1]$ is a subobject of $E$ in the heart $\langle \mathcal B_X, \mathcal B_X^\circ [1] \rangle$, the condition $\Hom (\mathcal B_X \cap W_{0,X},E)=0$ implies $\Hom (\mathcal B_X \cap W_{0,X}, H^{-1}(E)[1])=0$, i.e.\ $\Ext^1 (\mathcal B_X \cap W_{0,X},H^{-1}(E))=0$.  Thus, by Theorem \ref{pro5}, $\widehat{H^{-1}(E)}$ is torsion-free.

Now, suppose $\hat{E}$ itself is not torsion-free, and $T$ is its maximal torsion subsheaf.  Then $T \hookrightarrow \widehat{H^0(E)}$, and so $T \in \mathcal B_Y \cap W_{1,Y}$.  Thus $\hat{T} \in \mathcal B_X \cap W_{0,X}$.  Then $0=\Hom (\hat{T},E)\cong \Hom (T,\hat{E})$, a contradiction.  Hence $\hat{E}$ is torsion-free.

Conversely, suppose $F$ is a torsion-free sheaf in the  category $\mathcal C_Y$.  That the quasi-inverse $\Phi [1]$ of $\Psi$ takes $F$ into $\langle W_{0,X}, W_{1,X}[1]\rangle$ is clear.  The condition $B \in \mathcal B_Y$ (and knowing $B$ is $\Phi$-WIT$_1$) implies $H^0(\Phi [1] (F))=\hat{B} \in \mathcal B_X$.  On the other hand, that $A$ is $\Phi$-WIT$_0$ and torsion-free implies that $H^{-1} (\Phi [1] (F))=\hat{A}$ is torsion-free, by \cite[Lemma 9.4]{BMef}.  Hence $\Phi [1] (F) \in \langle \mathcal B_X, \mathcal B_X^\circ [1] \rangle$. Now, for any $T \in \mathcal B_X \cap W_{0,X}$, we have $\Hom (T,\Phi [1](F)) \cong \Hom (\hat{T},F)$, which vanishes because $\hat{T}$ is torsion and $F$ is torsion-free.  Lemma \ref{lemma-equiv2}  then completes the proof of the theorem.
\end{proof}

Let us  compare the equivalence in Theorem \ref{theorem-main3} to:
 \begin{itemize}
 \item the isomorphism between a connected component of the moduli of rank-one torsion-free sheaves on $X$ and a connected component of the moduli of stable torsion-free sheaves on $Y$ constructed by Bridgeland-Maciocia in \cite[Theorem 1.4]{BMef}, as well as
 \item the open immersion of moduli stacks in Theorem \ref{theorem-main2}.
 \end{itemize}

In the case of Bridgeland and Maciocia's result, they consider a moduli $\mathcal N$ of rank-one torsion-free sheaves on $Y$, all of which are $\Psi$-WIT$_0$ (after a suitable twist).  In terms of our notation in Theorem \ref{theorem-main3}, the sheaves parametrised by $\mathcal N$  are  exactly the rank-one torsion-free sheaves $F$ in $\mathcal C_Y$ with $B=0$, and they are taken by $\Phi [1]$ to objects in $\mathcal C_X$ with nonzero cohomology only at degree $-1$, which are torsion-free sheaves.

In the case of Theorem \ref{theorem-main2}, all the objects in $\MM^{\sigma, \sigma^\ast, P}_{\mu < b/a}$ lie in $\mathcal C_X$ by Lemma \ref{lemma22} below, and are taken to torsion-free sheaves $F$ in $\mathcal C_Y$ where $B$ is supported on a finite number of fibres of $\hat{\pi}$.

 \begin{lemma}\label{lemma22}
All the complexes corresponding to the closed points of $\MM^{\sigma, \sigma^\ast, P}_{\mu < b/a}$ in Theorem \ref{theorem-main2} lie in the category $\mathcal C_X$.
\end{lemma}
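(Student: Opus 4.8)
The plan is to check, one at a time, each of the defining conditions of the set $\mathcal C_X$ in Theorem \ref{theorem-main3} against the structural properties of objects in $\MM^{\sigma, \sigma^\ast, P}_{\mu < b/a}$ that were already established in the course of proving Theorem \ref{theorem-main2}. Recall from that proof that, for $E$ corresponding to a closed point of $\MM^{\sigma, \sigma^\ast, P}_{\mu < b/a}$, the sheaf $H^{-1}(E)$ is reflexive (in particular torsion-free), $H^0(E)$ is $0$-dimensional, property (P) guarantees that $H^{-1}(E)$ restricts to a stable sheaf on the generic fibre of $\pi$, and the subscript $\mu < b/a$ records that $\mu(H^{-1}(E)) < b/a$. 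Moreover, Theorem \ref{theorem-main2} shows that $E$ is $\Psi$-WIT$_0$ with $\hat{E}$ torsion-free. Verifying $E \in \mathcal C_X$ thus splits into three tasks: membership of $E$ in the heart $\langle \mathcal B_X \cap W_{0,X}, \mathcal B_X^\circ \cap W_{1,X}[1]\rangle$; the rank, slope and fibrewise stability conditions on $H^{-1}(E)$; and the vanishing $\Hom(\mathcal B_X \cap W_{0,X}, E)=0$.

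The first two tasks are routine. For membership in the heart I would show $H^0(E) \in \mathcal B_X \cap W_{0,X}$ and $H^{-1}(E) \in \mathcal B_X^\circ \cap W_{1,X}$. Since $H^0(E)$ is $0$-dimensional it has rank and fibre degree zero, so it lies in $\mathcal B_X$, and $0$-dimensional sheaves are always $\Psi$-WIT$_0$, so it lies in $W_{0,X}$. Since $H^{-1}(E)$ is torsion-free it lies in $\mathcal F_X \subseteq \mathcal B_X^\circ$, and being reflexive with $\mu < b/a$ and stable on the generic fibre it is $\Psi$-WIT$_1$ by Lemma \ref{lemma-FMTesLemma6-4}; hence $H^{-1}(E) \in W_{1,X}$. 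For the conditions on $H^{-1}(E)$: its restriction to the generic fibre is stable by property (P); $\mu(H^{-1}(E)) < b/a$ by definition of the substack; and $H^{-1}(E)$ has positive rank because $\MM^\sigma$ parametrises objects of nonzero rank, while $r(E) = -r(H^{-1}(E))$ as $H^0(E)$ is $0$-dimensional.

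The only point requiring care is the vanishing $\Hom(\mathcal B_X \cap W_{0,X}, E) = 0$, and this is where I expect the main obstacle. The naive approach, invoking $\sigma^\ast$-stability to rule out maps from low-dimensional sheaves, does not apply directly, since on a threefold $\mathcal B_X \cap W_{0,X}$ may contain sheaves supported in dimension $2$ (cf.\ the remark following Lemma \ref{lemma-BXdesc}). I would instead transport the question across the equivalence $\Psi$. For any $T \in \mathcal B_X \cap W_{0,X}$ we have $\Psi T = \hat{T}$ a sheaf with $\hat{T} \in \mathcal B_Y \cap W_{1,Y}$, in particular torsion; and $\Psi E = \hat{E}$ is the torsion-free sheaf produced in Theorem \ref{theorem-main2}. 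Since $\Psi$ is an equivalence,
\[
\Hom(T,E) \cong \Hom_Y(\hat{T}, \hat{E}),
\]
which vanishes because $\hat{T}$ is torsion and $\hat{E}$ is torsion-free. As $T$ was arbitrary, $\Hom(\mathcal B_X \cap W_{0,X}, E)=0$, so all conditions defining $\mathcal C_X$ hold and $E \in \mathcal C_X$. The payoff of routing through $\Psi$ is that it converts a statement about possibly $2$-dimensional sheaves mapping into a complex into the transparent fact that a torsion sheaf admits no map to a torsion-free sheaf.
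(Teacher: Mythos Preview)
Your proof is correct and, at its core, uses exactly the same idea as the paper: the crucial vanishing $\Hom(\mathcal B_X \cap W_{0,X},E)=0$ is obtained by transporting via $\Psi$ and invoking that $\hat{E}$ is torsion-free while $\hat{T}$ is torsion. The only difference is packaging: the paper observes that the proof of Theorem~\ref{theorem-main2} already shows $\Psi E\in\mathcal C_Y$ and then quotes the bijection of Theorem~\ref{theorem-main3} to conclude $E\in\mathcal C_X$, whereas you verify the defining conditions of $\mathcal C_X$ one by one, in effect inlining the converse direction of the proof of Theorem~\ref{theorem-main3}. Your explicit verification has the virtue of making the argument self-contained, while the paper's version is shorter; substantively they are the same (and the paper's remark immediately following the lemma confirms that any proof must pass through the transforms on $Y$, exactly as you do).
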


\begin{proof}
Given any complex $E \in D(X)$ corresponding to a closed point of $\MM^{\sigma, \sigma^\ast, P}_{\mu < b/a}$, we know that $\Psi (E)$ lies in the category $\mathcal C_Y$ in Theorem \ref{theorem-main3}, from the proof of Theorem \ref{theorem-main2}.  Hence by Theorem \ref{theorem-main3}, the complex $E$ lies in the category $\mathcal C_X$.
\end{proof}

\begin{remark}
Some of the sheaves in $\mathcal B_X \cap W_{0,X}$ are supported in dimension 2, while \textit{a priori}  we do not know that $\Hom (\Coh_{=2}(X),E)=0$ for $E \in \MM^{\sigma, \sigma^\ast, P}_{\mu < b/a}$.  Therefore, it is not immediately clear how  Lemma \ref{lemma22} can be shown with a direct proof (i.e.\  without considering the transforms on $Y$).
\end{remark}

\section{Application 2: pure codimension-1 sheaves}\label{section-app2}

In this section, we consider torsion-free sheaves on $X$ that are taken to torsion sheaves supported in codimension 1 by the Fourier-Mukai transform $\Psi  : D(X) \to D(Y)$.  When $X$ and $Y$ are elliptic surfaces, these torsion-free sheaves on $X$ are all locally free; if we further require them to be fiberwise semistable of fibre degree 0, then the corresponding torsion sheaves on $Y$ are all pure sheaves flat over the base $S$ (see Proposition \ref{pro4}).

Some of these results in this section resemble those obtained from the spectral construction of stable sheaves on elliptic fibrations (e.g.\ see \cite{CDFMR, FMW, HRP}), as well as results obtained by Yoshioka, where he assumes the existence of a section for the fibration (see \cite[Theorem 3.15]{Yoshioka}).

\begin{lemma}\label{lemma9}
Let $\pi : X \to S$ be an elliptic surface or threefold.  If $E$ is a torsion-free $\Psi$-WIT$_1$ sheaf on $X$, then $\hat{E}$ has no subsheaves of dimension 0.
\end{lemma}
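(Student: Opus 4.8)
The plan is to argue by contradiction, using that $\Psi$ is an equivalence to transfer a hypothetical $0$-dimensional subsheaf of $\hat{E}$ back to $X$, where the torsion-free hypothesis on $E$ will force it to vanish. Suppose $T \hookrightarrow \hat{E}$ is a nonzero subsheaf with $\dimension \supp (T) = 0$. Since $T$ and $\hat{E}$ are both sheaves, this inclusion is a nonzero element of $\Hom_Y(T, \hat{E})$, and it suffices to show that this group is in fact zero.

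The key step is to rewrite this $\Hom$ group on $X$. Because $E$ is $\Psi$-WIT$_1$ we have $\Psi (E) \cong \hat{E}[-1]$, hence $\hat{E} \cong \Psi (E)[1]$; applying the isomorphism of functors $\Phi \circ \Psi \cong \text{id}_X[-1]$ then gives $\Phi (\hat{E}) \cong E$. By the symmetric version of the observation that $0$-dimensional sheaves are $\Psi$-WIT$_0$, the sheaf $T$ is $\Phi$-WIT$_0$, so $\Phi (T) \cong \hat{T}$ is a genuine sheaf on $X$ concentrated in degree $0$. Since $\Phi$ is fully faithful, I then obtain
\[
  \Hom_Y(T, \hat{E}) \cong \Hom_X(\Phi T, \Phi \hat{E}) \cong \Hom_X(\hat{T}, E).
\]

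It remains to see that $\Hom_X(\hat{T}, E) = 0$. Applying the rank and fibre-degree formula \eqref{eqn-rdunderPhi} to $T$, for which $r(T) = d(T) = 0$, yields $r(\hat{T}) = d(\hat{T}) = 0$; thus $\hat{T} \in \mathcal B_X$ and, having rank $0$ on the integral variety $X$, is a torsion sheaf. Any morphism from a torsion sheaf to the torsion-free sheaf $E$ must vanish, since its image is simultaneously torsion (a quotient of $\hat{T}$) and torsion-free (a subsheaf of $E$). Hence $\Hom_X(\hat{T}, E) = 0$, contradicting the nonzero inclusion $T \hookrightarrow \hat{E}$, so $\hat{E}$ can have no nonzero $0$-dimensional subsheaf. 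This argument is essentially routine once the transfer is set up; the only point needing care is the bookkeeping of shifts ensuring $\Phi (\hat{E}) \cong E$, and the torsion-to-torsion-free vanishing is precisely the mechanism already used in the proof of Theorem \ref{theorem-main2}.
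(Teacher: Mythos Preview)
Your proof is correct and follows essentially the same route as the paper's: both argue by contradiction, use the equivalence $\Phi$ to identify $\Hom_Y(T,\hat{E}) \cong \Hom_X(\hat{T},E)$, and conclude by noting that $\hat{T}$ is torsion while $E$ is torsion-free. The paper's version is terser, leaving implicit the bookkeeping of shifts and the reason $\hat{T}$ is torsion, whereas you spell these out via the rank and fibre-degree formula; but the underlying argument is the same.
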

\begin{proof}
Suppose $E$ is as above, and $\hat{E}$ has a nonzero  subsheaf $T$ of dimension 0.    Then we have a nonzero element in $\Hom_Y (T,\hat{E}) \cong \Hom_X (\hat{T},E)$, which is a contradiction because $E$ is torsion-free, and $\hat{T}$ is torsion.
\end{proof}

In the case of rank-one sheaves, we have a slight improvement of Lemma \ref{lemma-FMTesLemma6-4} with a different proof:

\begin{lemma}\label{lemma8}
Let $\pi : X \to S$ be either an elliptic  surface or threefold. Suppose $E$ is a rank-one torsion-free sheaf on $X$ with $\mu (E)=d(E)\leq b/a$.  Then $E$ is $\Psi$-WIT$_1$.
\end{lemma}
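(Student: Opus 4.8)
The plan is to sidestep the fibrewise-stability input of Lemma~\ref{lemma-FMTesLemma6-4} and argue directly from the rank-one hypothesis, using the torsion pair $(W_{0,X}, W_{1,X})$ together with the \emph{strict} slope inequality of Lemma~\ref{lemma-PsiWIT0posrk}. The key reduction is that $E$ is $\Psi$-WIT$_1$ precisely when its maximal $W_{0,X}$-subsheaf vanishes, since $W_{0,X}$ is the torsion class of the torsion pair $(W_{0,X}, W_{1,X})$ and being $\Psi$-WIT$_1$ is the same as lying in $W_{1,X}$. So I would argue by contradiction: assume $E$ is not $\Psi$-WIT$_1$, and let $0 \neq E_0 \subseteq E$ be its maximal $\Psi$-WIT$_0$ subsheaf.

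The first step is to pin down the rank of $E_0$. Because $E$ is torsion-free, so is its subsheaf $E_0$; hence $E_0$ cannot be a nonzero torsion sheaf, and since $0 < r(E_0) \le r(E) = 1$ we are forced to have $r(E_0) = 1$. This is exactly where the rank-one hypothesis earns its keep: it guarantees that any nonzero $\Psi$-WIT$_0$ subsheaf already has full rank, so the quotient will be torsion.

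The second step applies Lemma~\ref{lemma-PsiWIT0posrk} to $E_0$: being $\Psi$-WIT$_0$ of positive rank, it satisfies $\mu(E_0) > b/a$, and since $r(E_0) = 1$ this reads $d(E_0) > b/a$. Finally, the quotient $E/E_0$ has rank $r(E) - r(E_0) = 0$, so it is a torsion sheaf and therefore has $d(E/E_0) \ge 0$ (every torsion sheaf has non-negative fibre degree). Additivity of the fibre degree then yields
\[
  d(E) = d(E_0) + d(E/E_0) \ge d(E_0) > b/a,
\]
which contradicts the hypothesis $d(E) = \mu(E) \le b/a$. Hence $E_0 = 0$ and $E$ is $\Psi$-WIT$_1$.

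I do not expect a genuine obstacle here; the whole content sits in the \emph{strictness} of the inequality in Lemma~\ref{lemma-PsiWIT0posrk}. That strictness is precisely what allows this argument to reach the borderline case $\mu(E) = b/a$, which the estimate of Lemma~\ref{lemma-FMTesLemma6-4} (valid only for $\mu < b/a$) cannot. The only points demanding care are the opening reduction — identifying "$E$ is $\Psi$-WIT$_1$" with the vanishing of the $W_{0,X}$-part of $E$ — and the remark that the rank-one assumption is essential, since for higher rank a $\Psi$-WIT$_0$ subsheaf need not have full rank and its cokernel need not be torsion, so the final degree inequality would break down.
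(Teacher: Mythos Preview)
Your argument is correct and follows essentially the same route as the paper: both decompose $E$ via the torsion pair $(W_{0,X},W_{1,X})$, observe that a nonzero $\Psi$-WIT$_0$ subsheaf of a rank-one torsion-free sheaf must itself have rank one, and then contradict Lemma~\ref{lemma-PsiWIT0posrk}. The only cosmetic differences are that the paper first passes through the double dual $E^{\ast\ast}$ (a step that is not actually needed for the contradiction) and, to control the fibre degree of the quotient, invokes Lemma~\ref{lemma-E3-WIT1torsion} to get $d(B)=0$ exactly, whereas you use only the weaker fact $d(E/E_0)\ge 0$; your version is thus marginally more economical.
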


\begin{proof}
Consider the canonical exact sequence $0 \to E \to E^{\ast \ast} \to T \to 0$, where $T$ has codimension at least two, and so $d(T)=0$.  If $X$ is a threefold, then $E^{\ast \ast}$ is a rank-one reflexive sheaf, hence locally free, while  if $X$ is a surface, then any reflexive sheaf is locally free; in either case, $E^{\ast\ast}$ is locally free of rank one.  Write $L := E^{\ast \ast}$.  Then $E \cong L \otimes I_Z$, where $I_Z$ is the ideal sheaf of a closed subscheme $Z \subset X$ with codimension at least two.  Also, $d(E)=d(L)$.

Consider the short exact sequence $0 \to A \to E \to B \to 0$ where $A$ is $\Psi$-WIT$_0$ and $B$ is $\Psi$-WIT$_1$.  Suppose $A \neq 0$.  Then $A$ has rank one, so $B$ is a torsion sheaf.  By Lemma \ref{lemma-E3-WIT1torsion}, we have $B \in \mathcal B_X$.  Hence $d(B)=0$, and so $d(A)=d(E)=d(L)\leq b/a$.  Then $A$ is a $\Psi$-WIT$_0$ sheaf with positive rank and $\mu (A)\leq b/a$, contradicting Lemma \ref{lemma-PsiWIT0posrk}.  This implies $A$ must be zero, i.e.\ $L=E^{\ast\ast}$ is $\Psi$-WIT$_1$, and so its subsheaf $E$ is also $\Psi$-WIT$_1$.
\end{proof}
Note that, in the proof above, we do use the rank-one assumption on  $E$ in an essential way (in proving $B$ is torsion).

\begin{lemma}\label{lemma11}
Let $\pi : X \to S$ be an elliptic surface and $E$ a torsion-free sheaf on $X$ satisfying \[
  \Ext^1_{D(X)}(\mathcal B_X \cap W_{0,X}, E)=0.
\]
Then $E$ is a locally free sheaf.
\end{lemma}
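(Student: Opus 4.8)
The plan is to reduce local freeness to a single, well-chosen $\Ext^1$-vanishing, and then to recognise the double-dual cokernel of $E$ as a test object to which the hypothesis directly applies.

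First I would form the canonical short exact sequence
\[
0 \to E \to E^{\ast\ast} \to Q \to 0,
\]
where $Q := E^{\ast\ast}/E$. Since $X$ is a smooth surface, the reflexive sheaf $E^{\ast\ast}$ is locally free, and $E$ agrees with $E^{\ast\ast}$ away from a closed set of codimension at least $2$; hence $Q$ is supported in dimension $0$. In particular $r(Q)=d(Q)=0$, so $Q \in \mathcal B_X$, and since $0$-dimensional sheaves are always $\Psi$-WIT$_0$, we have $Q \in \mathcal B_X \cap W_{0,X}$. Thus the hypothesis applies to $Q$, giving $\Ext^1(Q,E)=0$.

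Next I would apply $\Hom(Q,-)$ to the sequence above to obtain the exact piece
\[
\Hom(Q,E^{\ast\ast}) \to \Hom(Q,Q) \to \Ext^1(Q,E).
\]
Because $Q$ is a torsion sheaf and $E^{\ast\ast}$ is torsion-free, $\Hom(Q,E^{\ast\ast})=0$, so the connecting homomorphism $\Hom(Q,Q)\to\Ext^1(Q,E)$ is injective. If $E$ were not locally free, then $Q\neq 0$, and $\mathrm{id}_Q$ would be a nonzero element of $\Hom(Q,Q)$, forcing $\Ext^1(Q,E)\neq 0$ and contradicting the previous paragraph. Therefore $Q=0$, i.e.\ $E\cong E^{\ast\ast}$ is locally free.

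The argument is short, and the only routine points are standard surface facts: that reflexive sheaves on a smooth surface are locally free, and that the double-dual cokernel of a torsion-free sheaf is $0$-dimensional. The load-bearing observation, which I expect to be the crux rather than a genuine obstacle, is that this cokernel $Q$ itself lies in the test category $\mathcal B_X \cap W_{0,X}$, so that the hypothesis is strong enough to kill $\mathrm{id}_Q$. This is precisely what makes the surface hypothesis essential: on a threefold the double-dual cokernel need not be $0$-dimensional, and the same reasoning would only control the codimension-$2$ behaviour of $E$ rather than yield local freeness. (Equivalently, one could argue via depth and Auslander--Buchsbaum, using that skyscraper sheaves $k(x)$ lie in $\mathcal B_X \cap W_{0,X}$ and that a torsion-free sheaf on a smooth surface is free at $x$ iff $\Ext^1(k(x),E)=0$; but the double-dual argument is cleaner and self-contained.)
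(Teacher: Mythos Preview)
Your proof is correct and follows essentially the same route as the paper: both form the double-dual sequence $0\to E\to E^{\ast\ast}\to T\to 0$, observe that $T$ is $0$-dimensional (hence in $\mathcal B_X\cap W_{0,X}$), apply $\Hom(T,-)$, and use the vanishing of $\Hom(T,E^{\ast\ast})$ together with the hypothesis to force $T=0$. Your additional remarks on why the argument is surface-specific and the alternative via Auslander--Buchsbaum are accurate but go beyond what the paper records.
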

\begin{proof}
Consider the canonical short exact sequence
\[
0 \to E \to E^{\ast\ast} \to T \to 0
\]
where $E^{\ast \ast}$ is reflexive, hence locally free, and $T$ is a 0-dimensional sheaf.  Applying the functor $\Hom (T,-)$ to this short exact sequence, we obtain the exact sequence
\[
0= \Hom (T,E^{\ast \ast}) \to \Hom (T,T) \to \Ext^1 (T,E).
\]
If $T$ is nonzero, then the identity map $1_T$ gives a nonzero element in $\Ext^1 (T,E)$.  However, $T \in \mathcal B_X \cap W_{0,X}$, so $\Ext^1 (T,E)=0$ by assumption.  Hence $T$ must have been zero to start with, i.e.\ $E$ is locally free.
\end{proof}

\begin{coro}\label{coro11}
Let $\pi : X \to S$ be an elliptic surface.  If $F$ is a $\Phi$-WIT$_0$ sheaf on $Y$ with no fibre subsheaves (i.e.\ $F \in \mathcal B_Y^\circ$), then $\hat{F}$ is a locally free sheaf on $X$.
\end{coro}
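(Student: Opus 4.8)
The plan is to reduce this to Lemma \ref{lemma11}, which says that a torsion-free sheaf $E$ on the elliptic surface $X$ satisfying $\Ext^1(\mathcal B_X \cap W_{0,X}, E)=0$ is automatically locally free. Applying this to $E = \hat{F}$ (which is a genuine sheaf, since $F$ is $\Phi$-WIT$_0$), I need to extract from the single hypothesis $F \in \mathcal B_Y^\circ$ exactly two facts: that $\hat{F}$ is torsion-free, and that $\Ext^1(\mathcal B_X \cap W_{0,X}, \hat{F})$ vanishes.

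First I would unwind the hypothesis. By definition $F \in \mathcal B_Y^\circ$ means $\Hom(\mathcal B_Y, F)=0$; since $\mathcal B_Y \cap W_{0,Y}$ and $\mathcal B_Y \cap W_{1,Y}$ are both full subcategories of $\mathcal B_Y$, this immediately yields both $\Hom(\mathcal B_Y \cap W_{0,Y}, F)=0$ and $\Hom(\mathcal B_Y \cap W_{1,Y}, F)=0$.

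The first of these, fed into Lemma \ref{lemma15}, gives that $\hat{F}$ is torsion-free. The second, fed into the isomorphism $\Hom(\mathcal B_Y \cap W_{1,Y}, F) \cong \Ext^1(\mathcal B_X \cap W_{0,X}, \hat{F})$ of Lemma \ref{lemma16}, gives the required vanishing $\Ext^1(\mathcal B_X \cap W_{0,X}, \hat{F})=0$. With both hypotheses of Lemma \ref{lemma11} now in hand, and using that $\pi$ is an elliptic surface, I would conclude that $\hat{F}$ is locally free.

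There is no serious obstacle here: the corollary is essentially the assembly of Lemmas \ref{lemma15}, \ref{lemma16}, and \ref{lemma11}, the only real observation being that the single condition $\Hom(\mathcal B_Y, F)=0$ simultaneously supplies the torsion-freeness input of Lemma \ref{lemma15} and the $\Ext^1$-vanishing input (via Lemma \ref{lemma16}) of Lemma \ref{lemma11}. The one place the surface hypothesis is genuinely needed is Lemma \ref{lemma11} itself, whose conclusion rests on reflexive sheaves on a surface being locally free; on a threefold this final step would fail, and one could only conclude that $\hat{F}$ is reflexive.
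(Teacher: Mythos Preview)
Your proof is correct and essentially the same as the paper's: the paper cites Lemma~\ref{lemma-equiv3} together with Lemma~\ref{lemma11}, and your argument simply unpacks Lemma~\ref{lemma-equiv3} into its two ingredients, Lemmas~\ref{lemma15} and~\ref{lemma16}, before invoking Lemma~\ref{lemma11}. The only difference is one level of citation depth; the logic is identical.
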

\begin{proof}
This follows from Lemma \ref{lemma11} and Lemma \ref{lemma-equiv3}.
\end{proof}

\begin{lemma}\label{lemma14}
Let $\pi : X \to S$ be an elliptic surface.  If $F$ is a pure 1-dimensional sheaf in $\mathcal B_Y^\circ$, then $F$ is $\Phi$-WIT$_0$.
\end{lemma}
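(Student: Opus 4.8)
The plan is to read off the $\Phi$-WIT type of $F$ from the torsion pair $(W_{0,Y}, W_{1,Y})$ in $\Coh (Y)$. Write the canonical short exact sequence
\[
0 \to A \to F \to B \to 0
\]
with $A \in W_{0,Y}$ (the $\Phi$-WIT$_0$ part) and $B \in W_{1,Y}$ (the $\Phi$-WIT$_1$ part). Saying that $F$ is $\Phi$-WIT$_0$ is exactly saying $F \in W_{0,Y}$, i.e.\ that the $\Phi$-WIT$_1$ quotient $B$ vanishes. So the whole proof reduces to proving $B=0$.

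First I would pin down the shape of $B$. As a quotient of the $1$-dimensional sheaf $F$, the sheaf $B$ has dimension at most $1$, hence rank $0$, and so is a torsion sheaf on the surface $Y$; being $\Phi$-WIT$_1$, the analogue of Lemma~\ref{lemma-E3-WIT1torsion} for $Y$ puts $B$ in $\mathcal B_Y$, so by the description of $\mathcal B_Y$ on an elliptic surface (the Remark following Lemma~\ref{lemma-BXdesc}), $B$ is supported on a finite union of fibres of $\hat\pi$. Moreover $W_{1,Y}$ is the torsion-free class of a torsion pair, hence closed under subsheaves, while every $0$-dimensional sheaf on $Y$ is $\Phi$-WIT$_0$; since $W_{0,Y}\cap W_{1,Y}=0$, the sheaf $B$ can have no nonzero $0$-dimensional subsheaf. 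Therefore $B$ is either zero or pure of dimension $1$.

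Next I would exploit purity of $F$ together with the hypothesis $F\in\mathcal B_Y^\circ$. The key claim is that no irreducible component of $\supp (F)$ is contained in a fibre of $\hat\pi$. Indeed, if some component $C$ of $\supp (F)$ lay inside a fibre $Y_s=\hat\pi^{-1}(s)$, then the subsheaf $\Gamma_{Y_s}(F)\subseteq F$ of local sections annihilated by a power of the ideal of $Y_s$ would be supported set-theoretically on the single fibre $Y_s$, hence would be a nonzero object of $\mathcal B_Y$ --- it is nonzero because at the generic point $\eta_C$ of $C$ the stalk $F_{\eta_C}$ is finite length (equivalently $\mathfrak m$-power torsion) over the DVR $\mathcal O_{Y,\eta_C}$, as $F$ is a torsion sheaf, and a power of the ideal of $Y_s$ localises to a power of $\mathfrak m$ there. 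This contradicts $F\in\mathcal B_Y^\circ$. Granting the claim, suppose $B\neq 0$; then $B$ is pure of dimension $1$ with $\supp (B)\subseteq\supp (F)$, and since $B\in\mathcal B_Y$ each component of $\supp (B)$ lies in some fibre. Any $1$-dimensional component of $\supp (B)$ is then an irreducible component of $\supp (F)$ contained in a fibre, contradicting the claim. Hence $B=0$ and $F$ is $\Phi$-WIT$_0$.

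The main obstacle is the support claim in the last paragraph: one must verify that a component of $\supp (F)$ lying in a fibre genuinely produces a nonzero fibre subsheaf of $F$. This is where purity of $F$ (forcing the stalk at the codimension-$1$ point $\eta_C$ to be torsion over $\mathcal O_{Y,\eta_C}$) and the surface hypothesis (so that a $1$-dimensional component of a fibre is a codimension-$1$ subvariety) are both essential. Some care is also needed with possibly reducible or non-reduced fibres; but since we only need set-theoretic containment of $\Gamma_{Y_s}(F)$ in the single fibre $Y_s$, and only nonvanishing of its stalk at $\eta_C$, these pose no real difficulty.
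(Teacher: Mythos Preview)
Your argument is correct and complete. The approach, however, differs from the paper's. The paper invokes an external criterion, namely \cite[Lemma~6.5]{FMTes}: if $F$ is not $\Phi$-WIT$_0$ then there is a nonzero map $F\to\mathcal Q_x$ for some $x\in X$, where $\mathcal Q_x$ is a stable sheaf supported on a single fibre. The image of this map is then a pure $1$-dimensional fibre quotient of $F$, and from there the paper reaches the same geometric endgame as you do: such a quotient forces $\supp(F)$ to contain a fibre as a component, which in turn produces a nonzero fibre subsheaf of $F$, contradicting $F\in\mathcal B_Y^\circ$. You manufacture the problematic fibre quotient differently, by taking the $W_{1,Y}$-part $B$ of $F$ in the torsion pair $(W_{0,Y},W_{1,Y})$ and observing (via Lemma~\ref{lemma-E3-WIT1torsion} and the fact that $0$-dimensional sheaves are $\Phi$-WIT$_0$) that $B$ is either zero or pure $1$-dimensional and supported on fibres. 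Your route is more self-contained, using only the torsion-pair machinery already set up in the paper and avoiding the citation to \cite{FMTes}; the paper's route is shorter on the page because the cited lemma packages the work.

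One minor remark: in your closing paragraph you say purity of $F$ is what forces $F_{\eta_C}$ to be torsion over $\mathcal O_{Y,\eta_C}$, but in fact this follows just from $F$ being a torsion sheaf on the surface (as you correctly state in the body of the argument). Purity of $F$ is not actually used anywhere in your proof; indeed it is automatic once $F\in\mathcal B_Y^\circ$, since $0$-dimensional sheaves lie in $\mathcal B_Y$.
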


\begin{proof}
Suppose $F$ is not $\Phi$-WIT$_0$.  Then, by \cite[Lemma 6.5]{FMTes}, there is a nonzero map $F \overset{\al}{\to} \mathcal Q_x$ for some $x \in X$.  Then $\image (\al)$ is a fibre sheaf, and it must be pure 1-dimensional, since $\mathcal Q_x$ is a stable sheaf supported on the fibre $\hat{\pi}^{-1}(\pi(x))$.  Having a surjection $F \twoheadrightarrow \image \al$ then implies $F$ contains the fibre $\hat{\pi}^{-1}(\pi(x))$ as a component of its support, which in turn implies $F$ has a nonzero fibre subsheaf, contradicting $F \in \mathcal B_Y^\circ$.  Hence $F$ must be $\Phi$-WIT$_0$.
\end{proof}

\begin{remark}\label{remark1}
If $F$ is a 1-dimensional sheaf on $Y$ that is flat over $S$, then the flatness implies the support of $F$ does not contain any fibre of $\hat{\pi}$.  If we also assume that $F$ is a pure sheaf, then $F$ has no 0-dimensional subsheaves.  Therefore, every pure 1-dimensional sheaf on $Y$ that is flat over $S$ lies in $\mathcal B_Y^\circ$, and is $\Phi$-WIT$_0$ by Lemma \ref{lemma14}.
\end{remark}

Given an elliptic fibration $\pi : X \to S$, it is a Weierstrass fibration in the sense of \cite[Definition 6.10]{FMNT} if it further satisfies:
\begin{itemize}
\item all the fibres of $\pi$ are geometrically integral Gorenstein curves of arithmetic genus 1;
\item there exists a section $\sigma : S \to X$ of $\pi$ such that its image $\sigma (S)$ does not contain any singular point of any fibre.
\end{itemize}
Let us call $\pi : X \to S$ a Weierstrass threefold (resp.\ surface) if $\pi$ is an elliptic threefold (resp.\ surface) that is also a Weierstrass fibration.

When $\pi : X \to S$ is a Weierstrass surface with $a=1$ and $b=0$, the Fourier-Mukai partner $\hat{\pi}: Y \to S$ is isomorphic to the Altman-Kleiman compactified relative Jacobian of $\pi$ \cite[Remark 6.33]{FMNT}.

\begin{pro}\label{pro4}
Let $\pi : X \to S$ be an elliptic surface.  The functor $\Psi [1] : D(X) \to D(Y)$ induces an equivalence of categories
\begin{multline*}
\{ \text{$\Psi$-WIT$_1$ torsion-free sheaves $E$ on $X$ satisfying $\mu (E)=b/a$}\} \to \\
\{ \text{$\Phi$-WIT$_0$ pure 1-dimensional, non-fibre sheaves $F$ satisfying } \\
\Hom (\mathcal B_Y\cap W_{0,Y},F)=0 \},
\end{multline*}
which restricts to the equivalence
\begin{multline*}
\{ \text{$\Psi$-WIT$_1$ locally free sheaves $E$ on $X$ satisfying  $\mu (E)=b/a$ and } \\
\Ext^1 (\mathcal B_X\cap W_{0,X},E)=0\} \to \\
\{ \text{pure 1-dimensional sheaves $F$ in $\mathcal B_Y^\circ$}\}.
\end{multline*}
If we further assume that $\pi : X \to S$ is a Weierstrass surface over $\Cfield$ with $a=1,b=0$, then the last equivalence further restricts to the equivalence
\begin{multline*}
  \{ \text{locally free, fiberwise semistable sheaves $E$ of fibre degree 0}\} \to \\
  \{ \text{pure 1-dimensional sheaves $F$, flat over  $S$}\}.
\end{multline*}
\end{pro}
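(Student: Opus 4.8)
The plan is to establish the three equivalences in turn, each as a restriction of the previous one, reducing everything to lemmas already proved together with the classical Fourier-Mukai picture on the elliptic fibres.

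For the first equivalence I would restrict Lemma \ref{lemma-equiv1}. A nonzero torsion-free $\Psi$-WIT$_1$ sheaf $E$ with $\mu(E)=b/a$ has positive rank, so it lies in the source of \eqref{eq19}; hence $\hat E=\Psi[1](E)$ is a $\Phi$-WIT$_0$ torsion sheaf in $\mathcal T_Y\setminus\mathcal B_Y$, i.e.\ a non-fibre sheaf. By Lemma \ref{lemma9} it has no $0$-dimensional subsheaf, and a torsion sheaf on a surface without $0$-dimensional subsheaves is pure of dimension $1$. Applying Lemma \ref{lemma15} to the $\Phi$-WIT$_0$ sheaf $\hat E$, whose transform $E$ is torsion-free, gives $\Hom(\mathcal B_Y\cap W_{0,Y},\hat E)=0$, so $\hat E$ lands in the stated target. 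Conversely, for $F$ in the target, Lemma \ref{lemma15} makes $\hat F$ torsion-free, $\Phi$-WIT$_0$-ness of $F$ makes $\hat F$ a $\Psi$-WIT$_1$ sheaf, and \eqref{eqn-rdunderPhi} with $r(F)=0$, $d(F)>0$ (from non-fibre) forces $\mu(\hat F)=b/a$. Since $\Psi[1]$ and $\Phi$ are mutually inverse, this is an equivalence.

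To obtain the second equivalence I would impose $\Ext^1(\mathcal B_X\cap W_{0,X},E)=0$ on the source. Lemma \ref{lemma11} shows that such a torsion-free sheaf on an elliptic surface is locally free, matching the source; and under $\Psi[1]$, Lemma \ref{lemma16} identifies this $\Ext^1$ with $\Hom(\mathcal B_Y\cap W_{1,Y},\hat E)$. Combined with $\Hom(\mathcal B_Y\cap W_{0,Y},\hat E)=0$ from the first equivalence and the torsion pair $(\mathcal B_Y\cap W_{0,Y},\mathcal B_Y\cap W_{1,Y})$ in $\mathcal B_Y$ (Lemma \ref{lemma17} and the proof of Lemma \ref{lemma-equiv3}), this yields $\Hom(\mathcal B_Y,\hat E)=0$, i.e.\ $\hat E\in\mathcal B_Y^\circ$. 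For the reverse inclusion, a pure $1$-dimensional $F\in\mathcal B_Y^\circ$ is $\Phi$-WIT$_0$ by Lemma \ref{lemma14}, is automatically non-fibre, and satisfies the hypotheses of the first equivalence, so it corresponds to a sheaf of the Part 2 source. For the Weierstrass case with $a=1$, $b=0$ (so $b/a=0$) I would first match the two descriptions of the target: for any coherent sheaf on $Y$ the $\OO_S$-torsion subsheaf equals the largest subsheaf supported on finitely many fibres, i.e.\ the largest subsheaf in $\mathcal B_Y$, so a sheaf lies in $\mathcal B_Y^\circ$ iff it is flat over $S$ (Remark \ref{remark1} is the forward direction, the torsion description the converse). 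The easy inclusion of sources then follows from the second equivalence: given $E$ locally free, $\Psi$-WIT$_1$, $\mu(E)=0$ with the vanishing, $\hat E$ is pure $1$-dimensional and flat over $S$, hence $0$-dimensional on a general fibre, and its inverse transform $E|_s$ is a semistable degree-$0$ sheaf on the elliptic curve $X_s$; thus $E$ is locally free and fibrewise semistable of fibre degree $0$.

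The remaining inclusion — that a locally free, fibrewise semistable $E$ of fibre degree $0$ satisfies the vanishing — is the crux. First, $E$ is $\Psi$-WIT$_1$: in $0\to E_0\to E\to E_1\to 0$ with $E_0\in W_{0,X}$, a nonzero $E_0$ would be torsion-free of positive rank, forcing $\mu(E_0)>0$ by Lemma \ref{lemma-PsiWIT0posrk} and contradicting fibrewise semistability, which bounds the fibre slope of every subsheaf by $\mu(E)=0$; so $E_0=0$. For the vanishing itself I cannot imitate the threefold argument of Theorem \ref{theorem-main0}, because on a surface every object of $\mathcal B_X$ is supported over finitely many points of $S$ and there is no reduction to a general fibre. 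Instead I would argue fibrewise over each relevant $s_0\in S$: after the dévissage of Step 2 of Theorem \ref{theorem-main0} reducing to $A\in\mathcal B_X\cap W_{0,X}$ a sheaf on a reduced Weierstrass fibre $X_{s_0}$, Serre duality on $X$ turns $\Ext^1(A,E)$ into the dual of $H^1(X_{s_0},(E|_{X_{s_0}})^\vee\otimes A)$, using that the dualising sheaf of the Gorenstein genus-$1$ fibre is trivial; since $E|_{X_{s_0}}$ is semistable of slope $0$ while $A$, being fibrewise $\Psi_{s_0}$-WIT$_0$ (as in Lemma \ref{lemma4}), has all Harder--Narasimhan slopes positive, the tensor product has positive slopes and vanishing $H^1$. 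The main obstacle is precisely this fibrewise positivity on the possibly singular Weierstrass fibres: one must make sense of slope-semistability of $E|_{X_{s_0}}$ and of ``positive slopes'' of a $\Psi_{s_0}$-WIT$_0$ sheaf on an integral Gorenstein genus-$1$ curve, and control the transform on singular and non-reduced fibres. Equivalently, this step says $\hat E$ has no vertical $1$-dimensional subsheaf, so $\hat E\in\mathcal B_Y^\circ$, whence the vanishing also follows from Lemma \ref{lemma16}.
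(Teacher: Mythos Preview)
Your treatment of the first two equivalences is essentially identical to the paper's: both invoke Lemma~\ref{lemma-equiv1} and Lemma~\ref{lemma15} for the first, and Lemma~\ref{lemma-equiv3}, Lemma~\ref{lemma11}, and Lemma~\ref{lemma14} for the second.

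For the third equivalence the organisation differs, and your version has a real gap precisely where you flag it. You try to show $\Ext^1(A,E)=0$ by a slope argument on the fibre: after Serre duality on $X$ you reduce to $H^1\bigl(X_{s_0},(E|_{X_{s_0}})^\vee\otimes \bar A\bigr)$ and then want to say that $\bar A$, being $\Psi_{s_0}$-WIT$_0$, has all HN slopes positive, so the tensor product has positive slopes and vanishing $H^1$. As you note, this step is delicate on singular Weierstrass fibres, where Lemma~\ref{lemma4} does not apply and the tensor-product/HN argument is not available. The paper bypasses this entirely: after reaching $H^1\bigl(X_s,(E_s)^\vee\otimes\bar A\otimes(\omega_X|_{X_s})\bigr)$ and using $\omega_X|_{X_s}\cong\OO_{X_s}$, it applies Serre duality \emph{once more on the Gorenstein curve $X_s$} (with $\omega_{X_s}\cong\OO_{X_s}$) to obtain $\Hom_{X_s}(\bar A,E_s)$. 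This vanishes simply because $\bar A$ is $\Psi_s$-WIT$_0$ while $E_s$ is $\Psi_s$-WIT$_1$, with no reference to slopes, semistability, or smoothness of the fibre. Note that two distinct trivialities are used here --- $\omega_X|_{X_s}$ and $\omega_{X_s}$ --- which your sketch conflates.

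There are also secondary differences. Rather than arguing directly that ``pure $1$-dimensional in $\mathcal B_Y^\circ$'' coincides with ``pure $1$-dimensional flat over $S$'' and then matching the two sources, the paper cites \cite[Proposition~6.51, Corollary~6.52]{FMNT}: fibrewise semistability of degree $0$ gives $E_s$ $\Psi_s$-WIT$_1$ for every $s$, hence $E$ is $\Psi$-WIT$_1$ \emph{and} $\hat E$ is flat over $S$; conversely, for $F$ flat over $S$, the same proposition yields fibrewise semistability of $\hat F$ from its being $\Psi$-WIT$_1$. Your torsion-pair argument for $\Psi$-WIT$_1$-ness via Lemma~\ref{lemma-PsiWIT0posrk} is correct, and your identification of the two targets is valid, but the cited results from \cite{FMNT} supply the needed fibrewise WIT index (on every fibre, singular or not) that feeds into the $\Hom_{X_s}(\bar A,E_s)=0$ step above.
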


\begin{proof}
The first equivalence follows immediately from Lemmas \ref{lemma15} and \ref{lemma-equiv1}.

For the second equivalence, given a sheaf $E$ on $X$ with the prescribed properties, that $\hat{E}$ on $Y$ has the desired properties follows from the first equivalence and Lemma \ref{lemma-equiv3}.  Conversely, if $F$ is a pure 1-dimensional sheaf on $Y$ that  is in $\mathcal B_Y^\circ$, then it is $\Phi$-WIT$_0$ by Lemma \ref{lemma14}.  By the first equivalence and Lemma \ref{lemma-equiv3}, we know that $\hat{F}$ is $\Psi$-WIT$_1$, with $\mu=b/a$ and satisfies $\Ext^1 (\mathcal B_X \cap W_{0,X},\hat{F})=0$.  Then Lemma \ref{lemma11} implies that $\hat{F}$ is locally free.  This shows the second equivalence.

For the last equivalence, we assume that $\pi : X \to S$ is a Weierstrass fibration and $a=1,b=0$. Suppose $E$ is a fiberwise semistable locally free sheaf of fibre degree 0.  Then for any fiber $\iota_s : X_s \hookrightarrow X$ of $\pi$, the restriction $E_s := \iota_s^\ast E$  is $\Psi_s$-WIT$_1$ by \cite[Proposition 6.51]{FMNT}.  Then, by \cite[Corollary 6.52]{FMNT}, $E$ itself is $\Psi$-WIT$_1$, and $\hat{E}$ is flat over $S$.

Now, we claim that $\Ext^1 (\mathcal B_X \cap W_{0,X},E)=0$.  To this end, we need to show $\Ext^1 (A,E)=0$ for any $A \in \mathcal B_X \cap W_{0,X}$, where it suffices to assume that $A$ is supported on a single fibre of $\pi$.  Take any such $A$, and suppose $ A = {\iota_s}_\ast \bar{A}$ for some sheaf $\bar{A}$ on the fibre $\iota_s : X_s \hookrightarrow X$.  Since all the fibres of $\pi$ are of dimension 1, while the kernel $\mathcal Q$ of the Fourier-Mukai transform $\Psi$ is a sheaf \cite[Lemma 5.1]{FMTes}, we have the base change
\begin{equation}\label{eq22}
  (\Psi^1 (A))_s \cong \Psi_s^1 (A|_s)
\end{equation}
by \cite[Corollary 6.3]{FMNT}.

Furthermore, since $\mathcal Q$ is flat over $X$ \cite[Lemma 5.1]{FMTes}, the kernel of the induced Fourier-Mukai transform $\Psi_s$ is a sheaf (i.e.\ $\mathcal Q_s$).  Then, because all the fibres of $\hat{\pi}$ are 1-dimensional, we have  $\Psi^i_s (A|_s)=0$ for $i>1$.   Since $A$ is assumed to be $\Psi$-WIT$_0$, we also have $\Psi_s^1 (A|_s)=0$ from \eqref{eq22}.  Hence $A|_s$ is $\Psi_s$-WIT$_0$. Now,
\begin{align}
  \Ext^1 (A,E) &\cong \Ext^1 (E,A \otimes \omega_X) \text{ by Serre duality} \notag\\
  &\cong H^1 (X,E^\ast \otimes A \otimes \omega_X)  \text{ since $E$ is locally free} \notag\\
  &\cong H^1 (X_s, (E_s)^\ast \otimes \bar{A} \otimes (\omega_X |_{X_s} ) )  \notag\\
  &\cong \Hom_{X_s} (\bar{A} \otimes (\omega_X |_{X_s}), E_s \otimes \omega_{X_s}) \text{ by Serre duality on $X_s$}. \label{eq21}
\end{align}
Since $\pi$ has a section, there are no multiple fibres of $\pi$, and so the formula for $\omega_X$ (see, for instance, \cite[(6.28)]{FMNT}) gives us $\omega_X |_{X_s}=\OO_{X_s}$ for any $s \in S$.  On the other hand, over $\Cfield$, all the fibres of a smooth elliptic surface have trivial dualising complexes \cite{genus1}, so $\omega_{X_s} = \OO_{X_s}$.  Then, since $\bar{A}$ is $\Psi_s$-WIT$_0$ and $E_s$ is $\Psi_s$-WIT$_1$, the Hom space \eqref{eq21} vanishes. Hence $\Ext^1 (\mathcal B_X \cap W_{0,X},E)=0$, and by the second equivalence, we get that $\hat{E}$ is pure 1-dimensional.

For the converse, suppose $F$ is a pure 1-dimensional sheaf on $Y$ that is flat over the base $S$.  By Remark \ref{remark1}, we know $F$ lies in $\mathcal B_Y^\circ$.  Using the second equivalence above, the only thing left to show is that $\hat{F}$ is fiberwise semistable.  Since we know $\hat{F}$ is $\Psi$-WIT$_1$ from the second equivalence, \cite[Proposition 6.51]{FMNT} implies that $\hat{F}$ is indeed fiberwise semistable.
\end{proof}

\begin{lemma}\label{lemma18}
Suppose $\pi : X \to S$ is an elliptic threefold or  surface.  Suppose $F$ is a pure codimension-1 sheaf on $Y$ that is flat over $S$.  Then $\hat{\pi}$ restricts to a finite morphism $\hat{\pi} : \text{supp}(F) \to S$, and $F \in \mathcal B_Y^\circ$. Furthermore, if $d(F)=1$, then $\text{supp}(F)$ is a section of $\hat{\pi} : Y \to S$, and $F$ is a line bundle on $\text{supp}(F)$.
\end{lemma}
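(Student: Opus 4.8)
The plan is to prove the three assertions in order, each feeding into the next, and to note that the argument is insensitive to whether $\pi$ is a surface or a threefold, since the only geometric input is that the fibres of $\hat{\pi}$ are $1$-dimensional (so $\dim Y = \dim S + 1$).

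First I would establish that $\hat{\pi}$ restricts to a finite morphism on $\text{supp}(F)$. As $F$ is of codimension $1$, its support has dimension $\dim Y - 1 = \dim S$. Because $F$ is flat over $S$, the Hilbert polynomial of the fibre $F_s := F|_{\hat{\pi}^{-1}(s)}$ is independent of $s$, so I would compute it at the generic point $\eta \in S$. If $F_\eta$ were $1$-dimensional it would be supported on all of the generic fibre $Y_\eta$, forcing $\dim \text{supp}(F) \geq \dim S + 1$, a contradiction; hence $F_\eta$ is $0$-dimensional and its Hilbert polynomial is a constant. By flatness $F_s$ is then $0$-dimensional for \emph{every} $s \in S$, so $\text{supp}(F) \to S$ has finite fibres. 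Since $\text{supp}(F)$ is closed in $Y$ and hence proper over $S$, a proper quasi-finite morphism is finite, which gives the first claim.

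Next, to show $F \in \mathcal B_Y^\circ$, I would exploit this finiteness. Take any nonzero $B \in \mathcal B_Y$ and a map $\phi : B \to F$; its image is a quotient of $B$, so it lies in the Serre subcategory $\mathcal B_Y$, and by Lemma \ref{lemma-BXdesc} it restricts to zero on the generic fibre. Thus $\hat{\pi}(\text{supp}(\image\phi))$ is contained in a proper closed $T \subsetneq S$, whence $\text{supp}(\image\phi) \subseteq \text{supp}(F) \cap \hat{\pi}^{-1}(T)$; since $\text{supp}(F) \to S$ is finite this intersection has dimension $\leq \dim T < \dim S$. But $\image\phi$ is a subsheaf of the pure $(\dim S)$-dimensional sheaf $F$, so it cannot have strictly smaller-dimensional support unless it is zero. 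Hence $\phi = 0$, giving $\Hom(\mathcal B_Y, F) = 0$. Finally, for the case $d(F) = 1$: the fibre degree is the length of $F_s$ on the generic (hence, by flatness, on every) fibre, so each $F_s$ has length $1$. I would push forward along the finite morphism $\hat{\pi}|_{\text{supp}(F)}$; since the fibres are $0$-dimensional of constant length $1$, cohomology and base change shows $\hat{\pi}_* F$ is a line bundle on $S$. Writing $D$ for the scheme-theoretic support of $F$, so that $F$ is a faithful $\OO_D$-module, and setting $A := \hat{\pi}_* \OO_D$ and $M := \hat{\pi}_* F$, the $\OO_S$-algebra $A$ acts faithfully on the rank-one locally free module $M$, yielding an injection $A \hookrightarrow \HHom_{\OO_S}(M,M) \cong \OO_S$ that splits the structure map $\OO_S \to A$; therefore $A \cong \OO_S$, i.e.\ $\hat{\pi}|_D : D \to S$ is an isomorphism and $D = \text{supp}(F)$ is a section of $\hat{\pi}$. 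Then $F$ is a rank-one locally free module over $\OO_D \cong \OO_S$, i.e.\ a line bundle on its support.

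The main obstacle I anticipate is this last step: pinning down that the scheme-theoretic support is literally a section, rather than merely a finite degree-one cover. The faithful-module argument showing $\hat{\pi}_* \OO_D \cong \OO_S$ is the crux, and care is needed to verify that $F$ is genuinely faithful over its scheme-theoretic support and that $\HHom_{\OO_S}$ of a line bundle collapses to $\OO_S$, so that injectivity together with the splitting forces an isomorphism of $\OO_S$-algebras.
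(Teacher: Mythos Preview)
Your argument is correct. The first two parts---finiteness of $\hat{\pi}|_{\text{supp}(F)}$ via quasi-finiteness plus properness, and membership in $\mathcal B_Y^\circ$ via the dimension bound coming from finiteness combined with purity---match the paper's proof almost verbatim; the only cosmetic difference is that you phrase the quasi-finiteness in terms of Hilbert-polynomial constancy, while the paper argues directly that a fibre contained in $\text{supp}(F)$ would force $F$ to have positive rank.

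Where your proof genuinely diverges is the $d(F)=1$ step. The paper argues that the scheme-theoretic support $\text{supp}(F)$ meets every fibre in a single reduced point, invokes \cite[Theorem III 9.9]{Harts} to conclude $\text{supp}(F)$ is flat over $S$, and then reads off that $\hat{\pi}_\ast \OO_{\text{supp}(F)}$ is locally free of rank one, making the structure map an isomorphism. You instead work with $F$ itself (already flat), push it down to a line bundle $M$ on $S$, and use the faithful action of $A=\hat{\pi}_\ast\OO_D$ on $M$ to obtain an injective $\OO_S$-algebra map $A\hookrightarrow\HHom_{\OO_S}(M,M)\cong\OO_S$ that is split by the structure map, forcing $A\cong\OO_S$. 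Your route has the advantage of never needing to establish flatness of the support separately: it extracts the conclusion directly from the already-given flatness of $F$, and the ``section'' statement then follows from the equivalence between finite $S$-schemes and coherent $\OO_S$-algebras. The paper's route is slightly more geometric but relies on the passage from length-one fibres of $F$ to multiplicity-one fibres of $\text{supp}(F)$, which is the point you correctly flag as the delicate one; your faithful-module argument sidesteps it cleanly.
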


\begin{proof}
Let $F$ be a pure codimension-1 sheaf that is flat over $S$.  Suppose $\text{supp}(F)$ contains a fibre $\hat{\pi}^{-1}(s)$ of $\hat{\pi}$.  Then the restriction $F|_s$ would be a sheaf of nonzero rank on $\hat{\pi}^{-1}(s)$, and by flatness, $F$ would be a sheaf of nonzero rank on $Y$, a contradiction.  Therefore, $\text{supp}(F) \cap \hat{\pi}^{-1}(s)$ is a finite set of points for any $s \in S$.  That is, the restriction $\hat{\pi} : \text{supp}(F) \to S$ is quasi-finite.  Since the closed immersion $\text{supp}(F) \hookrightarrow Y$ is projective and $\hat{\pi}$ itself is projective, the restriction $\hat{\pi} : \text{supp}(F) \to S$ is projective; since any projective, quasi-finite morphism is finite,  the restriction of $\hat{\pi}$ to $\text{supp}(F)$ is a finite morphism as claimed.

Now, suppose we can find a nonzero subsheaf $A \subset F$ such that $A \in \mathcal B_Y$.  Since the restriction  $\hat{\pi} : \text{supp}(F) \to S$ is quasi-finite, for any $s \in S$, the intersection $\text{supp}(A) \cap \hat{\pi}^{-1}(s)$ is a finite set of points.  On the other hand, that $A \in \mathcal B_Y$ implies $\hat{\pi} (\text{supp}(A))$ has codimension at least 1; this, along with the last sentence, implies $A$ has codimension at least 2 in $Y$.  Since $F$ is a pure sheaf, $A$ must be zero.  Hence $F \in \mathcal B_Y^\circ$, proving the first part of the lemma.

For the second part, assume $d(F)=1$.  Then for each $s \in S$, the fibre of $\hat{\pi}$ over $s$ intersects $\text{supp}(F)$ at one point with multiplicity 1.  Hence $\text{supp}(F)$ is flat over $S$ by \cite[Theorem III 9.9]{Harts}. Thus, locally, the morphism $\OO_S \to \hat{\pi}_\ast \OO_{\text{supp}(F)}$ makes $\hat{\pi}_\ast \OO_{\text{supp}(F)}$ a free module of rank 1 over $\OO_S$, implying $\OO_S \to \hat{\pi}_\ast \OO_{\text{supp}(F)}$ is surjective, hence an isomorphism.  Therefore, we obtain a section $\sigma : S \overset{\thicksim}{\to} \text{supp}(F)$.  Since $F$ is flat  over $S$, we see that  $F$ is a line bundle on $\text{supp}(F)$.
\end{proof}
When $d=1$, Lemma \ref{lemma18} also follows directly from \cite[Proposition 4.2]{HvdB}.

The last equivalence of Proposition \ref{pro4}, together with Lemma \ref{lemma18} and \eqref{eqn-rdunderPsi}, gives:

\begin{coro}\label{coro2}
Suppose $\pi : X \to S$ is a Weierstrass surface over $\Cfield$ and $a=1,b=0$.  Then the functor $\Psi [1]$ induces a bijection of sets
\begin{multline*}
  \{ \text{line bundles $E$ of fibre degree 0}\} \to \\
  \{ \text{$\sigma_\ast L$ :  \text{$\sigma$ is a section of $\hat{\pi}$, and $L \in \Pic (S)$}}\}.
\end{multline*}
\end{coro}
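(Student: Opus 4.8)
The plan is to read off this bijection by restricting the last equivalence of Proposition \ref{pro4} to the rank-one objects on the $X$-side, and then using Lemma \ref{lemma18} to recognise the objects these correspond to on the $Y$-side. Since $a=1$ and $b=0$, we have $b/a=0$, so a line bundle $E$ on $X$ of fibre degree $0$ satisfies $\mu(E)=0=b/a$; being of rank one it is automatically fibrewise semistable and, by Lemma \ref{lemma8}, it is $\Psi$-WIT$_1$. Thus the line bundles of fibre degree $0$ are precisely the rank-one members of the source collection $\{\text{locally free, fibrewise semistable sheaves of fibre degree }0\}$ of that equivalence, and Proposition \ref{pro4} already sends them, via $\Psi[1]$ (which here is just $E\mapsto\hat{E}$, as $E$ is $\Psi$-WIT$_1$), into the pure $1$-dimensional sheaves on $Y$ that are flat over $S$.

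First I would pin down the numerical type of the transform. With $a=1$ and $b=0$, the determinant condition on $\begin{pmatrix} c & a \\ d & b\end{pmatrix}$ forces its lower-left entry to equal $-1$, so for a line bundle $E$ with $r(E)=1$, $d(E)=0$, formula \eqref{eqn-rdunderPsi} gives $r(\Psi E)=0$ and $d(\Psi E)=-1$. Since $\Psi E\cong\hat{E}[-1]$, this yields $r(\hat{E})=0$ and $d(\hat{E})=1$; that is, $\hat{E}$ is a pure $1$-dimensional sheaf, flat over $S$, with fibre degree $1$. Conversely, \eqref{eqn-rdunderPhi} shows that the condition $d(F)=1$ is exactly the one producing rank one on $X$: if $F$ is pure $1$-dimensional, flat over $S$, with $d(F)=1$, then $r(\hat{F})=c\cdot 0 + a\cdot 1 = 1$ and $d(\hat F)= d\cdot 0 + b\cdot 1 = 0$, so its transform is a line bundle of fibre degree $0$. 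Hence the rank-one locus on the left matches the $d(F)=1$ locus on the right under the equivalence.

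Next I would invoke Lemma \ref{lemma18} to make the target explicit. That lemma shows a pure codimension-$1$ sheaf $F$ on $Y$, flat over $S$, with $d(F)=1$ has support equal to a section $\sigma(S)$ of $\hat{\pi}$ and is a line bundle on that section; hence $F\cong\sigma_\ast L$ for some $L\in\Pic(S)$. The reverse inclusion is routine: for any section $\sigma$ of $\hat{\pi}$ and any $L\in\Pic(S)$, the sheaf $\sigma_\ast L$ is supported on the integral curve $\sigma(S)$, hence pure $1$-dimensional; it is flat over $S$ because $\hat{\pi}$ restricts to an isomorphism $\sigma(S)\to S$, and it meets a general fibre of $\hat\pi$ in one reduced point, so $d(\sigma_\ast L)=1$. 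Therefore the subcollection of the target of Proposition \ref{pro4} cut out by $d(F)=1$ is exactly $\{\sigma_\ast L : \sigma\text{ a section of }\hat{\pi},\ L\in\Pic(S)\}$, and restricting the equivalence to the matched subcollections yields the stated bijection.

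I do not expect a serious obstacle: the argument is essentially bookkeeping on top of results already in place. The single point demanding care is the matching of the two restrictions, namely that ``rank one'' on $X$ corresponds under $\Psi[1]$ precisely to ``$d(F)=1$'' on $Y$; this is settled by the linear transformation laws \eqref{eqn-rdunderPsi} and \eqref{eqn-rdunderPhi}, which carry the rank-one locus bijectively onto the $d=1$ locus. Once that correspondence is recorded, Lemma \ref{lemma18} identifies the right-hand collection with the pushforwards $\sigma_\ast L$, and the proof is complete.
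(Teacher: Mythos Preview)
Your proposal is correct and follows exactly the route indicated in the paper: restrict the last equivalence of Proposition~\ref{pro4} to rank one, use the transformation law~\eqref{eqn-rdunderPsi} (and its inverse~\eqref{eqn-rdunderPhi}) to see that rank one on $X$ corresponds to $d(F)=1$ on $Y$, and then apply Lemma~\ref{lemma18} to identify the $d=1$ locus with the pushforwards $\sigma_\ast L$. The paper merely cites these three ingredients without spelling out the bookkeeping, so your write-up is a faithful expansion of the same argument (the appeal to Lemma~\ref{lemma8} is harmless but redundant, since the WIT$_1$ property is already contained in Proposition~\ref{pro4}).
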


There are numerous results in existing literature that are similar to Corollary \ref{coro2}: see, for instance, \cite[Corollary 6.65]{FMNT}, \cite[Theorem 2.1]{HRP} and \cite[Theorem 3.15, Remark 3.6]{Yoshioka}.

\end{document}